\documentclass[11pt]{article}

\usepackage{amsmath, amsfonts, amsthm, amssymb}
\usepackage{authblk, color, bm, graphicx}
\usepackage[margin = 1.5cm, font = small, labelfont = bf, labelsep = period]{caption}
\usepackage{xspace}
\usepackage[utf8]{inputenc}
\usepackage{multirow}
\usepackage[round]{natbib}
\usepackage{float}
\usepackage{bbm}
\usepackage{setspace}
\usepackage{url}

\usepackage{subfig}

\usepackage[algoruled]{algorithm2e}
\SetKwRepeat{Do}{do}{while}

\usepackage{xcolor}
\usepackage[framemethod=TikZ]{mdframed}

\usepackage{fullpage}[2cm]
\newtheorem{thm}{Theorem}
\newtheorem{prop}{Proposition}
\newtheorem{lem}{Lemma}
\newtheorem{obs}{Observation}

\newtheorem{rem}{Remark}
\newtheorem{assm}{Assumption}

\title{Efficient Algorithms for Robust Markov Decision Processes \\ with $s$-Rectangular Ambiguity Sets}

\author[1]{Chin Pang Ho}
\author[2]{Marek Petrik}
\author[3]{Wolfram Wiesemann}

\affil[1]{\small \textit{Department of Data Science, City University of Hong Kong,} \texttt{clint.ho@cityu.edu.hk}}
\affil[2]{\small \textit{Department of Computer Science, University of New Hampshire,} \texttt{mpetrik@cs.unh.edu}}
\affil[3]{\small \textit{Imperial College Business School, Imperial College London,} \texttt{ww@imperial.ac.uk}}

\begin{document}
	
	\maketitle
	
	\begin{abstract}
		Robust Markov decision processes (MDPs) have attracted significant interest due to their ability to protect MDPs from poor out-of-sample performance in the presence of ambiguity. In contrast to classical MDPs, which account for stochasticity by modeling the dynamics through a stochastic process with a known transition kernel, a robust MDP additionally accounts for ambiguity by optimizing against the most adverse transition kernel from an ambiguity set constructed via historical data. In this paper, we develop a unified solution framework for a broad class of robust MDPs with $s$-rectangular ambiguity sets, where the most adverse transition probabilities are considered independently for each state. Using our algorithms, we show that $s$-rectangular robust MDPs with $1$- and $2$-norm as well as $\phi$-divergence ambiguity sets can be solved several orders of magnitude faster than with state-of-the-art commercial solvers, and often only a logarithmic factor slower than classical MDPs. We demonstrate the favorable scaling properties of our algorithms on a range of synthetically generated as well as standard benchmark instances. \\
		
		\noindent \textbf{Keywords:} Markov decision processes; robust optimization; $s$-rectangularity.
	\end{abstract}
	
	\newpage
	
	\section{Introduction}

	We study robust Markov decision processes (MDPs) with a finite state space $\mathcal{S} = \{ 1, \ldots, S \}$, a finite action space $\mathcal{A} = \{ 1, \ldots, A \}$ and a discrete but infinite planning horizon $t = 0, 1, \ldots, \infty$. All of our results immediately extend to robust MDPs with a finite time horizon. Without loss of generality, we assume that every action $a \in \mathcal{A}$ is admissible in every state $s \in \mathcal{S}$. The MDP starts in a random initial state $\tilde{s}_0$ that follows a known probability distribution $\bm{p}^0$ from the probability simplex $\Delta_S$ in $\mathbb{R}^S$. If action $a \in \mathcal{A}$ is taken in state $s \in \mathcal{S}$, then the MDP transitions randomly to the next state according to the conditional probability distribution $\bm{p}_{sa} \in \Delta_S$, where $p_{sas'}$ describes the probability of each possible follow-up state $s' \in \mathcal{S}$. We collect these transition probabilities into the tensor $\bm{p} \in (\Delta_S)^{S \times A}$, and we assume that the transition probabilities are only known to reside in a non-empty, compact ambiguity set $\mathcal{P} \subseteq (\Delta_S)^{S \times A}$. Imprecise knowledge of the transition probabilities is common in practice, where the unknown true dynamics of a system are approximated by simplified theoretical models or estimated from limited empirical observations. For a transition from state $s \in \mathcal{S}$ to state $s' \in \mathcal{S}$ under action $a \in \mathcal{A}$, the decision maker receives an expected reward of $r_{sas'} \in \mathbb{R}_+$. As with the transition probabilities, we collect these rewards into the tensor $\bm{r} \in \mathbb{R}_+^{S \times A \times S}$. Without loss of generality, we assume that all rewards are non-negative.

	We denote by $\Pi = (\Delta_A)^S$ the set of all stationary (\emph{i.e.}, time- and history-independent) but randomized policies. A policy $\bm{\pi} \in \Pi$ takes action $a \in \mathcal{A}$ in state $s \in \mathcal{S}$ with probability $\pi_{sa}$. The transition probabilities $\bm{p} \in \mathcal{P}$ and the policy $\bm{\pi} \in \Pi$ induce a stochastic process $\{ (\tilde{s}_t, \tilde{a}_t) \}_{t = 0}^\infty$ on the space $(\mathcal{S} \times \mathcal{A})^\infty$ of sample paths. We refer to expectations with respect to this process as $\mathbb{E}^{\bm{p}, \bm{\pi}}$. We assume that the decision maker is risk neutral but ambiguity averse. As such, they wish to maximize the worst-case expected total reward under a discount factor $\lambda \in (0, 1)$,
	\begin{equation}\label{eq:robust_mdp}
	\max_{\bm{\pi} \in \Pi} \; \min_{\bm{p} \in \mathcal{P}} \; \mathbb{E}^{\bm{p}, \bm{\pi}} \left[ \sum_{t = 0}^\infty \lambda^t \cdot r (\tilde{s}_t, \tilde{a}_t, \tilde{s}_{t+1}) \; \Big| \; s_0 \sim \bm{p}^0 \right],
	\end{equation}
	where we use the notation $r (s, a, s') = r_{sas'}$ for better readability. Note that the maximum and minimum in this problem are both attained since $\Pi$ and $\mathcal{P}$ are non-empty and compact, while the objective function is finite for every $\bm{\pi} \in \Pi$ and $\bm{p} \in \mathcal{P}$ since $\lambda < 1$.

    While robust MDPs have been studied since the seventies \citep{SL73:uncertain_mdps}, they have witnessed renewed interest in recent years due to their adoption in applications ranging from assortment optimization \citep{RT12:rmdp_ass_opt}, medical decision-making \citep{ZSD17:rmdp_medical_decisions, GBZSM18:medical_innovs} and hospital operations management \citep{GrandClementChanGoyalEscobar2023}, production planning \citep{XG18:rmdp_inventory_control} and energy systems \citep{HJG18:rmdp_energy} to interdicting nuclear weapons development projects \citep{GKW15:rmdp_interdiction} and the robustification against approximation errors in aggregated MDP models \citep{PS14:rmdp_raam}.
    
    For general ambiguity sets $\mathcal{P}$, evaluating the inner minimization in~\eqref{eq:robust_mdp} is NP-hard even if the policy $\bm{\pi} \in \Pi$ is fixed, and all policies that maximize the worst-case expected total reward may be randomized and history-dependent \citep{WKR13:rmdps}. For these reasons, much of the theoretical research on robust MDPs as well as their applications have focused on $(s, a)$-rectangular ambiguity sets of the form
	\begin{equation}\label{eq:sa_rect}
	    \mathcal{P} = \left\{ \bm{p} \in (\Delta_S)^{S \times A} \, : \, \bm{p}_{sa} \in \mathcal{P}_{sa} \;\; \forall (s, a) \in \mathcal{S} \times \mathcal{A} \right\}, \quad \text{where} \quad \mathcal{P}_{sa} \subseteq \Delta_S, \, (s, a) \in \mathcal{S} \times \mathcal{A}.
	\end{equation}
	Intuitively speaking, $(s, a)$-rectangular robust MDPs hedge against an adversary that can choose the worst transition probabilities $\bm{p}_{sa}$ for each state $s$ and action $a$ separately, irrespective of the transition probabilities $\bm{p}_{s' a'}$ chosen for other state-action pairs $(s', a')$. For $(s, a)$-rectangular ambiguity sets $\mathcal{P}$, there are always optimal policies that are deterministic and stationary \citep{I05:rdp, NG05:rdp}. Moreover, Bellman's optimality principle remains satisfied, which implies that the optimal policy can be determined by a robust variant of the classical value iteration.
	 
	More recently, it has been observed that many of the favorable theoretical properties of $(s, a)$-rectangular ambiguity sets extend to the broader class of $s$-rectangular ambiguity sets $\mathcal{P}$ satisfying
	\begin{equation}\label{eq:s_rect}
        \mathcal{P} = \left\{ \bm{p} \in (\Delta_S)^{S \times A} \, : \, \bm{p}_s \in \mathcal{P}_s \;\; \forall s \in \mathcal{S} \right\}, \quad \text{where} \quad \mathcal{P}_s \subseteq (\Delta_S)^A, \, s \in \mathcal{S},
	\end{equation}
	see \cite{LT07:rmdp_thesis}, \cite{XM12:dr_mdps}, \cite{WKR13:rmdps} and \cite{YX16:dro_mdp}. In contrast to~\eqref{eq:sa_rect}, $s$-rectangular ambiguity sets no longer allow the adversary to freely choose the transition probabilities $\bm{p}_{s1}, \ldots, \bm{p}_{sA}$ that correspond to different actions $a$ applied in the same state $s$. This restricts the conservatism of the resulting robust MDP~\eqref{eq:robust_mdp} and typically leads to a better performance of the optimal policy, for example if the ambiguity set is estimated from data. Although Bellman's optimality principle extends to $s$-rectangular robust MDPs and there is always an optimal stationary policy, all optimal policies of an $s$-rectangular robust MDP may be randomized.\footnote{The necessity to randomize among discrete decisions in the presence of ambiguity has also been recognized in the wider distributionally robust optimization literature, see \cite{DS18:randomized} and \cite{DKW19:dicesions}.} While some studies have explored specific tractable formulations beyond classical rectangularity \citep{GGC18:beyond_rect, LKS25:nonrect}, \cite{GCSW24:tractable_rmdps} recently established that $(s,a)$- and $s$-rectangularity are essentially the only uncertainty structures that admit a general dynamic programming decomposition without further restrictions on the reward structure.

    In this paper, we show that for a broad class of robust MDPs with $s$-rectangular ambiguity sets, the robust Bellman operator, which underlies all efficient solution schemes for robust MDPs (such as robust value and policy iteration), despite requiring the solution of seemingly unstructured min-max problems, can be reduced to the solution of a small number of highly structured projection problems. We use this insight to develop tailored solution schemes for the projection problems corresponding to several popular ambiguity sets, which in turn give rise to efficient solution methods for the respective robust MDPs. We analyze the complexity of our algorithms, and we provide an empirical comparison with state-of-the-art commercial solvers as well as a previously proposed tailored solution scheme. Our contributions may be summarized as follows.
 	\begin{enumerate}
 		\item For a broad class of $s$-rectangular ambiguity sets, we relate the robust Bellman operator to the solution of much simpler projection problems, which preserve the structure of the ambiguity set and, therefore, are well suited for the development of tailored solution schemes.
 		\item We offer a complexity analysis that shows how the exact and approximate solution of the aforementioned projection problems translates into solutions of the robust Bellman operator.
 		\item We develop exact solution schemes for two norm-based ambiguity sets and approximate solution schemes for two $\phi$-divergence ambiguity sets, respectively, and we show that our algorithms compare favorably with a state-of-the-art commercial solver as well as an existing special-purpose solution scheme.
	\end{enumerate}

    An abridged version of this work has appeared as a conference paper \citep{HPW22:phi_divergence_mdps}. While the conference paper focused on robust MDPs with \(\phi\)-divergence ambiguity sets, the present work formulates and analyzes a unified algorithmic framework for robust MDPs with $s$-rectangular ambiguity sets of the form \(\sum_{a} d_a(p_{sa},\bar p_{sa}) \le \kappa\), under which a broad class of divergence- and norm-based models can be treated in a common way. Within this framework, we derive new exact solution schemes for robust MDPs with general weighted \(1\)- and \(2\)-norm ambiguity sets, and we show how these models subsume other divergence-based ambiguity sets, such as the \(\chi^2\)-distance, considered previously. The resulting algorithms—specifically, a geometric breakpoint search method for the weighted \(1\)-norm and a root-finding procedure for a system of nonlinear equations for the weighted \(2\)-norm—are structurally distinct from the solution approaches developed for standard \(\phi\)-divergences. We further provide a refined complexity analysis that characterizes how exact and approximate solutions of the underlying projection subproblems translate into accuracy and runtime guarantees for the robust Bellman iteration. The numerical study is expanded accordingly to cover a broader range of ambiguity sets, benchmark algorithms, and problem instances. Finally, the journal version includes an expanded discussion of the related literature, along with a number of technical generalizations and clarifications that improve the overall exposition.
 
    Efficient implementations of the robust Bellman operator have been first proposed by \cite{I05:rdp} and \cite{NG05:rdp} in the context of robust MDPs with $(s,a)$-rectangular ambiguity sets. The authors study ambiguity sets that bound the distance of the transition probabilities to some nominal distribution in terms of finite scenarios, interval matrix bounds, ellipsoids, the relative entropy, the Kullback-Leibler divergence and maximum a posteriori models. Subsequently, similar methods have been developed by \cite{XYW13:rmdp_radios} for interval matrix bounds as well as likelihood uncertainty models, by \cite{PS14:rmdp_raam} for $1$-norm ambiguity sets as well as by \cite{ZSD17:rmdp_medical_decisions} for interval matrix bounds intersected with a budget constraint. All of these contributions have in common that they focus on $(s,a)$-rectangular ambiguity sets where the existence of optimal deterministic policies is guaranteed, and it is not clear how they could be extended to $s$-rectangular ambiguity sets where all optimal policies may be randomized.
 	
    \cite{WKR13:rmdps} compute the robust Bellman operator of an $s$-rectangular robust MDP as a linear or conic optimization problem using commercial off-the-shelf solvers, which renders their solution scheme suitable primarily for small problem instances. More efficient tailored solution methods for $s$-rectangular robust MDPs have subsequently been developed by \cite{HPW18:fast_bellman}, \cite{BPH21:infty_norm} and \cite{HPW21:ppi}. \cite{HPW18:fast_bellman} develop a homotopy continuation method for robust MDPs with $(s,a)$-rectangular and $s$-rectangular weighted $1$-norm ambiguity sets, while \cite{BPH21:infty_norm} adapt the algorithm of \cite{HPW18:fast_bellman} to unweighted $\infty$-norm ambiguity sets. In contrast to our solution framework, the methods of \cite{HPW18:fast_bellman} and \cite{BPH21:infty_norm} exploit specific properties of $1$- and $\infty$-norm ambiguity sets (in particular the fact that the robust Bellman operator amounts to a linear program whose basic feasible solutions can be enumerated), and they do not appear to extend to other ambiguity sets. \cite{HPW21:ppi} embed the algorithms of \cite{HPW18:fast_bellman} in a partial policy iteration, which generalizes the robust modified policy iteration proposed by \cite{KS13:rob_mod_pi} for $(s,a)$-rectangular robust MDPs to $s$-rectangular robust MDPs. While the present paper focuses on the robust value iteration for ease of exposition, we note that the algorithms presented here can also be combined with partial policy iteration to obtain further speedups. \cite{DGM21:twice} establish an equivalence between $s$-rectangular robust MDPs and twice regularized MDPs, which they subsequently use to propose efficient Bellman updates for a modified policy iteration. While their approach can solve robust MDPs in almost the same time as classical non-robust MDPs, the obtained policies can be conservative as the worst-case transition probabilities are only restricted to belong to a $q$-norm ball and may neither be non-negative nor add up to $1$. Also, the size of the ambiguity set has to be sufficiently small (in particular, inversely proportional to $\sqrt{S}$). \cite{GCK21:first_order_RO}, finally, propose a first-order framework for robust MDPs with $s$-rectangular unweighted $2$-norm and Kullback-Leibler ambiguity sets that interleaves primal-dual first-order updates with approximate value iteration steps. The authors show that their algorithms outperform a robust value iteration that solves the emerging subproblems using state-of-the-art commercial solvers. We show that our solution method for weighted $1$-norm ambiguity sets achieves a time complexity that is one order of magnitude better than the method proposed by \cite{HPW18:fast_bellman}.

    Other than solving the robust Bellman equation, policy gradient methods have been proposed in recent years. \cite{WZ22:robust_policy_gradient} develop a policy gradient method for solving $(s,a)$-rectangular robust MDPs with $R$-contamination uncertainty sets. \cite{LLZ22:robust_policy_gradient} propose a policy mirror descent for $(s,a)$-rectangular robust MDPs that relies on an oracle to compute the robust $Q$ function. For $s$-rectangular robust MDPs, \cite{KDGLM23:robust_policy_gradient}  
    propose a policy gradient method that solves the relaxation of norm-constrained robust MDPs, and \cite{WHP22:robust_policy_gradient} and \cite{LKS25:nonrect} consider the exact model under generic ambiguity sets with higher computational complexity.

    While this paper exclusively studies $s$-rectangular uncertainty sets, we emphasize that alternative generalizations of $(s,a)$-rectangular ambiguity sets have been proposed. For example, \cite{MMX16:k_rect} consider $k$-rectangular ambiguity sets where the transition probabilities of different states can be coupled, \cite{GGC18:beyond_rect} study factor model ambiguity sets where the transition probabilities depend on a small number of underlying factors, and \cite{TCPZ18:policy_conditioned} construct ambiguity sets that bound marginal moments of state-action features defined over entire MDP trajectories. We also note the papers of \cite{XM12:dr_mdps}, \cite{CYH19:dr_mdps} and \cite{GCK21:first_order_Wasserstein}, which study the related problem of \emph{distributionally} robust MDPs whose transition probabilities are themselves regarded as random objects that are drawn from distributions that are only partially known. The first convex optimization formulation for robust MDPs over $(s,a)$-rectangular and $s$-rectangular ambiguity sets has been recently proposed by \cite{GCP23:convex_formulation}. While this result is remarkable in that it opens up the possibility to eventually prove polynomial-time solvability of robust MDPs, it is not currently computationally efficient due to the reliance on exponentially large coefficients. The connections between robust MDPs and multi-stage stochastic programs as well as distributionally robust problems are explored further by \cite{R10:risk_averse} and \cite{S16:rect,S21:rect}.

 	The remainder of this paper proceeds as follows. Section~\ref{sec:fundamentals} relates the robust Bellman operator for our ambiguity sets to a projection subproblem, and it shows how exact and approximate solutions to the latter translate into solutions of the former.  We then present exact solution schemes for the projection subproblems of $1$- and $2$-norm ambiguity sets in Sections~\ref{sec:1-norm} and~\ref{sec:2-norm}, respectively, and we present approximate solution schemes for the projection subproblems of two popular $\phi$-divergence ambiguity sets in Section~\ref{sec:phi_div}. We conclude with numerical experiments in Section~\ref{sec:numericals}. For ease of exposition, some auxiliary proofs and numerical results are relegated to the appendix.
 	 	
 	\textbf{Notation.} $\;$ We denote by $\mathbf{e}$ the vector of all ones, whose dimension will be clear from the context. We refer to the probability simplex in $\mathbb{R}^n$ by $\Delta_n = \{ \bm{p} \in \mathbb{R}^n_+ \, : \, \mathbf{e}^\top \bm{p} = 1 \}$. For a vector $\bm{x} \in \mathbb{R}^n$, we denote by $\text{diag} (\bm{x})$ the matrix with $\bm{x}$ on the diagonal and zeros elsewhere, we let $\min \{ \bm{x} \} = \min \{ x_i \, : \, i = 1, \ldots, n \}$ and similar for the maximum operator, and we define $[ \bm{x} ]_+ \in \mathbb{R}^n_+$ component-wise as $([ \bm{x} ]_+)_i = \max \{ x_i, \, 0 \}$, $i = 1, \ldots, n$. Finally, we refer to the conjugate of a real-valued function $f : \mathbb{R}^n \mapsto \mathbb{R}$ by $f^\star (\bm{y}) = \sup \{ \bm{y}^\top \bm{x} - f (\bm{x}) \, : \, \bm{x} \in \mathbb{R}^n \}$.
 	 	
	\section{Bellman Updates for $s$-Rectangular Robust MDPs}\label{sec:fundamentals}
	
	Throughout this paper, we study robust MDPs with $s$-rectangular ambiguity sets of the form
	\begin{equation}\label{eq:additive_s_rect}
	\mathcal{P}_s = \left\{ \bm{p}_s \in (\Delta_S)^A \, : \, \sum_{a \in \mathcal{A}} d_a (\bm{p}_{sa}, \overline{\bm{p}}_{sa}) \leq \kappa \right\},
	\end{equation}
	where $d_a : \Delta_S \times \Delta_S \mapsto \mathbb{R}$ and $\kappa \in \mathbb{R}$. Contrary to the generic $s$-rectangular ambiguity set~\eqref{eq:s_rect}, we assume in~\eqref{eq:additive_s_rect} that the state-wise ambiguity sets $\mathcal{P}_s$ impose a budget $\kappa$ on the deviations of the action-wise transition probabilities $\bm{p}_{sa}$ from some nominal transition probabilities $\overline{\bm{p}}_{sa} \in \Delta_S$.
	
	We impose the following regularity assumptions on the ambiguity set~\eqref{eq:additive_s_rect}:
	\begin{assm}[Regularity Assumptions on~\eqref{eq:additive_s_rect}]
	~
	\begin{itemize}
		\item[\emph{(\textbf{C})}] The deviation functions $d_a$ are convex in their first argument for fixed second argument.
		\item[\emph{(\textbf{D})}] The deviation functions $d_a$ are non-negative and satisfy $d_a (\bm{p}, \bm{p}') = 0$ if and only if $\bm{p} = \bm{p}'$.
		\item[\emph{(\textbf{K})}] The budget $\kappa$ is strictly positive.
	\end{itemize}
	\end{assm}
	These assumptions are needed both for the tractability of our solution scheme and the analysis of how approximation errors propagate within our algorithm. We emphasize that each of these assumptions is mild and usually satisfied in applications. To simplify the exposition of our complexity results, we also assume that the computation of the deviation functions $d_a$ takes time $\mathcal{O} (1)$ if one of the contributing distributions is a Dirac distribution and time $\mathcal{O} (S)$ otherwise. This is typically the case, and the assumption is satisfied for all ambiguity sets studied in this paper.
	
	
	Our interest in ambiguity sets of the form~\eqref{eq:additive_s_rect} is motivated by the following two examples.
	
	\noindent \textbf{Norm-based ambiguity sets.} $\;$
	Consider the scaled, axis-parallel $q$-norm ball
	\begin{equation}\label{eq:norm_based_amb_set}
	\mathcal{P}_s = \left\{ \bm{p}_s \in (\Delta_S)^A \, : \, \lVert \text{diag} (\bm{\sigma}_s) (\bm{p}_s - \overline{\bm{p}}_s) \rVert_q \leq \rho \right\},
	\end{equation}
	where $\bm{\sigma}_s = (\bm{\sigma}_{s1}, \ldots, \bm{\sigma}_{sA}) \in \mathbb{R}_+^{AS}$, $\overline{\bm{p}}_s \in (\Delta_S)^A$, $q, \rho \in \mathbb{R}$ with $q \geq 1$ and $ \rho > 0$. Of particular interest are the cases $q = 1$, which corresponds to the weighted $1$-norm ambiguity set, and $q = 2$, which corresponds to the weighted $2$-norm ambiguity sets. The norm-based ambiguity set~\eqref{eq:norm_based_amb_set} can be brought into the form~\eqref{eq:additive_s_rect} if we set $d_a (\bm{p}_{sa}, \overline{\bm{p}}_{sa}) = \lVert \text{diag} (\bm{\sigma}_{sa}) (\bm{p}_{sa} - \overline{\bm{p}}_{sa}) \rVert_q^q$ and $\kappa = \rho^q$. We will study $1$-norm and $2$-norm ambiguity sets in Sections~\ref{sec:1-norm} and~\ref{sec:2-norm}, respectively.
	
\noindent \textbf{$\phi$-Divergence ambiguity sets.} $\;$
Consider the $\phi$-divergence ambiguity set
\begin{equation}\label{eq:phi_div_amb_set}
	\mathcal{P}_s = \left\{ \bm{p}_s \in (\Delta_S)^A \, : \, D_\phi (\bm{p}_s \, \| \, \overline{\bm{p}}_s) \leq \rho \right\}
	\quad \text{with} \quad
	D_\phi (\bm{p}_s \, \| \, \overline{\bm{p}}_s) = \sum_{a \in \mathcal{A}} \sum_{s' \in \mathcal{S}} \overline{p}_{sas'} \cdot \phi \left( \frac{p_{sas'}}{\overline{p}_{sas'}} \right),
\end{equation}
where $\phi : \mathbb{R}_+ \mapsto \mathbb{R}_+$ is convex and satisfies $\phi (t) = 0$ if and only if $t=1$. Popular choices include the Kullback-Leibler divergence $\phi (t) = t \log t - t + 1$ and the Burg entropy $\phi (t) = - \log t + t - 1$. A $\phi$-divergence ambiguity set is of the form~\eqref{eq:additive_s_rect} if we set $d_a (\bm{p}_{sa}, \overline{\bm{p}}_{sa}) = \sum_{s' \in \mathcal{S}} \overline{p}_{sas'} \cdot \phi (p_{sas'} / \overline{p}_{sas'})$ and $\kappa = \rho$. We will study $\phi$-divergence ambiguity sets in Section~\ref{sec:phi_div}.
	
We note that there are other classes of $s$-rectangular ambiguity sets~\eqref{eq:s_rect} that \emph{cannot} be expressed in the form~\eqref{eq:additive_s_rect}. This holds true, for example, for ellipsoidal ambiguity sets that are not additively separable across actions, as well as for ambiguity sets where the transition probabilities are modeled as linear combinations of unobserved factors. Our restriction to ambiguity sets of the form~\eqref{eq:additive_s_rect} is motivated by the popularity of the norm and divergence-based ambiguity sets as well as the fact that the associated robust MDPs are amenable to a very efficient solution scheme, as we shall present in the remainder of the paper.
	
A standard approach for determining the optimal value and the optimal policy of the robust MDP~\eqref{eq:robust_mdp} is the robust value iteration \citep{I05:rdp, NG05:rdp, LT07:rmdp_thesis, WKR13:rmdps}: Starting with an initial estimate $\bm{v}^0 \in \mathbb{R}^S$ of the state-wise optimal value to-go, we conduct robust Bellman iterations of the form $\bm{v}^{t+1} \leftarrow \mathfrak{B} (\bm{v}^t)$, $t = 0, 1, \ldots$, where the robust Bellman operator $\mathfrak{B}$ is defined component-wise as
\begin{equation}\label{eq:rob_value_it}
	[\mathfrak{B} (\bm{v})]_s \;\; = \;\;
	\max_{\bm{\pi}_s \in \Delta_A} \; \min_{\bm{p}_s \in \mathcal{P}_s} \; \sum_{a \in \mathcal{A}} \pi_{sa} \cdot \bm{p}_{sa}{}^\top (\bm{r}_{sa} + \lambda \bm{v})
	\qquad \forall s \in \mathcal{S}.
\end{equation}
This yields the optimal value $\bm{p}^0{}^\top \bm{v}^\star$, where the limit $\bm{v}^\star = \lim_{t \rightarrow \infty} \bm{v}^t$ is approached component-wise at a geometric rate. The optimal policy $\bm{\pi}^\star \in \Pi$, finally, is recovered state-wise via
\begin{equation*}
	\bm{\pi}_s^\star \in \mathop{\arg \max}_{\bm{\pi}_s \in \Delta_A} \; \min_{\bm{p}_s \in \mathcal{P}_s} \; \sum_{a \in \mathcal{A}} \pi_{sa} \cdot \bm{p}_{sa}{}^\top (\bm{r}_{sa} + \lambda \bm{v}^\star)
	\qquad \forall s \in \mathcal{S}.
\end{equation*}
At the core of both steps is the solution of a max-min problem. We will show that for $s$-rectangular ambiguity sets of the form~\eqref{eq:additive_s_rect}, this problem can be solved efficiently whenever the following generalized $d_a$-projection of the nominal transition probabilities $\overline{\bm{p}}_{sa}$ can be computed efficiently:
\begin{equation}\label{eq:gen_projection}
	\mathfrak{P} (\overline{\bm{p}}_{sa}; \bm{b}, \beta) =
	\left[
	\begin{array}{l@{\quad}l}
	\text{minimize} & \displaystyle d_a (\bm{p}_{sa}, \overline{\bm{p}}_{sa}) \\
	\text{subject to} & \displaystyle \bm{b}^\top \bm{p}_{sa} \leq \beta \\
	& \displaystyle \bm{p}_{sa} \in \Delta_S
	\end{array}
	\right].
\end{equation}
Here, $\bm{p}_{sa} \in \Delta_S$ are the decision variables and $\overline{\bm{p}}_{sa} \in \Delta_S$, $\bm{b} \in \mathbb{R}^S_+$ and $\beta \in \mathbb{R}_+$ are parameters. Note that problem~\eqref{eq:gen_projection} is feasible if and only if $\min \{ \bm{b} \} \leq \beta$. Moreover, to avoid trivial cases, we assume throughout the paper that $\bm{b}^\top \overline{\bm{p}}_{sa} > \beta$, for otherwise problem~\eqref{eq:gen_projection} is trivially solved by $\overline{\bm{p}}_{sa}$ with an optimal objective value of $0$. Problem~\eqref{eq:gen_projection} is illustrated in Figure~\ref{fig:projection_problem}.
	
\begin{figure}[tb]
	\centering \includegraphics[width = 0.9 \textwidth]{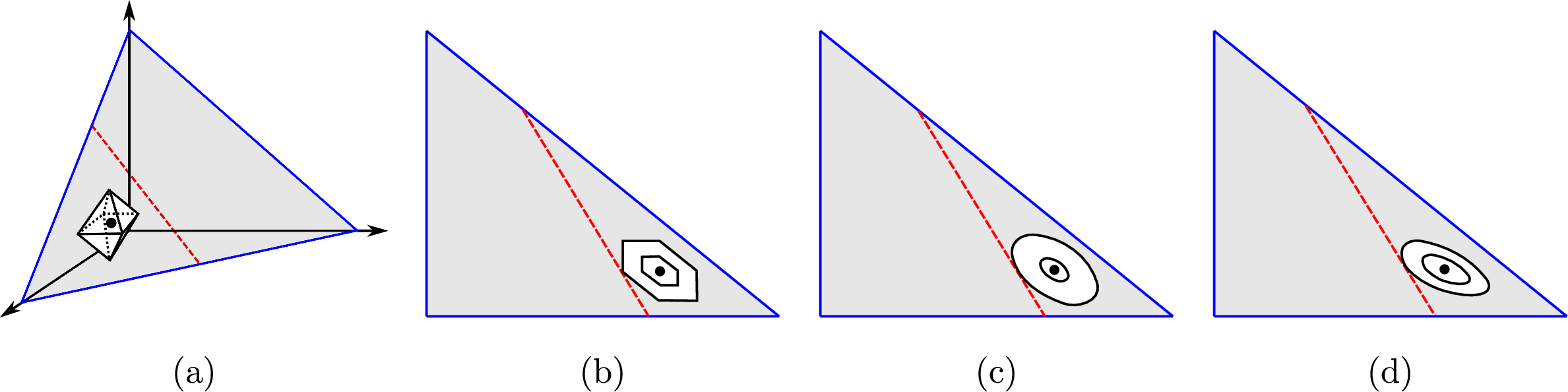}
	\caption{Problem~\eqref{eq:gen_projection} in $S = 3$ dimensions (a) and two-dimensional projections for the 1-norm (b), 2-norm (c) and the Kullback-Leibler divergence (d). The gray shaded areas represent the probability simplex $\Delta_S$, the red dashed lines show the boundaries of the intersections of the halfspaces $\bm{b}^\top \bm{p}_{sa} \leq \beta$ with the probability simplices, and the white shapes illustrate contour lines centered at the nominal transition probabilities $\overline{\bm{p}}_{sa}$. \label{fig:projection_problem}}
\end{figure}
    
Our generalized $d_a$-projection~\eqref{eq:gen_projection} relates to the rich literature on projections onto simplices. One of the most basic variants of this problem, the unweighted $2$-norm projection of a point onto the $S$-dimensional probability simplex, has been used in numerous applications in image processing, finance, optimization and machine learning \citep{C16:projection, AM20:projections}. \cite{M86:projection} proposes one of the earliest algorithms that computes this projection in time $\mathcal{O} (S^2)$ by iteratively reducing the dimension of the problem using Lagrange multipliers. The minimum complexity of $\mathcal{O} (S)$ is achieved, among others, by \cite{MdP89:projection} through a linear-time median-finding algorithm and by \cite{PBFR20:projection} through a filtered bucket-clustering method. The unweighted $2$-norm projection of a point onto the intersection of the $S$-dimensional probability simplex with an axis-parallel hypercube is computed by \cite{WL15:capped} through a sorting-based method and by \cite{AMLHW21:capped} through Newton's method, respectively. \cite{PdMK18:dr_sddp} optimize a linear function over the intersection of a probability simplex with an unweighted $2$-norm constraint through an iterative dimension reduction scheme. \cite{RBHdM19:effective_scenarios} study a variant of this problem where the $2$-norm constraint is replaced with an unweighted $1$-norm constraint, and they identify structural properties of the optimal solutions. \cite{AM20:projections}, finally, study algorithms that optimize linear functions over the intersection of a probability simplex with a constraint that bounds the distance to a nominal distribution. The authors consider as distance measures the unweighted $2$-norm as well as several $\phi$-divergences (including the Kullback-Leibler divergence and the Burg entropy). The algorithms of \cite{PdMK18:dr_sddp}, \cite{RBHdM19:effective_scenarios} and \cite{AM20:projections} can be used to compute the robust Bellman operator for $(s,a)$-rectangular robust MDPs. They do not apply to $s$-rectangular robust MDPs where the uncertainties are coupled across different actions, however, since the ambiguity set $\mathcal{P}_s$ emerges from the intersection of the cross-product of $A$ probability simplices with an additional constraint. Likewise, although the algorithms of \cite{PdMK18:dr_sddp}, \cite{RBHdM19:effective_scenarios} and \cite{AM20:projections} solve problems similar to our generalized $d_a$-projection~\eqref{eq:gen_projection}, they rely on the objective function being linear and the deviation functions participating in the constraints, and it is not clear how to adapt them to problem~\eqref{eq:gen_projection} while preserving their favorable runtimes.
 
 	

Define $\underline{R}_s (\bm{v}) = \max_{a \in \mathcal{A}} \; \min_{s' \in \mathcal{S}} \, \{ r_{sas'} + \lambda v_{s'} \}$ as a lower bound on $[ \mathfrak{B} (\bm{v})]_s$, $s \in \mathcal{S}$, and $\overline{R} = [1 - \lambda]^{-1} \cdot \max \{ r_{sas'} \, : \, s, s' \in \mathcal{S}, \, a \in \mathcal{A} \}$ as a simultaneous upper bound on all $[ \mathfrak{B} (\bm{v}) ]_s$, $\bm{v} \leq \bm{v}^\star$ and $s \in \mathcal{S}$. In the following, we say that for a given approximation $\bm{v}^t \in [0, \overline{R}]^S$ of the optimal value to-go, the robust Bellman iteration~\eqref{eq:rob_value_it} is solved to $\epsilon$-accuracy by any $\bm{v}^{t+1} \in \mathbb{R}^S$ satisfying $\left \lVert \bm{v}^{t+1} - \mathfrak{B} (\bm{v}^t) \right \rVert_\infty \leq \epsilon$. Our interest in $\epsilon$-optimal solutions is motivated by the fact that most of our ambiguity sets are nonlinear, and hence the exact Bellman iterate $\mathfrak{B} (\bm{v}^t)$ may be irrational and thus impossible to compute exactly with a numerical algorithm.

	\begin{thm}\label{thm:overall_complexity:exact_subproblem}
	Assume that the generalized $d_a$-projection~\eqref{eq:gen_projection} can be computed exactly in time $\mathcal{O} (h (S))$. Then the robust Bellman iteration~\eqref{eq:rob_value_it} can be computed for all states $s \in \mathcal{S}$ to accuracy $\epsilon > 0$ in time $\mathcal{O} (A S \cdot h (S) \cdot \log [\overline{R} / \epsilon])$.
	\end{thm}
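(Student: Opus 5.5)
Fix a state $s$ and write $\bm{b}_a = \bm{r}_{sa} + \lambda \bm{v} \in \mathbb{R}^S_+$; this vector is non-negative since $\bm{r} \geq \bm{0}$ and $\bm{v}^t \geq \bm{0}$. The plan is to turn the max-min in~\eqref{eq:rob_value_it} into a one-dimensional search over a threshold $\tau$. Each test of $\tau$ will cost $A$ projections~\eqref{eq:gen_projection}. Bisection over $[\underline{R}_s(\bm{v}), \overline{R}]$ then costs $\mathcal{O}(\log[\overline{R}/\epsilon])$ tests, and summing over the $S$ states gives the claimed bound.

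\textbf{Step 1 (minimax exchange).} The objective in~\eqref{eq:rob_value_it} is bilinear in $(\bm{\pi}_s, \bm{p}_s)$. The set $\Delta_A$ is convex and compact, and $\mathcal{P}_s$ is convex and compact by assumption~(\textbf{C}) together with compactness of $\mathcal{P}$. Sion's theorem therefore gives
\[
[\mathfrak{B}(\bm{v})]_s = \min_{\bm{p}_s \in \mathcal{P}_s} \max_{a \in \mathcal{A}} \bm{b}_a^\top \bm{p}_{sa}.
\]
Hence $[\mathfrak{B}(\bm{v})]_s$ is the smallest $\tau$ for which some $\bm{p}_s \in (\Delta_S)^A$ satisfies both $\bm{b}_a^\top \bm{p}_{sa} \leq \tau$ for all $a$ and $\sum_a d_a(\bm{p}_{sa}, \overline{\bm{p}}_{sa}) \leq \kappa$. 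These constraints decouple across actions, so I define
\[
F(\tau) = \sum_{a \in \mathcal{A}} \mathfrak{P}(\overline{\bm{p}}_{sa}; \bm{b}_a, \tau).
\]
Here I use the convention that a term equals $0$ when $\bm{b}_a^\top \overline{\bm{p}}_{sa} \leq \tau$ and $+\infty$ when $\min\{\bm{b}_a\} > \tau$. Then $\tau \geq [\mathfrak{B}(\bm{v})]_s$ if and only if $F(\tau) \leq \kappa$. Attainment of the minimum in~\eqref{eq:gen_projection} follows from compactness of the feasible set and lower semicontinuity of $d_a$, which I take as implied by convexity on the simplex. Each term is non-increasing in $\tau$, so $F$ is non-increasing and the feasibility test is monotone.

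\textbf{Step 2 (bracketing and bisection).} At $\tau = \overline{R}$ we have $\bm{b}_a^\top \overline{\bm{p}}_{sa} \leq \overline{R}$, because $\bm{v} \in [0,\overline{R}]^S$ gives $\max \bm{b}_a \leq \max r + \lambda\overline{R} = \overline{R}$. Thus $F(\overline{R}) = 0 \leq \kappa$, using (\textbf{D}) and (\textbf{K}). For the lower end, $F(\tau) = \infty$ whenever $\tau < \min_a \min\{\bm{b}_a\}$. More generally, $\underline{R}_s(\bm{v})$ is a valid lower bound on $[\mathfrak{B}(\bm{v})]_s$, as stated in the text. So the root lies in an interval of length at most $\overline{R}$. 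Bisection keeps an interval $[\ell, u]$ with $F(u) \leq \kappa$ and $\ell \leq [\mathfrak{B}(\bm{v})]_s$, halving it at each step. After $\lceil \log_2(\overline{R}/\epsilon) \rceil$ steps, $u - \ell \leq \epsilon$, and outputting $u$ gives $\epsilon$-accuracy.

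Each step evaluates $F$ once. That requires $A$ exact projections costing $\mathcal{O}(h(S))$ each, plus $\mathcal{O}(AS)$ work for the products $\bm{b}_a^\top\overline{\bm{p}}_{sa}$ and $\min\{\bm{b}_a\}$. Since any projection must read its input, $h(S) = \Omega(S)$, and the extra work is absorbed. The per-state cost is $\mathcal{O}(A\,h(S)\log[\overline{R}/\epsilon])$, and multiplying by $S$ states gives the theorem.

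\textbf{Main obstacle.} The delicate point is Step 1: justifying that the threshold characterization is exact. This needs the minimax exchange, and it needs the decomposition to respect the convention that infeasible projections count as $+\infty$ and trivially satisfied ones as $0$. Both rest on convexity (\textbf{C}) and on compactness and closedness of $\mathcal{P}_s$. I would check these carefully, in particular that the feasible set of $\tau$ is exactly $[[\mathfrak{B}(\bm{v})]_s, \infty)$. That set is closed because each projection value, as a function of $\tau$, is lower semicontinuous under compactness. The bisection and complexity accounting afterward are routine.
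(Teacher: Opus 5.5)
Your proposal is correct and follows the paper's proof step for step. You use Sion's theorem to obtain $\min_{\bm{p}_s \in \mathcal{P}_s} \max_{a}$, decouple the test $[\mathfrak{B}(\bm{v})]_s \leq \theta$ action-wise into $\sum_a \mathfrak{P}(\overline{\bm{p}}_{sa}; \bm{r}_{sa} + \lambda \bm{v}, \theta) \leq \kappa$, and bisect with $\mathcal{O}(\log[\overline{R}/\epsilon])$ tests of cost $\mathcal{O}(A \, h(S))$ per state; the paper only differs in bracketing with $[0, \overline{R}]$ instead of $[\underline{R}_s(\bm{v}), \overline{R}]$.

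One correction: convexity does not by itself make $d_a$ lower semicontinuous on the boundary of the simplex, so your attainment remark rests on an extra assumption (the paper also takes attainment for granted). You do not need it. $F$ is convex and non-increasing with $F(\overline{R}) = 0 < \kappa$, so $F(\tau) \leq \kappa$ forces $F(\tau') < \kappa$ for every $\tau' > \tau$, which gives $[\mathfrak{B}(\bm{v})]_s \leq \tau$.
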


Recall that for a non-robust MDP, the nominal Bellman operator can be computed for all states $s \in \mathcal{S}$ in time $\mathcal{O} (AS^2)$. Apart from the logarithmic error term $\mathcal{O} (\log [\overline{R} / \epsilon])$, the overhead of computing~\eqref{eq:rob_value_it} thus amounts to $\mathcal{O} (h(S) / S)$.

As we will see in Sections~\ref{sec:1-norm} and~\ref{sec:2-norm}, the projection problems~\eqref{eq:gen_projection} for the $1$-norm and the $2$-norm ambiguity sets can be solved exactly, and their corresponding robust Bellman iterations can thus be computed along the lines of Theorem~\ref{thm:overall_complexity:exact_subproblem}. For divergence-based ambiguity sets, the projection problem is generically nonlinear and can hence not be expected to be solved to exact optimality. To account for this additional complication, we say that for given $\overline{\bm{p}}_{sa} \in \Delta_S$, $\bm{b} \in \mathbb{R}^S_+$ and $\beta \in \mathbb{R}_+$, the generalized $d_a$-projection $\mathfrak{P} (\overline{\bm{p}}_{sa}; \bm{b}, \beta)$ is solved to $\delta$-accuracy by any pair $(\underline{d}, \overline{d}) \in \mathbb{R}^2$ satisfying $\mathfrak{P} (\overline{\bm{p}}_{sa}; \bm{b}, \beta) \in [\underline{d}, \overline{d}]$ and $\overline{d} - \underline{d} \leq \delta$.
	
\begin{thm}\label{thm:overall_complexity:inexact_subproblem}
	Assume that the generalized $d_a$-projection~\eqref{eq:gen_projection} can be computed to any accuracy $\delta > 0$ in time $\mathcal{O} (h (S, \delta))$. Then the robust Bellman iteration~\eqref{eq:rob_value_it} can be computed for all states $s \in \mathcal{S}$ to accuracy $\epsilon > 0$ in time $\mathcal{O} (A S \cdot h (S, \epsilon \kappa / [2A\overline{R} + A \epsilon]) \cdot \log [\overline{R} / \epsilon])$.
	\end{thm}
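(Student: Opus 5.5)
We follow the route behind Theorem~\ref{thm:overall_complexity:exact_subproblem}: reduce the max-min in~\eqref{eq:rob_value_it} to a one-dimensional bisection on the candidate value $\tau$. Each test of a candidate $\tau$ then needs $A$ generalized projections, and we track how projection errors of size $\delta$ affect the bisection. Fix $s$ and write $\bm{b}_a = \bm{r}_{sa} + \lambda \bm{v} \in \mathbb{R}^S_+$, which is non-negative since $\bm{r}\geq \bm{0}$ and $\bm{v}^t \in [0,\overline{R}]^S$.

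\emph{Step 1 (structural reduction).} By convexity (\textbf{C}) of $\mathcal{P}_s$ and linearity of the objective in $\bm{\pi}_s$ and $\bm{p}_s$, Sion's theorem lets us swap max and min. Hence $[\mathfrak{B}(\bm{v})]_s = \min_{\bm{p}_s\in\mathcal{P}_s}\max_a \bm{b}_a^\top \bm{p}_{sa}$. Therefore $[\mathfrak{B}(\bm{v})]_s \le \tau$ holds iff some $\bm{p}_s \in (\Delta_S)^A$ satisfies $\bm{b}_a^\top\bm{p}_{sa}\le\tau$ for all $a$ and $\sum_a d_a(\bm{p}_{sa},\overline{\bm{p}}_{sa})\le\kappa$. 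Since the budget constraint separates across actions, this is equivalent to
\[
F(\tau) := \sum_{a\in\mathcal{A}} \mathfrak{P}(\overline{\bm{p}}_{sa};\bm{b}_a,\tau) \le \kappa .
\]
Here $\mathfrak{P}=0$ if $\bm{b}_a^\top\overline{\bm{p}}_{sa}\le\tau$ and $\mathfrak{P}=+\infty$ if $\tau<\min\{\bm{b}_a\}$. Compactness of $\mathcal{P}_s$ makes the minima attained. The function $F$ is non-increasing, and it is convex on its domain as a partial minimization of a jointly convex problem in $(\bm{p},\tau)$. Thus $[\mathfrak{B}(\bm{v})]_s = \min\{\tau : F(\tau)\le\kappa\}$, and this value lies in $[\underline{R}_s(\bm{v}),\overline{R}]$.

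\emph{Step 2 (bisection with inexact oracle).} Bisect on $[0,\overline{R}]$ for $\mathcal{O}(\log[\overline{R}/\epsilon])$ rounds. In each round, compute $\delta$-accurate brackets $[\underline{d}_a,\overline{d}_a]$ for the $A$ projections in total time $\mathcal{O}(A\,h(S,\delta))$, and set $\underline{F}=\sum_a\underline{d}_a$ and $\overline{F}=\sum_a\overline{d}_a$, so that $\overline{F}-\underline{F}\le A\delta$. If $\overline{F}\le\kappa$, the candidate $\tau$ is certified feasible. If $\underline{F}>\kappa$, it is certified infeasible. The ambiguous case $F(\tau)\in(\kappa-A\delta,\kappa+A\delta]$ is the main obstacle: we must show that any $\tau$ with $F(\tau)\le \kappa + A\delta$ lies within $\epsilon$ of $\tau^\star=[\mathfrak{B}(\bm{v})]_s$. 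It then suffices to treat ambiguous points as feasible.

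\emph{Step 3 (sensitivity via convexity).} This is where (\textbf{D}) and (\textbf{K}) enter. Pick $\tau_0 \geq \tau^\star-\overline{R}-\epsilon/2$ with $F(\tau_0)\le \kappa+A\delta$, and take a point at level $\tau^\star+\overline{R}$ where $F=0$, since every $\bm{b}_a^\top\overline{\bm{p}}_{sa}\le\overline{R}$ there. We interpolate by convex combinations of the corresponding projection solutions. Convexity of each $d_a$ and linearity of the constraint give $F(\theta\tau_0+(1-\theta)\tau_1)\le\theta F(\tau_0)$. Choosing $\theta=\kappa/(\kappa+A\delta)$ yields a feasible point with $F \le \kappa$, so $\tau^\star \le \theta\tau_0+(1-\theta)\tau_1$. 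This gives $\tau^\star-\tau_0\le(1-\theta)(\tau_1-\tau_0)\le \frac{A\delta}{\kappa+A\delta}(2\overline{R}+\epsilon)$. Requiring this to be at most $\epsilon$, with appropriate constants in the interval length, gives $\delta\le \epsilon\kappa/[2A\overline{R}+A\epsilon]$, which matches the stated accuracy. Any point accepted by the bisection is therefore at least $\tau^\star-\epsilon$, while rejected points are truly below $\tau^\star$, so the final bracket has width $\mathcal{O}(\epsilon)$.

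Summing over the $S$ states and the $\mathcal{O}(\log[\overline{R}/\epsilon])$ bisection rounds, each costing $A$ projections of time $h(S,\delta)$, gives the claimed total complexity.
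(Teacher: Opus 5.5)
Your proof is correct. The outer structure matches the paper's: Sion's theorem, reducing the test $[\mathfrak{B}(\bm{v})]_s\le\tau$ to $\sum_a\mathfrak{P}(\overline{\bm{p}}_{sa};\bm{b}_a,\tau)\le\kappa$, and a bisection driven by $\delta$-accurate brackets whose sums differ by at most $A\delta$. Where you depart from the paper is the control of the ambiguous case $\sum_a\underline{d}_a\le\kappa<\sum_a\overline{d}_a$.

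The paper stops the bisection at that point. It notes that the current $\theta$ is the exact Bellman value for some budget $\kappa'\in[\kappa-A\delta,\kappa+A\delta]$, and bounds $[\mathfrak{B}(\bm{v};\kappa-A\delta)]_s-[\mathfrak{B}(\bm{v};\kappa+A\delta)]_s$ by duality. Lemma~\ref{lem:bound_it_like_beckham} uses the KKT system of the epigraph reformulation and its conjugate dual to show $\omega^\star\le\overline{R}/\sum_a d_a(\bm{p}^\star_{sa},\overline{\bm{p}}_{sa})$ for the budget multiplier. Lemma~\ref{lem:the_grand_unification_of_everything} turns this into a Lipschitz bound in $\kappa$. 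After a binding/non-binding case split, this yields $2A\delta\overline{R}/(\kappa-A\delta)\le\epsilon$, which is exactly where the constant $2A\overline{R}+A\epsilon$ comes from.

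You stay in the primal. Convexity of the minimal-budget function $F$ and the fact that $F=0$ once the nominal kernels are feasible let you interpolate from budget $\kappa+A\delta$ down to budget $\kappa$. This is essentially the same slope bound, obtained for the inverse map $\tau\mapsto F(\tau)$ rather than for $\kappa\mapsto[\mathfrak{B}(\bm{v};\kappa)]_s$.

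Your route is more elementary. It needs only (\textbf{C}), $d_a(\overline{\bm{p}}_{sa},\overline{\bm{p}}_{sa})=0$ and (\textbf{K}). It uses no gradients of $d_a$; the paper's KKT system is written with $\nabla d_a$ and must be read with subgradients for the $1$-norm. It also needs no dual attainment and no case split, and it shows that the less demanding condition $\delta\le\epsilon\kappa/(2A\overline{R})$ already suffices. The paper's route, in return, gives a standalone sensitivity result: an explicit bound on the marginal value of the budget $\kappa$.

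A few details should be tidied.

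The choices $\tau_0\ge\tau^\star-\overline{R}-\epsilon/2$ and $\tau_1=\tau^\star+\overline{R}$ are needlessly convoluted. Every bisection point satisfies $\tau_0\ge 0$, and $\tau_1=\overline{R}$ already has $F(\tau_1)=0$. This gives $\tau^\star-\tau_0\le\frac{A\delta}{\kappa+A\delta}\,\overline{R}$ directly.

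Your inequality does not \emph{produce} $\delta\le\epsilon\kappa/[2A\overline{R}+A\epsilon]$. It produces the weaker requirement $\delta\le\epsilon\kappa/(2A\overline{R})$, which the theorem's $\delta$ satisfies, so your conclusion stands.

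Because you keep bisecting after ambiguous tests, the final bracket with endpoints $\underline{v}\le\overline{v}$ need not contain $\tau^\star$: $\overline{v}$ may sit slightly below it. Return $\overline{v}$ rather than the midpoint; then $\tau^\star-\epsilon\le\overline{v}\le\underline{v}+\epsilon\le\tau^\star+\epsilon$.

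Finally, start the bisection at $\underline{R}_s(\bm{v})$, as in your Step~1 and in the paper, or check $\min\{\bm{b}_a\}\le\tau$ in $\mathcal{O}(S)$ time. Either way the oracle is never called on an infeasible projection.
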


	The complexity bound in this statement differs from the one of Theorem~\ref{thm:overall_complexity:exact_subproblem} only in the revised complexity estimate $h$ for the projection problem.

\begin{table}
    \centering
    \begin{tabular}{l|cccc}
        \multicolumn{1}{l}{ambiguity set} & $d_a (\bm{p}_{sa}, \overline{\bm{p}}_{sa})$ & complexity $\mathfrak{P}$ & complexity $\mathfrak{B}$ & previous best \\ \hline
        weighted $1$-norm & $\left \lVert \text{diag} (\bm{\sigma}_{sa}) (\bm{p}_{sa} - \overline{\bm{p}}_{sa}) \right \rVert_1$ & $\mathcal{O} (S \log S)$ & $\mathcal{O} (AS^2 \log S)$ & $\mathcal{O} (AS^3 \log S)$ \\
        weighted $2$-norm & $\left \lVert \text{diag} (\bm{\sigma}_{sa}) (\bm{p}_{sa} - \overline{\bm{p}}_{sa}) \right \rVert_2^2$ & $\mathcal{O} (S^2)$ & $\mathcal{O} (AS^3)$ & n/a \\
        Kullback-Leibler & $\sum_{s'} p_{sas'} \log \left( \frac{p_{sas'}}{\overline{p}_{sas'}} \right)$ & $\mathcal{O} (S \log A)$ & $\mathcal{O} (AS^2 \log A)$ & $\mathcal{O} (\ell^2 A S^2)$ \\
        Burg entropy & $\sum_{s'} \overline{p}_{sas'} \log \left( \frac{\overline{p}_{sas'}}{p_{sas'}} \right)$ & $\mathcal{O} (S \log A)$ & $\mathcal{O} (AS^2 \log A)$ & n/a \\ \hline
    \end{tabular}
    \caption{Computational complexity of the generalized $d_a$-projections $\mathfrak{P}$ and the robust Bellman operators $\mathfrak{B}$ for different ambiguity sets, together with the previously best-known results from the literature. The complexity estimates disregard logarithmic error terms that are independent of $S$ and that are characterized in the subsequent sections. \label{tab:complexity_overview}}
\end{table}

Section~\ref{sec:phi_div} discusses the approximate solution of~\eqref{eq:gen_projection} for several divergence-based ambiguity sets. Their corresponding robust Bellman iterations can be computed along the lines of Theorem~\ref{thm:overall_complexity:inexact_subproblem}.

Table~\ref{tab:complexity_overview} summarizes the ambiguity sets studied in this paper, as well as the costs of computing the associated generalized $d_a$-projections~\eqref{eq:gen_projection} and robust Bellman iterations~\eqref{eq:rob_value_it}. We also list the previously best known complexity results for the robust Bellman iterations from the literature as reported by \cite{HPW18:fast_bellman} and \cite{HPW21:ppi} for the weighted $1$-norm and by \cite{GCK21:first_order_RO} for the Kullback-Leibler divergence. Recall that the nominal Bellman operator for non-robust MDPs can be computed in time $\mathcal{O} (AS^2)$, and that the robust Bellman operator for all of the ambiguity sets in Table~\ref{tab:complexity_overview} can be computed in a practical complexity of $\mathcal{O} (A^3 S^3)$ using interior-point methods \citep{BV04:CVXBook}. We conclude that apart from logarithmic error terms which we elaborate upon in the subsequent sections, computing the robust Bellman operator in our framework adds a logarithmic overhead (in the weighted $1$-norm, the Kullback-Leibler divergence and the Burg entropy) or a linear overhead (in the weighted $2$-norm) over the computation of the nominal Bellman operator.

\section{$1$-Norm Ambiguity Sets}\label{sec:1-norm}

We first assume that the deviation measure $d_a$ in the $s$-rectangular ambiguity set~\eqref{eq:additive_s_rect} satisfies $d_a (\bm{p}_{sa}, \overline{\bm{p}}_{sa}) = \left \lVert \text{diag} (\bm{\sigma}_{sa}) (\bm{p}_{sa} - \overline{\bm{p}}_{sa}) \right \rVert_1$ with $\bm{\sigma}_{sa} > \bm{0}$ component-wise. For ease of notation, we define the diagonal matrix $\bm{\Sigma}_{sa} = \text{diag} (\bm{\sigma}_{sa}) \succ \bm{0}$. $1$-norm ambiguity sets model variation distances \citep{BL15:phi_div, HPW22:phi_divergence_mdps} and admit a Bayesian interpretation \citep{RP19:bayesian}, and they can be calibrated to historical observations of the MDP's transitions via Hoeffding-style bounds \citep{WOSVW03:l1} or their relationship to Kullback-Leibler ambiguity sets \citep{I05:rdp}. In this paper, we assume that the generalized projection problem is feasible, that is, $\min \{ \bm{b} \} \leq \beta$. If this condition does not hold, then the outer bisection method fails, and one must search for a larger value of $\beta$ as part of the bisection procedure.

\begin{prop}
	Assume that the projection problem~\eqref{eq:gen_projection} is feasible, that is, that $\min \{ \bm{b} \} \leq \beta$. Then, for the deviation measure $d_a (\bm{p}_{sa}, \overline{\bm{p}}_{sa}) = \left \lVert \bm{\Sigma}_{sa} (\bm{p}_{sa} - \overline{\bm{p}}_{sa}) \right \rVert_1$, the optimal value of~\eqref{eq:gen_projection} equals the optimal value of the univariate convex optimization problem
	\begin{equation}\label{eq:norm_1:projection_problem}
		\begin{array}{l@{\quad}l}
			\text{\emph{maximize}} & \displaystyle \displaystyle (\overline{\bm{p}}_{sa}{}^\top \bm{b} - \beta) \cdot \alpha - \overline{\bm{p}}_{sa}{}^\top [ \bm{b} \alpha - \min \{ \bm{b} \alpha + \bm{\sigma}_{sa} \} \cdot \mathbf{e} - \bm{\sigma}_{sa} ]_+ \\
			\text{\emph{subject to}} & \displaystyle \alpha \in \mathbb{R}_+.
		\end{array}
	\end{equation}
\end{prop}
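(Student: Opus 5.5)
We will prove the statement by Lagrangian duality, dualizing only the budget constraint $\bm{b}^\top \bm{p}_{sa} \leq \beta$ and keeping the simplex constraint explicit. Problem~\eqref{eq:gen_projection} with the weighted $1$-norm objective is a linear program after the standard epigraph reformulation: it has a polyhedral feasible region, and it is feasible by the assumption $\min \{ \bm{b} \} \leq \beta$. Hence strong duality holds without any Slater-type condition, and
\[
\mathfrak{P} (\overline{\bm{p}}_{sa}; \bm{b}, \beta) = \max_{\alpha \geq 0} \; \Big\{ -\alpha \beta + \min_{\bm{p} \in \Delta_S} \big[ \lVert \bm{\Sigma}_{sa} (\bm{p} - \overline{\bm{p}}_{sa}) \rVert_1 + \alpha \bm{b}^\top \bm{p} \big] \Big\}.
\]
Concavity of the resulting univariate objective in $\alpha$ is automatic, since it is a pointwise minimum of affine functions of $\alpha$. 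This gives the claimed convexity of~\eqref{eq:norm_1:projection_problem} as a maximization problem.

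The main work is to evaluate the inner minimum in closed form. We dualize the inner problem once more, this time introducing a multiplier $\mu$ for $\mathbf{e}^\top \bm{p} = 1$ and using $\bm{p} \geq \bm{0}$ together with the conjugate of the weighted $1$-norm. Writing $\bm{p} = \overline{\bm{p}}_{sa} + \bm{x}$, the inner problem becomes
\[
\alpha \bm{b}^\top \overline{\bm{p}}_{sa} + \min_{\bm{x}} \Big\{ \textstyle\sum_{s'} \sigma_{s'} |x_{s'}| + \alpha b_{s'} x_{s'} \;:\; \mathbf{e}^\top \bm{x} = 0, \; \bm{x} \geq -\overline{\bm{p}}_{sa} \Big\}.
\]
Its LP dual, with $\mu$ for the equality constraint and $\bm{\nu} \geq \bm{0}$ for the lower bounds, reads
\[
\max_{\mu, \, \bm{\nu} \geq \bm{0}} \Big\{ -\overline{\bm{p}}_{sa}{}^\top \bm{\nu} \;:\; |\alpha b_{s'} - \mu - \nu_{s'}| \leq \sigma_{s'} \text{ for all } s' \Big\},
\]
or equivalently the condition $-\sigma_{s'} \leq \alpha b_{s'} - \mu - \nu_{s'} \leq \sigma_{s'}$. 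For fixed $\mu$, the optimal choice is $\nu_{s'} = [\alpha b_{s'} - \mu - \sigma_{s'}]_+$. The remaining lower-bound condition $\alpha b_{s'} - \mu - \nu_{s'} \geq -\sigma_{s'}$ then reduces to $\mu \leq \alpha b_{s'} + \sigma_{s'}$ for all $s'$, that is, $\mu \leq \min \{ \bm{b} \alpha + \bm{\sigma}_{sa} \}$.

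Since $-\overline{\bm{p}}_{sa}{}^\top [\bm{b}\alpha - \mu \mathbf{e} - \bm{\sigma}_{sa}]_+$ is nondecreasing in $\mu$, the optimal multiplier is $\mu = \min \{ \bm{b} \alpha + \bm{\sigma}_{sa} \}$. Substituting this value and adding the terms $\alpha \bm{b}^\top \overline{\bm{p}}_{sa} - \alpha \beta$ yields exactly the objective of~\eqref{eq:norm_1:projection_problem}. I would double-check the orientation of the signs by testing $\alpha = 0$, where the objective should equal $0$; this holds because $\bm{\sigma}_{sa} > \bm{0}$ makes the positive part vanish.

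The main obstacle is the bookkeeping of the nested dualization. The substitution must leave the inner problem bounded; this holds because $\bm{\sigma}_{sa} > \bm{0}$ and the feasible set is compact. We must also justify that the maximization over $\mu$ is attained at the boundary value $\min \{ \bm{b} \alpha + \bm{\sigma}_{sa} \}$. An alternative I would keep in reserve is a direct primal argument. For fixed $\alpha$, the inner minimum over the simplex moves mass from $\overline{\bm{p}}_{sa}$ to the single index $s^\star$ minimizing $\alpha b_{s'} + \sigma_{s'}$, and only from those components where this is profitable. Computing the cost of this transfer gives the same positive-part expression.
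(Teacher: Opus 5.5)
Your proof is correct and follows essentially the paper's argument. The paper takes the full LP dual of \eqref{eq:gen_projection} in one step, with multipliers $\alpha$, $\zeta$, $\bm{\gamma}$ corresponding to your $\alpha$, $\mu$, $\bm{\nu}$; it then sets $\bm{\gamma} = [\bm{b}\alpha - \zeta\mathbf{e} - \bm{\sigma}_{sa}]_+$ and, by monotonicity, $\zeta = \min\{\bm{b}\alpha + \bm{\sigma}_{sa}\}$, exactly as you do, so your two-stage dualization simply spells out the ``basic algebraic manipulations'' the paper leaves implicit.
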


\begin{proof}
	Strong linear programming duality, which holds since problem~\eqref{eq:gen_projection} is feasible and bounded, implies that the optimal value of~\eqref{eq:gen_projection} coincides with the optimal value of its dual. After some basic algebraic manipulations, the dual of problem~\eqref{eq:gen_projection} becomes
	\begin{equation*}
		\begin{array}{l@{\quad}l}
			\text{maximize} & \displaystyle (\overline{\bm{p}}_{sa}{}^\top \bm{b} - \beta) \cdot \alpha - \overline{\bm{p}}_{sa}{}^\top \bm{\gamma} \\
			\text{subject to} & \displaystyle \left \lVert \bm{\Sigma}_{sa}^{-1} (-\bm{b} \alpha + \zeta \mathbf{e} + \bm{\gamma}) \right \rVert_\infty \leq 1 \\
			& \displaystyle \alpha \in \mathbb{R}_+, \;\; \zeta \in \mathbb{R}, \;\; \bm{\gamma} \in \mathbb{R}^S_+.
		\end{array}
	\end{equation*}
	For given $\alpha$ and $\zeta$, the first constraint is satisfied by any $\bm{\gamma} \in \mathbb{R}^S_+ \cap [\bm{b} \alpha - \zeta \mathbf{e} - \bm{\sigma}_{sa}, \, \bm{b} \alpha - \zeta \mathbf{e} + \bm{\sigma}_{sa}]$. In particular, such a $\bm{\gamma}$ exists if and only if $\bm{b} \alpha - \zeta \mathbf{e} + \bm{\sigma}_{sa} \geq \bm{0}$. Whenever this is the case, an optimal choice of $\bm{\gamma}$ is given by $\bm{\gamma}^\star = [ \bm{b} \alpha - \zeta \mathbf{e} - \bm{\sigma}_{sa} ]_+$ since the term $-\overline{\bm{p}}_{sa}{}^\top \bm{\gamma}$ in the objective function is non-increasing in $\bm{\gamma}$. We can thus remove $\bm{\gamma}$ from the problem to obtain the equivalent formulation
	\begin{equation*}
		\begin{array}{l@{\quad}l}
			\text{maximize} & \displaystyle (\overline{\bm{p}}_{sa}{}^\top \bm{b} - \beta) \cdot \alpha - \overline{\bm{p}}_{sa}{}^\top [ \bm{b} \alpha - \zeta \mathbf{e} - \bm{\sigma}_{sa} ]_+ \\
			\text{subject to} & \displaystyle \bm{b} \alpha + \bm{\sigma}_{sa} \geq \zeta \mathbf{e} \\
			& \displaystyle \alpha \in \mathbb{R}_+, \;\; \zeta \in \mathbb{R}.
		\end{array}
	\end{equation*}
	Note that the objective function above is non-decreasing in $\zeta$ since $\overline{\bm{p}}_{sa}$ is component-wise non-negative. For a given $\alpha$, the largest $\zeta$ that satisfies the first constraint in the problem is $\zeta^\star = \min \{ \bm{b} \alpha + \bm{\sigma}_{sa} \}$. Removing $\zeta$ then yields the formulation~\eqref{eq:norm_1:projection_problem} in the statement of the proposition. This is a convex optimization problem as it maximizes a piecewise linear concave objective function over $\mathbb{R}_+$.
\end{proof}

\begin{obs}
	For $S \geq 2$, the objective function in problem~\eqref{eq:norm_1:projection_problem} is piecewise affine with at most $2 S - 3$ breakpoints.
\end{obs}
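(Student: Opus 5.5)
The plan is to write the objective of~\eqref{eq:norm_1:projection_problem} as
\[
f(\alpha) = (\overline{\bm{p}}_{sa}{}^\top \bm{b} - \beta)\,\alpha - \sum_{i \in \mathcal{S}} \overline{p}_{sai}\,\bigl[h_i(\alpha) - 2\sigma_{sai}\bigr]_+,
\qquad h_i(\alpha) = b_i\alpha + \sigma_{sai} - m(\alpha),
\]
where $m(\alpha) = \min_{j} \{ b_j \alpha + \sigma_{saj} \}$. Every breakpoint of $f$ on $(0,\infty)$ is then either a kink of $m$ or a point where some $h_i$ crosses the level $2\sigma_{sai}$. I will count these two sources separately and then subtract the coincidences and vacuous cases. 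This gives an upper bound on the number of breakpoints; the count need not be attained.

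\emph{Kinks of $m$.} The function $m$ is the lower envelope of $S$ lines. As $\alpha$ grows, the active line switches to lines of strictly smaller slope $b_j$, so each line occupies at most one interval. Hence $m$ has at most $S-1$ kinks on $(0,\infty)$.

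\emph{Crossings of the $h_i$.} Each $h_i$ is convex, piecewise linear and non-negative. At the origin, $h_i(0) = \sigma_{sai} - \min\{\bm{\sigma}_{sa}\} < 2\sigma_{sai}$ because $\bm{\sigma}_{sa} > \bm{0}$. Therefore the sublevel set $\{h_i \le 2\sigma_{sai}\}$ is an interval containing $0$, and each $h_i$ crosses its level at most once. This gives at most $S$ further breakpoints, for a crude total of $2S-1$.

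\emph{Saving one breakpoint.} Let $i^\star$ be the index that attains $m$ for all large $\alpha$, namely the one with smallest $b_i$ and, among ties, smallest $\sigma_{sai}$. Then $h_{i^\star}$ is convex and eventually equal to $0$, so it is non-increasing. Together with $h_{i^\star}(0) < 2\sigma_{sai^\star}$, this means the $i^\star$-term vanishes identically and contributes no breakpoint. The bound drops to $2S-2$.

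\emph{Saving the last breakpoint.} This is the step I expect to be the main obstacle. I would split on whether the envelope $m$ uses all $S$ lines.

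If it does not, then $m$ has at most $S-2$ kinks, and the bound $2S-3$ follows immediately.

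If it does, every line $j$ is active on some interval $[\alpha_j^-, \alpha_j^+]$, and $h_j$ vanishes exactly there. Convexity then forces $h_j$ to be non-increasing before $\alpha_j^-$. Consequently the unique crossing of $h_j$, if it exists, lies to the right of $\alpha_j^+$. The first thing I would try is to show that at the first kink $\alpha_1$ of $m$ nothing else happens. The argmin there switches from the initial line $k$ (largest $b$ among the minimisers of $\bm{\sigma}_{sa}$) to the next one. If every $h_i$ with $i \neq k$ is still below $2\sigma_{sai}$ at $\alpha_1$, then no term is active yet and $f$ is linear across $\alpha_1$, so this kink is not a breakpoint of $f$.

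If that direct argument fails, my fallback is the following. I would show that on $[0, \alpha_1]$ all $h_i$ are affine in $\alpha$, with slopes $b_i - b_k \le 0$, so none of them can increase there and none can cross $2\sigma_{sai}$ before $\alpha_1$. I would then verify that the possible crossing of $h_k$ and the kink $\alpha_1$ cannot both be genuine breakpoints of $f$, by comparing the left and right slopes of $f$ at these points directly.

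Either way one more breakpoint is removed, yielding at most $2S-3$ for $S \ge 2$. The case $S=2$ serves as a sanity check: at most one kink of $m$ together with one nontrivial crossing must collapse to a single breakpoint.
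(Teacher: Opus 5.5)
Your proof is correct, and it has the same skeleton as the paper's. Breakpoints are kinks of $m$ or roots of the components $b_i\alpha - m(\alpha) - \sigma_{sai}$. The minimal-slope component never becomes positive, which leaves at most $S-1$ roots, and at most $S-2$ kinks can be genuine breakpoints.

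The difference lies in how you obtain the $S-2$. The paper bounds the number of kinks at which some component is strictly positive by noting that each component has at most $S-1$ distinct strictly positive slopes $b_i - b_j$. This tacitly uses two facts about a convex component that is negative at $0$: it has positive slope on both sides of any kink where it is positive, and it stays positive once it gets there. You instead split on whether all $S$ lines appear in the envelope on $(0,\infty)$, and if they do, you show that the first kink $\alpha_1$ is inert.

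That step needs no fallback; your direct argument always succeeds. You already showed that the crossing of $h_j$ lies strictly to the right of $\alpha_j^+$, and $\alpha_j^+ \geq \alpha_1$ for every $j$. Hence all $h_j < 2\sigma_{saj}$ on a neighbourhood of $\alpha_1$, and there the objective is simply $(\overline{\bm{p}}_{sa}{}^\top \bm{b} - \beta)\alpha$.

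One minor slip: for $\alpha > 0$, the initial line is the minimiser of $\bm{\sigma}_{sa}$ with the \emph{smallest} $b$, not the largest. In your all-lines case that minimiser is unique, so nothing breaks.

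As for what each route buys: the paper's slope count is shorter and avoids the case split, but its key sentence is terse. Your version makes explicit which breakpoint is lost and why, through the observation that each $h_j$ is non-increasing until line $j$ becomes active.
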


\begin{proof}
	We define the vector-valued function $\bm{f} : \mathbb{R}_+ \mapsto \mathbb{R}^S$ through 
    $\bm{f} (\alpha) = \bm{b} \alpha - \min \{ \bm{b} \alpha + \bm{\sigma}_{sa} \} \mathbf{e} - \bm{\sigma}_{sa}$, which corresponds to the expression inside the $[ \cdot ]_+$-operator in the objective function of problem~\eqref{eq:norm_1:projection_problem}. Note that the components $f_i$, $i \in \mathcal{S}$, of this function are piecewise affine, they share the same breakpoints, and they satisfy $f_i (0) < 0$. Since each component $f_i$ can have at most $S - 1$ different strictly positive slopes, there can be at most $S - 2$ breakpoints $\alpha$ of $\bm{f}$ where at least one component $f_i (\alpha)$ is strictly positive. Moreover, every component $f_i$ that corresponds to the slope $b_i \in \arg \min \{ b_j \, : \, j \in \mathcal{S} \}$ remains negative for all $\alpha \in \mathbb{R}_+$. Thus, applying the $[ \cdot ]_+$-operator on $\bm{f}$, as done in the objective function of~\eqref{eq:norm_1:projection_problem}, introduces at most one additional breakpoint for at most $S - 1$ components of $\bm{f}$. In summary, the objective function in~\eqref{eq:norm_1:projection_problem} thus has at most $(S - 2) + (S - 1) = 2 S - 3$ breakpoints.
\end{proof}

Next, we present an algorithm that determines all breakpoints as well as the optimal objective value of~\eqref{eq:norm_1:projection_problem} in time $\mathcal{O} (S \log S)$. Our algorithm proceeds in four steps. Steps~1 and~2 compute the line segments and breakpoints of the expression $\min \{ \bm{b} \alpha + \bm{\sigma}_{sa} \}$ inside the objective function of~\eqref{eq:norm_1:projection_problem}. Step~3 determines the additional breakpoints that are introduced by the $[\cdot]_+$-operator in~\eqref{eq:norm_1:projection_problem}, and Step~4 traverses all breakpoints to compute the optimal value of~\eqref{eq:norm_1:projection_problem}. 

\begin{figure}[tb]
	\includegraphics[width = \textwidth]{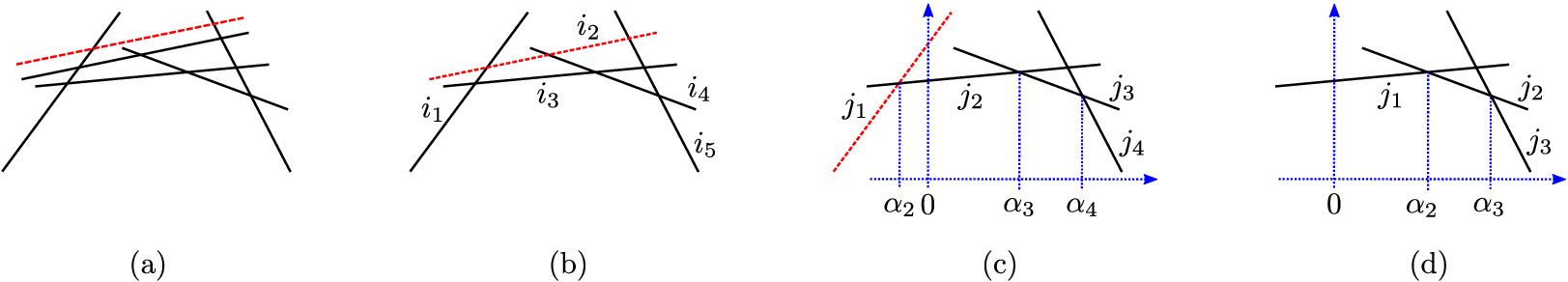}
	\caption{From left to right: lines $b_i \alpha + \sigma_{sai}$, $i = 1, \ldots, S$, before Step~1 (a); lines $b_{i_j} \alpha + \sigma_{sa i_j}$, $i_1, \ldots, i_m$, after Step~1 (b); lines $b_{j_k} \alpha + \sigma_{sa j_k}$, $j_1, \ldots, j_n$, after Step~2 (c); line segments $b_{j_k} \alpha + \sigma_{sa j_k}$, $j_1, \ldots, j_n$ (possibly relabeled), after removing negative breakpoints $\alpha_j$ (d). Note that $\bm{b} \geq \bm{0}$ and hence all line segments are non-decreasing in this section; we chose to include negative slopes in this figure to aid visual clarity. \label{fig:norm1_algo}}
\end{figure}

Step~1 sorts the components $b_1, \ldots, b_S$ of $\bm{b}$ in descending order. Moreover, whenever two components of $\bm{b}$ are equal, we remove the component whose associated entry in $\bm{\sigma}_{sa}$ is larger (ties are broken arbitrarily). In summary, the first step results in an index list $i_1, \ldots, i_m$, $m \leq S$, such that $b_{i_1} > \ldots > b_{i_m}$. Step~1 is illustrated in Figure~\ref{fig:norm1_algo}~(a) and~(b). Among the lines $b_i \alpha + \sigma_{sai}$, $i = 1, \ldots, S$, in Figure~\ref{fig:norm1_algo}~(a), the red dashed line is redundant and hence removed. Moreover, the remaining lines in Figure~\ref{fig:norm1_algo}~(b) satisfy that those participating in the concave envelope of $\min \{ \bm{b} \alpha + \bm{\sigma}_{sa} \}$, that is, the lines $i_1$, $i_3$, $i_4$ and $i_5$, are ordered from left to right. Using a heapsort or merge sort algorithm, Step~1 can be completed in time $\mathcal{O} (S \log S)$. If after Step~1 we have $m = 1$, that is, only a single line is left, then the expression inside the $[\cdot]_+$-operator in problem~\eqref{eq:norm_1:projection_problem} is linear and the problem can be solved directly. Thus, we assume in the following that $m > 1$.


\begin{algorithm}[tb]
	\KwData{Sorted index list $i_1, \ldots, i_m$ of the components of $\bm{b}$ such that $b_{i_1} > \ldots > b_{i_m}$}
	Push $(i_1, \alpha_1)$ onto $H$, where $\alpha_1 = - \infty$; ~terminate if $m = 1$\;
	Push $(i_2, \alpha_2)$ onto $H$, where $\alpha_2$ is the intersection of $b_{i_1} \alpha + \sigma_{sai_1}$ and $b_{i_2} \alpha + \sigma_{sai_2}$\;
	\For {$k = 3, \dots, m$}
	{
		\Repeat{$|H| = 1$ or $H$ remained unchanged}
		{
			Let $(i', \alpha')$ be the element on top of $H$\;
			Let $\alpha_k$ be the intersection of $b_{i_k} \alpha + \sigma_{sai_k}$ and $b_{i'} \alpha + \sigma_{sai'}$\;
			\textbf{if} $\alpha_k  \leq \alpha'$ \textbf{then} pop the top-most element $(i', \alpha')$ from $H$; \textbf{end} \\
		}
		Push $(i_k, \alpha_k)$ onto $H$\;
	}
	\KwResult{Sorted lists of supporting line indices $j_1, \ldots, j_n$ and endpoints $\alpha_1 < \ldots < \alpha_n$ in $H$}
	\caption{Breakpoints of $\min \{ \bm{b} \alpha + \bm{\sigma}_{sa} \}$ in problem~\eqref{eq:norm_1:projection_problem} \label{alg:1norm:dual_graham_scan}}
\end{algorithm}

In Step~2, we use Algorithm~\ref{alg:1norm:dual_graham_scan} to construct the concave envelope of the mapping $\alpha \mapsto \min \{ \bm{b} \alpha + \bm{\sigma}_{sa}\}$ in problem~\eqref{eq:norm_1:projection_problem}. The algorithm uses a stack $H$ to store the indices $j_k$ of the supporting line segments $b_{j_k} \alpha + \sigma_{s a j_k}$ of this mapping, as well as the breakpoints $\alpha_k$, $k = 2, \ldots, n$, between each successive pair of line segments $b_{j_{k-1}} \alpha + \sigma_{s a j_{k-1}}$ and $b_{j_k} \alpha + \sigma_{s a j_k}$. Algorithm~\ref{alg:1norm:dual_graham_scan} is illustrated in Figure~\ref{fig:norm1_algo}~(b) and~(c): taking the indices $i_1, \ldots, i_m$ with $m = 5$ as input, the algorithm removes the redundant line segment $i_2$ and returns the indices $j_1, \ldots, j_n$ with $n = 4$ as output.

\begin{lem}\label{lem:step_2}
	Algorithm~\ref{alg:1norm:dual_graham_scan} terminates in time $\mathcal{O} (S)$ and returns the rays $b_{j_1} \alpha + \sigma_{saj_1}$, $\alpha \in (- \infty, \alpha_2]$, and $b_{j_n} \alpha + \sigma_{saj_n}$, $\alpha \in [\alpha_n, +\infty)$, as well as the line segments $b_{j_k} \alpha + \sigma_{saj_k}$, $\alpha \in[\alpha_k, \alpha_{k+1}]$ and $k = 2, \ldots, n - 1$, that form the concave envelope of $\min \{ \bm{b} \alpha + \bm{\sigma}_{sa} \}$.
\end{lem}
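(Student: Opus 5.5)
This is the dual form of Graham's scan (the monotone-chain convex hull), and I would prove it in two parts: a running-time bound by amortization, and correctness by induction on $k$.

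\textbf{Running time.} Each index $i_k$ is pushed onto $H$ exactly once. Every pass through the repeat loop either pops an element or leaves $H$ unchanged; in the latter case the loop terminates. Hence the total number of pops over the whole run is at most $m$. The number of non-popping, terminating passes is at most one per iteration $k$, so at most $m$ as well. Each pass performs $\mathcal{O}(1)$ work, namely one line intersection and one comparison. Since $m \leq S$, the total time is $\mathcal{O}(S)$.

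\textbf{Correctness.} Let $g_k(\alpha) = \min_{l \leq k} \{ b_{i_l}\alpha + \sigma_{sai_l} \}$. This is concave and piecewise affine. Because $b_{i_1} > \ldots > b_{i_m}$, the lines appearing in its graph occur from left to right in order of decreasing slope. I would prove the following invariant: after iteration $k$, the stack $H$ holds, from bottom to top, exactly the indices $j_1,\ldots,j_r$ of lines that attain $g_k$ on an interval of positive length, together with strictly increasing breakpoints $\alpha_1 = -\infty < \alpha_2 < \ldots < \alpha_r$. Here line $j_l$ coincides with $g_k$ on $[\alpha_l, \alpha_{l+1}]$, with the convention $\alpha_{r+1} = +\infty$.

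The base cases $k = 1, 2$ are immediate. Since slopes are distinct, two lines intersect in a single point, and line $i_1$ (the steeper one) lies below on the left.

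For the inductive step, take the new line $\ell_k(\alpha) = b_{i_k}\alpha + \sigma_{sai_k}$. It has the smallest slope, so $\ell_k$ lies below any earlier line exactly on a right half-line $[\alpha^\ast, \infty)$. Consequently $g_k = \min\{g_{k-1}, \ell_k\}$ agrees with $g_{k-1}$ to the left of the unique point $\alpha^\ast$ where $\ell_k$ crosses $g_{k-1}$, and with $\ell_k$ to the right of it. Such a unique crossing exists because $g_{k-1} - \ell_k$ is concave and eventually increasing.

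The central claim concerns the top element $(i',\alpha')$ of the stack. If the intersection $\alpha_k$ of $\ell_k$ with line $i'$ satisfies $\alpha_k \leq \alpha'$, then $\ell_k$ already lies on or below $g_{k-1}$ on all of $[\alpha', \infty)$. So line $i'$ no longer attains $g_k$ on any interval of positive length and may be popped. Otherwise $\alpha_k > \alpha'$, so $\alpha^\ast = \alpha_k$ falls inside the segment of $i'$. In that case all lower stack elements still support $g_k$ unchanged, and pushing $(i_k,\alpha_k)$ restores the invariant with strictly increasing breakpoints.

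\textbf{Main obstacle.} The delicate step is justifying that comparing $\ell_k$ only with the top of the stack is sufficient. This follows from concavity: once $\ell_k$ meets line $i'$ strictly to the right of $\alpha'$, it lies above $g_{k-1}$ on $(-\infty,\alpha']$, because on that region $g_{k-1}$ consists of lines of larger slope that lie below line $i'$ to the left of $\alpha'$.

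I would also handle the tie case $\alpha_k = \alpha'$ explicitly. There the old segment degenerates to a point and popping it is correct. The tie-breaking in Step~1 guarantees that parallel lines never need to be compared.

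Applying the invariant at $k = m$, and noting $g_m = \min\{\bm b\alpha + \bm\sigma_{sa}\}$, yields the stated rays and segments.
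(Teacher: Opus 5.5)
Your proposal is correct and follows essentially the same route as the paper: the $\mathcal{O}(S)$ bound comes from each index being pushed once and popped at most once, and correctness comes from an induction on $k$ showing that the stack encodes the envelope of the first $k$ lines; your one-pop-at-a-time test against the top of the stack is a more explicit version of the paper's argument that the line $j_{l''}$ meeting $b_{i_k}\alpha + \sigma_{sai_k}$ last is kept while all lines above it are removed. One detail that both you and the paper gloss over: if the repeat loop exits because a pop leaves $|H| = 1$, the $\alpha_k$ that gets pushed was computed against the popped line rather than the remaining line $j_1$, so your step ``pushing $(i_k,\alpha_k)$ restores the invariant'' tacitly assumes $\alpha_k$ is recomputed against the current top (equivalently, that the exit condition $|H|=1$ is dropped, which costs nothing since the bottom element with $\alpha_1=-\infty$ is never popped).
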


\begin{proof}
	We show that the rays and line segments from the statement of the lemma form the concave envelope of $\min \{ b_{i_k} \alpha + \sigma_{sai_k} \,: \, k = 1, \ldots, m \}$. The statement then follows from the fact that the concave envelope of $\min \{ \bm{b} \alpha + \bm{\sigma}_{sa} \}$ does not change if we omit the indices $i \in \mathcal{S} \setminus \{ i_1, \ldots, i_m \}$ from the minimum that were removed in Step~1 of our overall procedure.
	
	We show the statement via an induction on the iteration counter $k$ in the for-loop. Assume that at the beginning of iteration $k$, $H$ contains the elements $(j_1, \alpha_1), \ldots, (j_l, \alpha_l)$ and that the concave envelope of $\min \{ b_{j_\ell} \alpha + \sigma_{saj_\ell} \,: \, \ell = 1, \ldots, k - 1 \}$ is given by the rays $b_{j_1} \alpha + \sigma_{saj_1}$, $\alpha \in (- \infty, \alpha_2]$, and $b_{j_l} \alpha + \sigma_{saj_l}$, $\alpha \in [\alpha_l, +\infty)$, as well as the line segments $b_{j_\ell} \alpha + \sigma_{saj_\ell}$, $\alpha \in[\alpha_\ell, \alpha_{\ell+1}]$ and $\ell = 2, \ldots, l - 1$. Note that this indeed holds true initially when $k = 3$, in which case $l = 2$. We claim that at the end of iteration $k$, the elements $(j'_1, \alpha'_1), \ldots, (j'_{l'}, \alpha'_{l'})$ in $H$ form the concave envelope of $\min \{ b_{j_\ell} \alpha + \sigma_{saj_\ell} \,: \, \ell = 1, \ldots, k \}$. Indeed, let $j_{l''}$, $1 \leq l'' \leq l$, be the index of the line $b_{i_{l''}} \alpha + \sigma_{sai_{l''}}$ that intersects the line $b_{i_k} \alpha + \sigma_{sai_k}$ last (\emph{i.e.}, at the largest value of $\alpha$) among $j_1, \ldots, j_l$, and let $\alpha'$ be the point of intersection. The line $b_{i_k} \alpha + \sigma_{sai_k}$ supports the concave envelope of $\min \{ b_{j_\ell} \alpha + \sigma_{saj_\ell}\,: \, \ell = 1, \ldots, k \}$ for $\alpha \geq \alpha'$ since $b_{i_k} < b_{j_\ell}$ for $\ell = 1, \ldots, l$. Thus, the new concave envelope consists of the lines $b_{j_\ell} \alpha + \sigma_{saj_\ell}$, $\ell = 1, \ldots, l''$, as well as $b_{i_k} \alpha + \sigma_{sai_k}$. The inner loop in Algorithm~\ref{alg:1norm:dual_graham_scan} removes all redundant lines $b_{j_\ell} \alpha + \sigma_{saj_\ell}$, $\ell = l''+1, \ldots, l$ since their breakpoints satisfy $\alpha_{l''} < \alpha_{l''+1} < \ldots < \alpha_l$ due to the induction hypothesis. Since each index $i_k$, $k = 1, \ldots, m$, is pushed onto and popped from the stack at most once, Algorithm~\ref{alg:1norm:dual_graham_scan} runs in linear time $\mathcal{O} (S)$.
\end{proof}

Figures~\ref{fig:norm1_algo}~(b) and~(c) illustrate the induction step in the proof of Lemma~\ref{lem:step_2}: In iteration $k = 3$, the algorithm confirms that the intersection $\alpha_k$ of the line segments $i_2$ and $i_3$ (denoted by $i'$ and $i_k$ in the algorithm description, respectively) lies to the left of the intersection $\alpha'$ of the line segments $i_1$ and $i_2$. Thus, the line segment $i_2$ is removed from the stack. We remark that the point-line duality of projective geometry allows us to interpret Algorithm~\ref{alg:1norm:dual_graham_scan} as a dual version of Graham's scan for finding the convex hull of a finite set of points~\citep{BCKO08:comp_geometry}.

Before we commence with Step~3 of our algorithm, we remove any line segment-endpoint pairs $(j_k, \alpha_k)$, $k = 1, \ldots, n - 1$, whose associated right-sided endpoints $\alpha_{k+1}$ are negative. This intermediate step, which is admissible since any feasible decision $\alpha$ in problem~\eqref{eq:norm_1:projection_problem} is non-negative, leads to the removal of the line segment $j_1$ in Figure~\ref{fig:norm1_algo}~(c) and~(d). To reduce the notational burden, we refer to the remaining supporting line indices and endpoints as $j_1, \ldots, j_n$ and $\alpha_1, \ldots, \alpha_n$, even though some pairs may have been removed and $n$ may have been decreased accordingly. As before, we assume in the following that $n > 1$ as otherwise problem~\eqref{eq:norm_1:projection_problem} can be solved directly.

\begin{algorithm}[tb]
	\KwData{Sorted lists of supporting line indices $j_1, \ldots, j_n$ and breakpoints $\alpha_2, \ldots, \alpha_n$}
	\For{$s' = 1, 2, \ldots, S$}{
		\uIf{$b_{s'} = b_{j_n}$}
		{
			set $\alpha^0_{s'} = +\infty$\;
		}
		\uElseIf{$(b_{s'} - b_{j_n}) \, \alpha_n \leq \sigma_{s a s'} + \sigma_{saj_n}$}
		{
			let $\alpha^0_{s'}$ be the solution to $(b_{s'} - b_{j_n}) \, \alpha = \sigma_{s a s'} + \sigma_{sa j_n}$\;
		}
		\uElseIf{$(b_{s'} - b_{j_1}) \, \alpha_2 \geq \sigma_{s a s'} + \sigma_{saj_1}$}
		{
			let $\alpha^0_{s'}$ be the solution to $(b_{s'} - b_{j_1}) \, \alpha = \sigma_{s a s'} + \sigma_{sa j_1}$\;
		}
		\Else
		{			
			bisect on $\{ j_k \}_{k=2}^{n-1}$ to find the line segment $j_l$ that satisfies: \\
			$\qquad$ \emph{(i)} $(b_{s'} - b_{j_l}) \, \alpha_l \leq \sigma_{sas'} + \sigma_{sa j_l}$ $\quad$ and $\quad$ \emph{(ii)} $(b_{s'} - b_{j_l}) \, \alpha_{l+1} \geq \sigma_{sas'} + \sigma_{sa j_l}$\;
			let $\alpha^0_{s'}$ be the solution to $(b_{s'} - b_{j_l}) \alpha = \sigma_{sas'} + \sigma_{sa j_l}$\;
		}
	}
	\KwResult{Roots $\alpha^0_{s'}$ (or $\alpha^0_{s'} = +\infty$ if no intersection) for all components $s' = 1, \ldots, S$}
	\caption{Additional breakpoints introduced by the $[\cdot]_+$-operator in problem~\eqref{eq:norm_1:projection_problem} \label{alg:1norm:plus_BkPts}}
\end{algorithm}

\begin{figure}[tb]
	\centering \includegraphics[width = \textwidth]{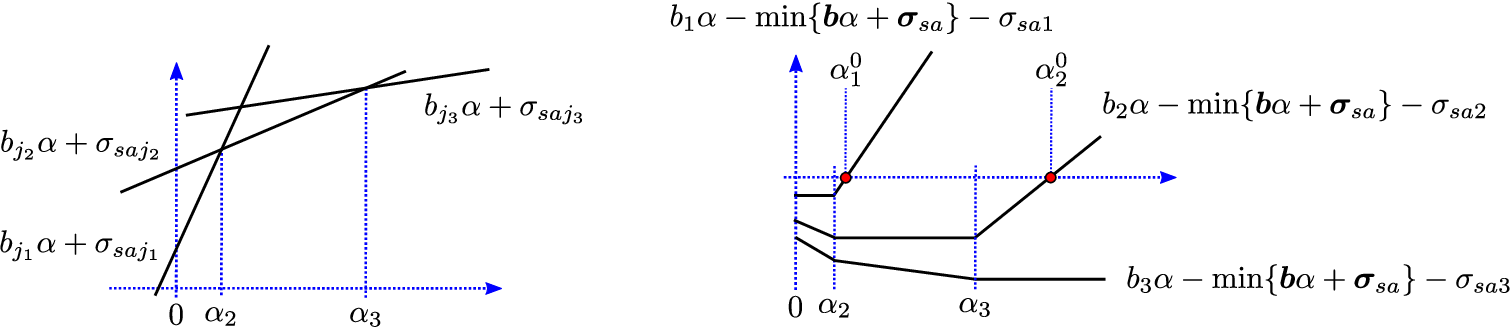}
	\caption{For the expression $\min \{ \bm{b} \alpha + \bm{\sigma}_{sa} \}$ plotted in the left graph, Algorithm~\ref{alg:1norm:plus_BkPts} determines the two additional breakpoints $\alpha^0_1$ and $\alpha^0_2$ in the right graph. \label{fig:norm1_algo-2}}
\end{figure}

In Step~3, we use Algorithm~\ref{alg:1norm:plus_BkPts} to determine for each component $s' \in \mathcal{S}$ of the vector-valued expression $\bm{b} \alpha - \min \{ \bm{b} \alpha + \bm{\sigma}_{sa} \} \cdot \mathbf{e} - \bm{\sigma}_{sa}$ in the objective function of problem~\eqref{eq:norm_1:projection_problem} the additional breakpoint $\alpha^0_{s'}$ introduced by the $[ \cdot ]_+$-operator (or set $\alpha^0_{s'} = + \infty$ if no such breakpoint exists). Algorithm~\ref{alg:1norm:plus_BkPts} is illustrated in Figure~\ref{fig:norm1_algo-2}: For the expression $\min \{ \bm{b} \alpha + \bm{\sigma}_{sa} \}$ plotted in the left graph, the right graph plots the three components of $\bm{b} \alpha - \min \{ \bm{b} \alpha + \bm{\sigma}_{sa} \} \cdot \mathbf{e} - \bm{\sigma}_{sa}$, together with the two additional breakpoints $\alpha^0_{s'}$ determined by the algorithm (the third breakpoint $\alpha^0_3$ is set to $+ \infty$).

\begin{lem}
	Algorithm~\ref{alg:1norm:plus_BkPts} terminates in time $\mathcal{O} (S \log S)$ and returns all roots of the components of $\bm{b} \alpha - \min \{ \bm{b} \alpha + \bm{\sigma}_{sa} \} \cdot \mathbf{e} - \bm{\sigma}_{sa}$ over $\alpha \in \mathbb{R}_+$.
\end{lem}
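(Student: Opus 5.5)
We rely on Lemma~\ref{lem:step_2} and the removal of segments with negative right endpoints. Together they show that on $\alpha \in \mathbb{R}_+$ the function $g(\alpha) = \min \{ \bm{b} \alpha + \bm{\sigma}_{sa} \}$ is concave and piecewise affine. Its pieces are $b_{j_k} \alpha + \sigma_{saj_k}$ on $[\alpha_k, \alpha_{k+1}]$, with slopes $b_{j_1} > \ldots > b_{j_n}$. For a fixed component $s'$, consider $f_{s'}(\alpha) = b_{s'} \alpha - \sigma_{sas'} - g(\alpha)$. It is the sum of an affine function and the convex function $-g$, so it is convex. On the piece $k$ it equals $(b_{s'} - b_{j_k}) \alpha - \sigma_{sas'} - \sigma_{saj_k}$, so a root on that piece solves exactly the linear equation used in Algorithm~\ref{alg:1norm:plus_BkPts}.

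\textbf{Value at the origin.} We have $f_{s'}(0) = -\sigma_{sas'} - \min\{\bm{\sigma}_{sa}\}$ whenever the minimum at $0$ is attained. More precisely, $f_{s'}(0) \le -\sigma_{sas'} < 0$, because $g(0) \geq \min\{\bm\sigma_{sa}\} > 0$. A convex function that is negative at $0$ has at most one root on $\mathbb{R}_+$, namely the point where it crosses from negative to positive. Moreover, the slope of $f_{s'}$ on piece $k$, which is $b_{s'} - b_{j_k}$, is nondecreasing in $k$. Hence the sign pattern of $f_{s'}$ at the breakpoints is monotone: once $f_{s'}(\alpha_l) \ge 0$ at some breakpoint, it stays nonnegative at all later ones. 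This monotonicity is exactly what makes bisection valid.

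\textbf{Case analysis.} The cases follow the branches of the algorithm.
\begin{itemize}
\item If $b_{s'} = b_{j_n}$, then $b_{s'}$ is a minimal slope. In that case $f_{s'}$ is nonincreasing: its slopes $b_{s'} - b_{j_k} \le 0$. So $f_{s'}$ stays negative and $+\infty$ is correct.
\item Otherwise $b_{s'} > b_{j_n}$ and $f_{s'} \to +\infty$, so a unique root exists.
\item If $f_{s'}(\alpha_n) \le 0$ (the second condition of the algorithm, rewritten), the root lies on the last ray.
\item If $f_{s'}(\alpha_2) \ge 0$, the root lies on the first piece, which is $[0, \alpha_2]$ after truncation.
\item Otherwise the root lies in $(\alpha_2, \alpha_n)$. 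Since the sign at the breakpoints is monotone, bisecting on $k$ finds the $l$ with $f_{s'}(\alpha_l) \le 0 \le f_{s'}(\alpha_{l+1})$, and the root solves the linear equation for piece $l$.
\end{itemize}
In every case where it is invoked, the linear solve has a nonzero coefficient, because the sign change of $f_{s'}$ on that piece forces a strictly positive slope.

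\textbf{Running time.} Each of the $S$ iterations takes $\mathcal{O}(1)$ time plus one bisection over at most $n \le S$ indices, which costs $\mathcal{O}(\log S)$. Conditions (i) and (ii) can each be checked in $\mathcal{O}(1)$. This gives $\mathcal{O}(S \log S)$ in total.

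The main obstacle is keeping the boundary cases consistent after the negative-endpoint truncation. In particular, the first piece must now begin at $0$ rather than at $\alpha_1$, and ties must be handled, for instance a root landing exactly at a breakpoint. A further tie case is $f_{s'} \equiv 0$ on a segment. This happens when $s'$ is itself the active line there, in which case $f_{s'}(\alpha) = -2\sigma_{sas'} < 0$, so it does not actually occur. I would handle all of these uniformly through the convexity of $f_{s'}$ and the fact that $f_{s'}(0) < 0$.
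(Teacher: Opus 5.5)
Your proof is correct and follows the same route as the paper. You use the same case split along the four branches of Algorithm~\ref{alg:1norm:plus_BkPts}, take the root in each branch from the affine piece of $\min\{\bm{b}\alpha + \bm{\sigma}_{sa}\}$ supplied by Lemma~\ref{lem:step_2}, and get the same $\mathcal{O}(S \log S)$ count from one bisection per component. What you add is the explicit observation that each $f_{s'}$ is convex with $f_{s'}(0) < 0$. This gives uniqueness of the root on $\mathbb{R}_+$ and the monotone sign pattern at the breakpoints, which justifies both the bisection and the paper's unstated step from non-positivity at $\alpha_n$ to non-positivity on all of $[0,\alpha_n]$.
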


\begin{proof}
	We show that for each state $s' \in \mathcal{S}$, the algorithm correctly identifies the root $\alpha^0_{s'} \geq 0$ satisfying $b_{s'} \alpha^0_{s'} - \min \{ \bm{b} \alpha^0_{s'} + \bm{\sigma}_{sa} \} - \sigma_{sas'} = 0$, or it sets $\alpha^0_{s'} = + \infty$ if no such root exists. To this end, we consider each of the four if-cases inside the loop in Algorithm~\ref{alg:1norm:plus_BkPts} separately.
	
	Consider first the case when $b_{s'} = b_{j_n}$. Note that $b_{i_1} > b_{i_2} > \ldots > b_{i_n}$ after Step~1, and the subsequent steps keep the same order for the indices $j_k$ (although some indices may be removed). Thus, if $b_{s'} = b_{j_n}$, then $b_{s'} \leq b_i$ for all $i \in \mathcal{S}$, and hence the expression $b_{s'} \alpha - \min \{ \bm{b} \alpha + \bm{\sigma}_{sa} \} - \sigma_{sas'}$ is non-positive throughout $\alpha \in \mathbb{R}_+$, implying that indeed $\alpha^0_{s'} = +\infty$.
	
	In the second case, if $(b_{s'} - b_{j_n}) \, \alpha_n \leq \sigma_{s a s'} + \sigma_{saj_n}$, then $b_{s'} \alpha_n - \min \{ \bm{b} \alpha_n + \bm{\sigma}_{sa} \} - \sigma_{s a s'} \leq 0$ since $\min \{ \bm{b} \alpha_n + \bm{\sigma}_{sa} \} = b_{j_n} \alpha_n + \sigma_{sa j_n}$ (\emph{cf.}~Lemma~\ref{lem:step_2}). In other words, the expression $b_{s'} \alpha - \min \{ \bm{b} \alpha + \bm{\sigma}_{sa} \} - \sigma_{s a s'}$ is non-positive for all $\alpha \in [0, \alpha_n]$, and thus the intersection of this expression with $0$ must be located at the ray $b_{s'} \alpha - (b_{j_n} \alpha + \sigma_{sa j_n}) - \sigma_{s a s'}$, $\alpha \in [\alpha_n, +\infty)$ since the slope $b_{s'} - b_{j_n}$ of this ray is positive (for otherwise, $s'$ would have been subsumed by the first case). 
    
    The third if-case treats the situation where the intersection with $0$ is located at the ray $b_{s'} \alpha - (b_{j_1} \alpha + \sigma_{sa j_1}) - \sigma_{s a s'}$, $\alpha \in (-\infty, \alpha_2]$. We skip the argument for this case as it is analogous to the second one.
	
	In the fourth case, finally, we know that the function $b_{s'} \alpha - \min \{ \bm{b} \alpha + \bm{\sigma}_{sa} \} - \sigma_{sas'}$ must intersect $0$ at one of the line segments $b_{s'} \alpha - (b_{j_k} \alpha + \sigma_{sa j_k}) - \sigma_{sas'}$, $\alpha \in [\alpha_k, \alpha_{k+1}]$ and $k = 2, \ldots, n - 1$. We can thus bisect on these segments to find the segment $j_l$ whose left-sided breakpoint $b_{s'} \alpha_l - (b_{j_l} \alpha_l + \sigma_{sa j_l}) - \sigma_{sas'}$ is non-positive and whose right-sided breakpoint $b_{s'} \alpha_{l+1} - (b_{j_l} \alpha_{l+1} + \sigma_{sa j_l}) - \sigma_{sas'}$ is non-negative. Through the exclusion of the previous cases, such a segment $j_l$ must exist.
	
	In the worst case, Algorithm~\ref{alg:1norm:plus_BkPts} performs a bisection over up to $\mathcal{O} (S)$ line segments for each of the $S$ states, which results in the stated complexity bound of $\mathcal{O} (S \log S)$.
\end{proof}

\begin{algorithm}[tb]
	\KwData{Merged list $\alpha'_1, \ldots, \alpha'_k$ of breakpoints and supporting line indices $j_1, \ldots, j_n$}
	\textbf{Initialization:} Set
	$\alpha = 0$, $\;$
	$f = 0$, $\;$
	$\nabla f_1 = \overline{\bm{p}}_{sa}{}^\top \bm{b} - \beta$, $\;$
	$\nabla f_2 = 0$, $\;$
	$\nabla f_{\min} = b_{j_1}$, $\;$
    $\ell = 1$, $\;$
	$\overline{p}_\Sigma = 0$\;
	\For{$i = 1, 2, \ldots, k$}
	{
		~\\[-3mm]
		\textbf{if} $\nabla f_1 - \nabla f_2 \leq 0$ \textbf{then} terminate; \textbf{end} \\
		set $f = f + (\nabla f_1 - \nabla f_2) (\alpha'_i - \alpha)$ and $\alpha = \alpha'_i$\;
		\uIf{breakpoint $\alpha'_i$ is a root $\alpha^0_{s'}$}
		{
			set $\nabla f_2 = \nabla f_2 + \overline{p}_{s a s'} \cdot(b_{s'} - \nabla f_{\min})$ and $\overline{p}_\Sigma = \overline{p}_\Sigma + \overline{p}_{s a s'}$\;
		}
		\ElseIf{breakpoint $\alpha'_i$ is a breakpoint $\alpha_j$}
		{
			set $\ell = \ell + 1$, $\nabla f_2 = \nabla f_2 - (b_{j_{\ell}} - \nabla f_{\min}) \cdot \overline{p}_\Sigma$ and $\nabla f_{\min} = b_{j_\ell}$\;
		}
	}
	\KwResult{Optimal value $f$ of problem~\eqref{eq:norm_1:projection_problem}}
	\caption{Optimal value of problem~\eqref{eq:norm_1:projection_problem} \label{alg:1norm:findopt}}
\end{algorithm}

After Step~3, we sort the roots $\alpha^0_{s'}$, $s' \in \mathcal{S}$ with $\alpha^0_{s'} < +\infty$, and merge the list of breakpoints $\alpha_2, \ldots, \alpha_n$ with the list of roots $\alpha^0_{s'}$ in non-descending order to a new list $\alpha'_1, \ldots, \alpha'_k$. When a breakpoint $\alpha_i$ coincides with a root $\alpha^0_{s'}$, we keep both elements in the list. Both operations can be achieved in $\mathcal{O} (S \log S)$ time. Algorithm~\ref{alg:1norm:findopt} then traverses all breakpoints and roots and determines the optimal objective value of problem~\eqref{eq:norm_1:projection_problem}. In this algorithm, $\alpha$ denotes the current breakpoint or root, $f$ is the objective value of~\eqref{eq:norm_1:projection_problem} evaluated at $\alpha$, $\nabla f_1$ and $\nabla f_2$ denote the slope of the first and second summand in the objective function of~\eqref{eq:norm_1:projection_problem}, respectively, $\nabla f_{\min}$ is the current slope of $\min \{ \bm{b} \alpha + \bm{\sigma}_{sa} \}$, and $\overline{p}_\Sigma = 0$ stores the sum of all $\overline{p}_{sas'}$, $s' \in \mathcal{S}$, that multiply positive components of $[ \bm{b} \alpha - \min \{ \bm{b} \alpha + \bm{\sigma}_{sa} \} \cdot \mathbf{e} - \bm{\sigma}_{sa} ]_+$. Through updates at each breakpoint $\alpha'_i$ and termination once the objective function no longer increases, the algorithm solves problem~\eqref{eq:norm_1:projection_problem} as desired.

\begin{obs}
	Algorithm~\ref{alg:1norm:findopt} terminates in time $\mathcal{O} (S)$ and returns the optimal value of problem~\eqref{eq:norm_1:projection_problem}.
\end{obs}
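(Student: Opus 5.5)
To prove the observation, I will treat Algorithm~\ref{alg:1norm:findopt} as a left-to-right sweep over the merged breakpoint list $\alpha'_1, \ldots, \alpha'_k$. It maintains the exact value and right-derivative of the objective $g(\alpha)$ of problem~\eqref{eq:norm_1:projection_problem}. Correctness then follows from concavity: a concave piecewise affine function on $\mathbb{R}_+$ attains its maximum at the first breakpoint where the right-derivative becomes non-positive. Since $g(0) = (\overline{\bm{p}}_{sa}{}^\top \bm{b}-\beta)\cdot 0 - \overline{\bm{p}}_{sa}{}^\top[\min\{\bm{\sigma}_{sa}\}\mathbf{e}-\bm{\sigma}_{sa}]_+ = 0$, the initialization $f=0$, $\alpha = 0$ is correct.

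\textbf{Runtime.} There are at most $2S-3$ entries in the list, by the earlier observation on breakpoints, or more simply at most $n + S = \mathcal{O}(S)$ entries. Each loop iteration performs a constant number of arithmetic updates, so the sweep takes $\mathcal{O}(S)$ time. The sorting and merging were already accounted for separately.

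\textbf{Invariant.} The central step is to prove by induction on $i$ that, at the start of iteration $i$, the following hold.
\begin{enumerate}
\item[(i)] $f = g(\alpha)$.
\item[(ii)] $\nabla f_{\min}$ is the slope of $\min\{\bm{b}\alpha+\bm{\sigma}_{sa}\}$ on $(\alpha, \alpha'_i)$, namely $b_{j_\ell}$ by Lemma~\ref{lem:step_2}.
\item[(iii)] $\overline{p}_\Sigma = \sum_{s' \in \mathcal{P}(\alpha)} \overline{p}_{sas'}$, where $\mathcal{P}(\alpha) = \{ s' : \alpha^0_{s'} \le \alpha \}$ is the set of components that are positive on that interval.
\item[(iv)] $\nabla f_2 = \sum_{s' \in \mathcal{P}(\alpha)} \overline{p}_{sas'}(b_{s'} - \nabla f_{\min})$.
\end{enumerate}
Together these make $\nabla f_1 - \nabla f_2$ the slope of $g$ on $(\alpha,\alpha'_i)$. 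I would justify (iii) using the previous lemma: each component $b_{s'}\alpha - \min\{\bm{b}\alpha+\bm{\sigma}_{sa}\} - \sigma_{sas'}$ is convex in $\alpha$ and negative at $0$, so it crosses zero at most once. Hence once component $s'$ becomes positive at $\alpha^0_{s'}$, it stays positive.

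The two update types then preserve the invariant. At a root, the new component adds $\overline{p}_{sas'}(b_{s'}-\nabla f_{\min})$ to the slope of the penalty term. At a breakpoint $\alpha_j$, the slope of the minimum changes from $\nabla f_{\min}$ to $b_{j_\ell}$, which changes the slope of every active component by $-(b_{j_\ell}-\nabla f_{\min})$. Summing over the active components gives exactly the stated update $-(b_{j_\ell}-\nabla f_{\min})\overline{p}_\Sigma$.

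\textbf{Main obstacle.} I expect the main difficulty to be handling ties, where a breakpoint coincides with a root and both are kept in the list. Since such points are processed at $\alpha'_i - \alpha = 0$ distance, $f$ is unchanged, and the slope updates commute up to the $\nabla f_{\min}$ used. I would check that processing the root first, with the old $\nabla f_{\min}$, and then the breakpoint, which charges the new $\overline{p}_\Sigma$, yields the correct right-derivative.

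\textbf{Termination.} The loop stops as soon as the slope on the next interval is non-positive. By concavity (problem~\eqref{eq:norm_1:projection_problem} is convex), the current $\alpha$ is then a maximizer and $f$ is the optimal value. If the loop exhausts the list, then on the final unbounded interval the slope must be non-positive, because the problem is bounded: its value equals that of the feasible, bounded LP~\eqref{eq:gen_projection}. So the last $\alpha$ is again optimal. One caveat is that the loop as written never checks the slope after the final breakpoint; I would note that boundedness forces this final slope to be non-positive, so returning the last $f$ is correct.
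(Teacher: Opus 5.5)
Your proposal is correct and follows the paper's approach. The paper supports the observation only with the informal remark that Algorithm~\ref{alg:1norm:findopt} tracks the objective value $f$, the slopes $\nabla f_1, \nabla f_2, \nabla f_{\min}$ and the mass $\overline{p}_\Sigma$ along the merged list, and stops once the objective no longer increases; your invariant-based induction, tie analysis and concavity argument make exactly this rigorous. Your boundedness argument for the never-checked final unbounded segment is a worthwhile addition; one can also verify directly that this last slope equals $\min\{\bm{b}\} - \beta \le 0$. One small correction: the merged list can contain up to $(n-1)+(S-1)$ entries rather than the $2S-3$ objective breakpoints, but your fallback bound of $n+S$ entries already covers this.
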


In conclusion, we thus arrive at the following main result of this section.

\begin{thm}
	For the deviation measure $d_a (\bm{p}_{sa}, \overline{\bm{p}}_{sa}) = \left \lVert \bm{\Sigma}_{sa} (\bm{p}_{sa} - \overline{\bm{p}}_{sa}) \right \rVert_1$, the projection problem~\eqref{eq:gen_projection} can be solved in time $\mathcal{O} (S \log S)$. 
\end{thm}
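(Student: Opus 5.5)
The plan is to assemble the theorem from the results already established in this section, so the proof is mostly bookkeeping. First, the Proposition reduces the optimal value of the projection problem~\eqref{eq:gen_projection} under the weighted $1$-norm to the optimal value of the univariate concave piecewise affine problem~\eqref{eq:norm_1:projection_problem}. This uses LP duality and needs the feasibility condition $\min\{\bm{b}\}\leq\beta$, which we assume as stated at the beginning of the section. It therefore suffices to show that~\eqref{eq:norm_1:projection_problem} can be solved in time $\mathcal{O}(S\log S)$. By the Observation on breakpoints, its objective has at most $2S-3$ breakpoints. Maximizing a concave piecewise affine function over $\mathbb{R}_+$ amounts to scanning these breakpoints from $\alpha=0$ until the slope becomes non-positive.

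Next I would account for the four steps in order.
\begin{itemize}
\item \emph{Step~1} sorts $\bm{b}$ and removes duplicate slopes, keeping the smaller intercept. This costs $\mathcal{O}(S\log S)$ and does not change $\min\{\bm{b}\alpha+\bm{\sigma}_{sa}\}$.
\item \emph{Step~2} runs Algorithm~\ref{alg:1norm:dual_graham_scan}. By Lemma~\ref{lem:step_2}, it returns the supporting lines and breakpoints of $\alpha\mapsto\min\{\bm{b}\alpha+\bm{\sigma}_{sa}\}$ in time $\mathcal{O}(S)$. Discarding segments whose right endpoint is negative costs $\mathcal{O}(S)$.
\item \emph{Step~3} runs Algorithm~\ref{alg:1norm:plus_BkPts}. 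By the subsequent lemma, it finds every root $\alpha^0_{s'}$ of the components inside $[\cdot]_+$ in $\mathcal{O}(S\log S)$. Sorting these roots and merging them with the already sorted breakpoints $\alpha_2,\ldots,\alpha_n$ takes $\mathcal{O}(S\log S)$.
\item \emph{Step~4} runs Algorithm~\ref{alg:1norm:findopt}. By the final Observation, it returns the optimal value in $\mathcal{O}(S)$.
\end{itemize}
The degenerate cases $m=1$ or $n=1$ give an affine expression inside $[\cdot]_+$. The objective then has at most $S$ breakpoints, namely the roots, which can be sorted and scanned within the same bound.

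The only step I expect to need genuine argument is the correctness of Step~4, which is stated as an Observation without proof. There I would verify two invariants by induction over the merged list. First, on each interval between consecutive merged points, $\nabla f_1-\nabla f_2$ equals the true slope of the objective. Second, $\overline{p}_\Sigma$ equals the total nominal mass on the components that are currently positive. The update rules then follow from two facts. When a root $\alpha^0_{s'}$ is crossed, component $s'$ becomes active with slope $b_{s'}-\nabla f_{\min}$. When a breakpoint of the minimum is crossed, every active component's slope changes by $-(b_{j_\ell}-b_{j_{\ell-1}})$.

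Once these invariants hold, concavity from the Proposition guarantees that stopping at the first non-positive slope yields the global maximum. Summing the costs gives $\mathcal{O}(S\log S)$.
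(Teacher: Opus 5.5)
Your proposal is correct and follows the paper's route: the paper obtains the theorem by chaining the dual reformulation~\eqref{eq:norm_1:projection_problem} with Steps~1--4, costing $\mathcal{O}(S\log S)$ for the sorting, Algorithm~\ref{alg:1norm:plus_BkPts} and the merge, and $\mathcal{O}(S)$ for Algorithms~\ref{alg:1norm:dual_graham_scan} and~\ref{alg:1norm:findopt}. Your invariant argument for Algorithm~\ref{alg:1norm:findopt} goes beyond the paper, which states that algorithm's correctness as an unproved Observation; to make it airtight, note two facts: each component $b_{s'}\alpha - \min\{\bm{b}\alpha+\bm{\sigma}_{sa}\} - \sigma_{sas'}$ is convex and negative at $\alpha=0$, hence stays positive once it crosses zero; and the slope beyond the last merged point equals $\min\{\bm{b}\}-\beta\le 0$, so exhausting the list still returns the maximum.
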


Thus, the overall time complexity of the associated value iteration is $\mathcal{O} (S^2 A \cdot \log S \cdot \log [\overline{R} / \epsilon])$. \cite{HPW18:fast_bellman} and \cite{HPW21:ppi} develop a homotopy continuation method for robust MDPs with $(s,a)$-rectangular weighted $1$-norm ambiguity sets, and they extend their method to $s$-rectangular ambiguity sets with a complexity of $\mathcal{O} (S^3 A \cdot \log [S A])$. Our method is faster by a factor of $\mathcal{O} (S)$, as our additional multiplicative factor is independent of the MDP size. This improvement over the homotopy method comes from reducing the complexity of identifying the linear segments of the function $q(\xi)$ in equation~(5.1) of \cite{HPW21:ppi} by a factor of $\mathcal{O} (S)$.

\section{$2$-Norm Ambiguity Sets}\label{sec:2-norm}

We now assume that the deviation measure $d_a$ in the $s$-rectangular ambiguity set~\eqref{eq:additive_s_rect} satisfies $d_a (\bm{p}_{sa}, \overline{\bm{p}}_{sa}) = \left \lVert \bm{\Sigma}_{sa} (\bm{p}_{sa} - \overline{\bm{p}}_{sa}) \right \rVert_2^2$ with $\bm{\Sigma}_{sa} = \text{diag} (\bm{\sigma}_{sa})$ and $\bm{\sigma}_{sa} > \bm{0}$ component-wise. Weighted $2$-norm ambiguity sets generalize ambiguity sets with modified $\chi^2$-distance \citep{HPW22:phi_divergence_mdps}, and they can be calibrated to historical observations of the MDP's transitions via statistical information theory \citep{I05:rdp, NG05:rdp} as well as classical results from Markov chains \citep{WKR13:rmdps}. As in the previous section, we assume that the generalized projection problem is feasible, that is, that $\min \{ \bm{b} \} \leq \beta$.

\begin{prop}\label{prop:2norm}
	For the deviation measure $d_a (\bm{p}_{sa}, \overline{\bm{p}}_{sa}) = \left \lVert \bm{\Sigma}_{sa} (\bm{p}_{sa} - \overline{\bm{p}}_{sa}) \right \rVert_2^2$, the optimal value of~\eqref{eq:gen_projection} equals $0$ if $\bm{b}^\top \overline{\bm{p}}_{sa} \leq \beta$ and
	\begin{equation}\label{eq:q:2norm:resulting_value}
	- \beta \alpha + \gamma + \overline{\bm{p}}_{sa}{}^\top \min \left\{ \bm{b} \alpha - \gamma \mathbf{e}, \, 2 \bm{\Sigma}_{sa}^2 \overline{\bm{p}}_{sa} \right\} - \frac{1}{4} \left \lVert \bm{\Sigma}_{sa}^{-1} \min \left\{ \bm{b} \alpha - \gamma \mathbf{e}, \, 2 \bm{\Sigma}_{sa}^2 \overline{\bm{p}}_{sa} \right\} \right \rVert_2^2
	\end{equation}
	otherwise, where $(\alpha, \gamma) \in \mathbb{R}^2$ is any solution to the (solvable) system of nonlinear equations
	\begin{equation}\label{eq:2norm:nonlinear_eqs}
	\begin{array}{r@{}l}
    \displaystyle \mathbf{e}^\top \bm{\Sigma}_{sa}^{-2} \left[ - \bm{b} \alpha + \gamma \mathbf{e}+ 2 \bm{\Sigma}_{sa}^2 \overline{\bm{p}}_{sa} \right]_+ &= 2 \\
	\displaystyle \bm{b}^\top \bm{\Sigma}_{sa}^{-2} \left[ - \bm{b} \alpha + \gamma \mathbf{e} + 2 \bm{\Sigma}_{sa}^2 \overline{\bm{p}}_{sa} \right]_+ &= 2 \beta .
	\end{array}
	\end{equation}
\end{prop}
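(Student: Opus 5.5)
The plan is to compute the Lagrangian dual of the convex quadratic program~\eqref{eq:gen_projection}, eliminate the primal variable in closed form, and read off~\eqref{eq:2norm:nonlinear_eqs} as the first-order optimality conditions of the resulting concave dual. The case $\bm{b}^\top \overline{\bm{p}}_{sa} \leq \beta$ is trivial, since then $\overline{\bm{p}}_{sa}$ is feasible with objective value $0$. So assume $\bm{b}^\top \overline{\bm{p}}_{sa} > \beta$. Strong duality holds because the problem is a feasible convex QP with linear constraints, and feasibility is guaranteed by $\min\{\bm{b}\} \leq \beta$.

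The dual is built as follows. I dualize the budget constraint $\bm{b}^\top \bm{p} \leq \beta$ with a multiplier $\alpha \geq 0$, the constraint $\mathbf{e}^\top \bm{p} = 1$ with a multiplier $\gamma \in \mathbb{R}$, and I keep $\bm{p} \geq \bm{0}$ explicit. The Lagrangian then separates across components:
\[
\sum_{s'} \Bigl[ \sigma_{s'}^2 (p_{s'} - \overline{p}_{s'})^2 + (\alpha b_{s'} - \gamma) p_{s'} \Bigr] - \alpha\beta + \gamma .
\]
Minimizing each one-dimensional convex quadratic over $p_{s'} \geq 0$ gives
\[
p_{s'}^\star = \Bigl[\overline{p}_{s'} - (\alpha b_{s'} - \gamma)/(2\sigma_{s'}^2)\Bigr]_+ = \tfrac{1}{2}\sigma_{s'}^{-2}\bigl[-b_{s'}\alpha + \gamma + 2\sigma_{s'}^2\overline{p}_{s'}\bigr]_+ .
\]
Substituting back and writing $\bm{y} = \min\{\bm{b}\alpha - \gamma\mathbf{e}, 2\bm{\Sigma}^2\overline{\bm{p}}\}$ should produce, after a routine completion of squares that I will check case by case (interior versus clamped at $0$), exactly the dual function~\eqref{eq:q:2norm:resulting_value}:
\[
g(\alpha,\gamma) = -\beta\alpha + \gamma + \overline{\bm{p}}^\top \bm{y} - \tfrac14\|\bm{\Sigma}^{-1}\bm{y}\|_2^2 .
\]
In the interior case one obtains $\overline{p}y - y^2/(4\sigma^2)$. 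In the clamped case, $y = 2\sigma^2\overline{p}$ and the contribution is $\sigma^2\overline{p}^2$, which matches.

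Next I show that $g$ is concave and continuously differentiable, being a sum of Moreau-envelope-type terms. By Danskin's theorem, $\nabla g$ is given by the primal constraint residuals at $\bm{p}^\star(\alpha,\gamma)$:
\[
\partial_\gamma g = 1 - \mathbf{e}^\top \bm{p}^\star , \qquad \partial_\alpha g = \bm{b}^\top \bm{p}^\star - \beta .
\]
Setting both derivatives to zero gives precisely~\eqref{eq:2norm:nonlinear_eqs}, since $2\bm{p}^\star = \bm{\Sigma}^{-2}[\cdots]_+$. Because $g$ is concave, any stationary point over $\mathbb{R}^2$ is an unconstrained maximizer, and hence its value is at most the value over $\alpha \geq 0$, which by weak duality is at most the primal optimum. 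Conversely, such a stationary point with $\alpha \geq 0$ attains the primal value, since $\bm{p}^\star$ is then primal feasible (with the budget constraint tight) and complementary slackness holds. Thus it suffices that some solution of~\eqref{eq:2norm:nonlinear_eqs} exists with $\alpha \geq 0$, and also that every solution has $\alpha \geq 0$, so that ``any solution'' is legitimate.

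The main obstacle is solvability together with the sign of $\alpha$; I would settle both through the KKT conditions. The primal problem attains its minimum, since its feasible set is compact, and its constraints are linear, so KKT multipliers $(\alpha,\gamma)$ with $\alpha \geq 0$ exist. The minimizer coincides with $\bm{p}^\star(\alpha,\gamma)$ by uniqueness of the separable minimization. The budget constraint must be active, for otherwise $\alpha = 0$ and $\bm{p}^\star$ would be the projection of $\overline{\bm{p}}$ onto the simplex, namely $\overline{\bm{p}}$ itself, contradicting $\bm{b}^\top\overline{\bm{p}} > \beta$. Hence the KKT pair solves~\eqref{eq:2norm:nonlinear_eqs}. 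For an arbitrary solution $(\alpha,\gamma)$, the point $\bm{p}^\star$ is feasible, and $g(\alpha,\gamma) = d(\bm{p}^\star)$ because both residuals vanish. If $\alpha < 0$, then $\bm{p}^\star$ would minimize $d(\bm{p}) + \alpha\bm{b}^\top\bm{p}$ over the simplex; comparing against $\overline{\bm{p}}$ would then give $d(\bm{p}^\star) \leq \alpha(\bm{b}^\top\overline{\bm{p}} - \beta) < 0$, a contradiction. Therefore $\alpha \geq 0$, and the value equals the primal optimum.
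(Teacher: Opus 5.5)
Your proof is correct. It reaches the same dual function and the same stationarity system as the paper, but by a different decomposition.

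\textbf{The paper's route.} The paper first shows that the budget constraint is tight and replaces it by an equality, so $\alpha$ becomes a free multiplier. It then dualizes $\bm{p}_{sa} \geq \bm{0}$ with $\bm{\zeta} \geq \bm{0}$ and minimizes over all of $\mathbb{R}^S$, which gives an affine primal map. Finally it eliminates $\bm{\zeta}$ by partial maximization, $\bm{\zeta}^\star = [\bm{b}\alpha - \gamma\mathbf{e} - 2\bm{\Sigma}_{sa}^2\overline{\bm{p}}_{sa}]_+$, which is where the $\min\{\cdot,\cdot\}$ terms come from.

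\textbf{Your route.} You keep $\bm{p} \geq \bm{0}$ inside the inner minimization. This is exactly the alternative the paper's proof mentions and declines because $\bm{p}^\star(\alpha,\gamma)$ is nonsmooth. You correctly see that this nonsmoothness does not carry over to the dual. Each coordinate term $y\overline{p} - y^2/(4\sigma^2)$ has zero slope at the clamping point $y = 2\sigma^2\overline{p}$, so $g$ is $C^1$ with gradient equal to the primal residuals. Checking this from the explicit formula is cleaner than invoking Danskin over the non-compact set $\mathbb{R}^S_+$.

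\textbf{Cost and benefit.} Keeping the inequality means you need a separate argument that every root has $\alpha \geq 0$. Your comparison with $\overline{\bm{p}}_{sa}$ supplies it, and in fact gives $\alpha > 0$. The paper's equality reformulation makes the sign irrelevant. In return, your route makes ``solvable'' and ``any solution'' rigorous:
\begin{itemize}
\item existence follows from primal attainment plus KKT for linear constraints;
\item at every root, $g(\alpha,\gamma) = d_a(\bm{p}^\star,\overline{\bm{p}}_{sa})$.
\end{itemize}
The paper, by contrast, only argues that dual optimal solutions satisfy the system, and leaves dual attainment and the converse implicit. Your argument also certifies $\alpha \geq 0$ for all roots, which the paper later invokes in Section~\ref{sec:2norm:joint_algorithm} to restrict the breakpoint search to $\alpha \geq 0$.

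\textbf{One sentence to fix.} A stationary point of the concave $g$ on $\mathbb{R}^2$ has value \emph{at least} the maximum over $\alpha \geq 0$. So your ``at most'' holds only once $\alpha \geq 0$ is known. Your final paragraph establishes exactly that, so the argument stands.
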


The equation system~\eqref{eq:2norm:nonlinear_eqs} may have multiple solutions. If $S = 2$, $\bm{b} = (3/4, 1/4)^\top$, $\beta = 1/4$, $\bm{\Sigma} = \bm{I}$ and $\overline{\bm{p}}_{sa} = \mathbf{e} / 2 \in \Delta_2$, for example, any $\alpha \geq 4$ and $\gamma = \alpha / 4 + 1$ solve~\eqref{eq:2norm:nonlinear_eqs}. In such cases, however, the expression~\eqref{eq:q:2norm:resulting_value} attains the same value for all admissible combinations of $\alpha$ and $\gamma$.

\begin{proof}[Proof of Proposition~\ref{prop:2norm}]
	For the stated deviation measure, the projection problem~\eqref{eq:gen_projection} becomes
	\begin{equation}\label{eq:2norm:intermediate_prob}
	\begin{array}{l@{\quad}l}
	\text{minimize} & \displaystyle \left \lVert \bm{\Sigma}_{sa} (\bm{p}_{sa} - \overline{\bm{p}}_{sa}) \right \rVert_2^2 \\
	\text{subject to} & \displaystyle \bm{b}^\top \bm{p}_{sa} \leq \beta \\
	& \displaystyle \bm{p}_{sa} \in \Delta_S.
	\end{array}
	\end{equation}
	If $\bm{b}^\top \overline{\bm{p}}_{sa} \leq \beta$, then problem~\eqref{eq:2norm:intermediate_prob} is solved by $\bm{p}^\star_{sa} = \overline{\bm{p}}_{sa}$ and its optimal value is $0$. We now assume that $\bm{b}^\top \overline{\bm{p}}_{sa} > \beta$, in which case any optimal solution $\bm{p}^\star_{sa}$ to~\eqref{eq:2norm:intermediate_prob} must satisfy $\bm{b}^\top \bm{p}^\star_{sa} = \beta$ as otherwise convex combinations of $\bm{p}^\star_{sa}$ and $\overline{\bm{p}}_{sa}$ would allow us to further reduce the objective value of~\eqref{eq:2norm:intermediate_prob}.
	
	Our previous reasoning allows us to study the variant of problem~\eqref{eq:2norm:intermediate_prob} that replaces the inequality $\bm{b}^\top \bm{p}_{sa} \leq \beta$ with an equality. The Lagrange dual function associated with this problem is
	\begin{equation*}
	g (\alpha, \gamma, \bm{\zeta}) = \inf \left\{ \left \lVert \bm{\Sigma}_{sa} (\bm{p}_{sa} - \overline{\bm{p}}_{sa}) \right \rVert_2^2 + \alpha (\bm{b}^\top \bm{p}_{sa} - \beta) + \gamma (1 - \mathbf{e}^\top \bm{p}_{sa}) - \bm{\zeta}^\top \bm{p}_{sa} \, : \, \bm{p}_{sa} \in \mathbb{R}^S \right\},
	\end{equation*}
	where $\alpha, \gamma \in \mathbb{R}$ and $\bm{\zeta} \in \mathbb{R}^S_+$. The first-order unconstrained optimality condition implies that the infimum in this expression is attained by
	\begin{equation}\label{eq:2norm:optimal_p}
	\bm{p}_{sa} = \overline{\bm{p}}_{sa} + \frac{1}{2} \bm{\Sigma}_{sa}^{-2} (- \alpha \bm{b} + \gamma \mathbf{e} + \bm{\zeta}).
	\end{equation}
	Note that the dual variables $\bm{\zeta} \in \mathbb{R}^S_+$ associated with the non-negativity constraints of $\bm{p}_{sa} \in \Delta_S$ could be avoided by taking the infimum over all non-negative vectors $\bm{p}_{sa} \in \mathbb{R}^S_+$ in the Lagrange dual; the associated first-order \emph{constrained} optimality condition would lead to a non-smooth dependence of $\bm{p}_{sa}$ on $\alpha$ and $\gamma$, however, which would complicate our argument below.

	Substituting the expression~\eqref{eq:2norm:optimal_p} into the Lagrange dual function, we obtain the dual to~\eqref{eq:2norm:intermediate_prob}:
	\begin{equation}\label{eq:2norm:dual_prob}
	\begin{array}{l@{\quad}l}
	\text{maximize} & \displaystyle - \beta \alpha + \gamma + \overline{\bm{p}}_{sa}{}^\top (\alpha \bm{b} - \gamma \mathbf{e} - \bm{\zeta}) - \frac{1}{4} \left \lVert \bm{\Sigma}_{sa}^{-1} (\alpha \bm{b} - \gamma \mathbf{e} - \bm{\zeta}) \right \rVert_2^2 \\
	\text{subject to} & \displaystyle \alpha, \gamma \in \mathbb{R}, \;\; \bm{\zeta} \in \mathbb{R}^S_+
	\end{array}
	\end{equation}
	Strong duality holds since the dual problem is strictly feasible by construction. In the remainder, we argue that any optimal solution $(\alpha^\star, \gamma^\star, \bm{\zeta}^\star)$ to this problem is defined through the nonlinear equation system~\eqref{eq:2norm:nonlinear_eqs}, which will complete the proof.
	
	The first-order \emph{unconstrained} optimality conditions for $\alpha$, $\gamma$ and $\bm{\zeta}$ in~\eqref{eq:2norm:dual_prob} are, respectively,
	\begin{subequations}
	\begin{align}
	- \beta + \bm{b}^\top \overline{\bm{p}}_{sa} - \frac{1}{2} \bm{b}^\top \bm{\Sigma}_{sa}^{-2} (\bm{b} \alpha - \gamma \mathbf{e} - \bm{\zeta}) &= 0 \label{eq:2norm:unconstr_opt_conds_1} \\
	1 - \mathbf{e}^\top \overline{\bm{p}}_{sa} - \frac{1}{2} \mathbf{e}^\top \bm{\Sigma}_{sa}^{-2} (- \bm{b} \alpha + \gamma \mathbf{e} + \bm{\zeta}) &= 0 \label{eq:2norm:unconstr_opt_conds_2} \\
	- \overline{\bm{p}}_{sa} + \frac{1}{2} \bm{\Sigma}_{sa}^{-2} (\bm{b} \alpha - \gamma \mathbf{e} - \bm{\zeta}) &= 0.
	\end{align}
	\end{subequations}
	Note that the objective function is additively separable in the components of $\bm{\zeta}$. We thus conclude that the optimal value of $\bm{\zeta}$, which is restricted to the non-negative orthant, must satisfy $\bm{\zeta}^\star = [\bm{b} \alpha^\star - \gamma^\star \mathbf{e} - 2 \bm{\Sigma}_{sa}^2 \overline{\bm{p}}_{sa}]_+$. Substituting this expression into the first two optimality conditions~\eqref{eq:2norm:unconstr_opt_conds_1} and~\eqref{eq:2norm:unconstr_opt_conds_2} as well as the objective function of problem~\eqref{eq:2norm:dual_prob}, respectively, yields the nonlinear equation system~\eqref{eq:2norm:nonlinear_eqs} as well as the expression~\eqref{eq:q:2norm:resulting_value}.
\end{proof}


We next develop algorithms that jointly solve the system of nonlinear equations~\eqref{eq:2norm:nonlinear_eqs}. We first determine the complete solution set for each equation in~\eqref{eq:2norm:nonlinear_eqs} individually (Section~\ref{sec:2norm:indiv_algorithm}). To this end, note that each of the two equations in~\eqref{eq:2norm:nonlinear_eqs} can be expressed as
\begin{equation}\label{eq:2norm:nonlinear_genericform}
    \bm{a}^\top [ -\bm{b} \alpha + \gamma \mathbf{e} + \bm{c} ]_+
    \; = \;
    \rho,
\end{equation}
where $\bm{a}, \bm{b}, \bm{c} \geq \bm{0}$ and $\rho > 0$. We furthermore assume that $\bm{a} > \mathbf{0}$ component-wise as indices $s \in \mathcal{S}$ with $a_s = 0$ have no impact on the solution set of equation~\eqref{eq:2norm:nonlinear_genericform}. To compute a solution $(\alpha^\star, \gamma^\star)$ that satisfies both equations in~\eqref{eq:2norm:nonlinear_eqs} simultaneously, finally, we compute the intersection of the two individual solution sets in an efficient way (Section~\ref{sec:2norm:joint_algorithm}).

\subsection{Solution Set of the Nonlinear Equation~\eqref{eq:2norm:nonlinear_genericform}}\label{sec:2norm:indiv_algorithm}

We simplify the exposition of our algorithm by making the following regularity assumption. 

\begin{assm}\label{assm:2norm:simple_algo}
    The parameters $\bm{a}$, $\bm{b}$, $\bm{c}$ and $\rho$ in~\eqref{eq:2norm:nonlinear_genericform} satisfy the following conditions.
    \begin{itemize}
        \item[\emph{(i)}] For all $s, t \in \mathcal{S}$ with $s \neq t$, we have $b_s \neq b_t$.
        \item[\emph{(ii)}] For every set $\mathcal{S}' \subseteq \mathcal{S}$ and $s \in \mathcal{S}$, we have $\rho + \sum_{s' \in \mathcal{S}'} a_{s'} (-c_{s'} + c_s) \neq 0$.
        \item[\emph{(iii)}] For every set $\mathcal{S}' \subseteq \mathcal{S}$ and $s, t \in \mathcal{S}$, $\rho + \sum_{s' \in \mathcal{S}'} a_{s'}  (b_{s'} - b_s) \left( \alpha_{s,t} - \alpha_{s,s'} \right) \neq 0$, where $\alpha_{ij} = (c_i - c_j) / (b_i - b_j)$ denotes the intersection of the lines $b_i \alpha - c_i$ and $b_j \alpha - c_j$, $i, j \in \mathcal{S}$.
    \end{itemize}
\end{assm}

As we will see below, the first condition in Assumption~\ref{assm:2norm:simple_algo} implies that for $\alpha$ sufficiently large, equation~\eqref{eq:2norm:nonlinear_genericform} reduces to $a_s [ -b_s \alpha + \gamma + c_s ]_+ = \rho$, where $s \in \mathcal{S}$ is the index whose associated component $b_s$ of $\bm{b}$ is the smallest. This allows us to readily identify the optimal solutions $(\alpha^\star, \gamma^\star)$ when $\alpha^\star$ is sufficiently large. The second condition in Assumption~\ref{assm:2norm:simple_algo} ensures that except for finitely many breakpoints $\alpha \in \mathbb{R}$, each component $-b_s \alpha + \gamma + c_s$ in~\eqref{eq:2norm:nonlinear_genericform} is either positive or negative. The third condition in Assumption~\ref{assm:2norm:simple_algo}, finally, ensures that no two different components $s, t \in \mathcal{S}$, $s \neq t$, in~\eqref{eq:2norm:nonlinear_genericform} vanish at the same value of $\alpha \in \mathbb{R}$. Together, these three conditions simplify the bookkeeping of our algorithm.

We emphasize that Assumption~\ref{assm:2norm:simple_algo} is not needed \emph{per se}, but it simplifies the exposition and analysis of our algorithm by eliminating tedious but otherwise straightforward corner cases. Moreover, one can readily show that when the conditions in Assumption~\ref{assm:2norm:simple_algo} are violated, they can thus always be satisfied by applying an arbitrarily small perturbation to the parameters $\bm{a}$, $\bm{b}$, $\bm{c}$ and $\rho$ that in turn has an arbitrarily small impact on the accuracy of the solution set satisfying equation~\eqref{eq:2norm:nonlinear_genericform}.

%

\begin{prop}\label{prop:gamma_star_alpha}
    For every $\alpha \in \mathbb{R}$, there is a unique $\gamma^\star (\alpha) \in \mathbb{R}$ such that $\bm{a}^\top \left[ -\bm{b} \alpha + \gamma^\star (\alpha) \mathbf{e} + \bm{c} \right]_+ = \rho$. Moreover, the function $\gamma^\star$ is piecewise affine, monotonically non-decreasing and concave. 
\end{prop}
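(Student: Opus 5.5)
Fix $\alpha$ and define $\varphi_\alpha(\gamma) = \bm{a}^\top [ -\bm{b} \alpha + \gamma \mathbf{e} + \bm{c} ]_+$ as a function of $\gamma$. I would first establish existence and uniqueness of $\gamma^\star(\alpha)$ by studying $\varphi_\alpha$. Each summand $a_s [ -b_s\alpha + \gamma + c_s ]_+$ is continuous, convex and non-decreasing in $\gamma$. It vanishes for $\gamma \le b_s\alpha - c_s$ and has slope $a_s > 0$ beyond. Hence $\varphi_\alpha$ equals $0$ for $\gamma \le \min_s \{b_s \alpha - c_s\}$. It is strictly increasing (with slope at least $\min_s a_s > 0$) on $[\min_s\{b_s\alpha - c_s\}, \infty)$ and tends to $+\infty$. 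Since $\rho > 0$, the intermediate value theorem gives a solution, and strict monotonicity on the region where $\varphi_\alpha > 0$ gives uniqueness.

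Next, I would show that $\gamma^\star$ is piecewise affine. The joint map $(\alpha,\gamma) \mapsto \bm{a}^\top[\cdot]_+$ is piecewise affine, with finitely many linear pieces indexed by the active set $\mathcal{S}'(\alpha,\gamma) = \{ s : -b_s\alpha + \gamma + c_s > 0\}$. On a region with a fixed active set $\mathcal{S}'$, the equation reads $\sum_{s\in\mathcal{S}'} a_s(-b_s\alpha + \gamma + c_s) = \rho$. This yields
\[
\gamma^\star(\alpha) = \frac{\rho + \sum_{s \in \mathcal{S}'} a_s (b_s \alpha - c_s)}{\sum_{s\in\mathcal{S}'} a_s},
\]
which is affine in $\alpha$ with slope equal to the $a$-weighted average of $\{b_s\}_{s\in\mathcal{S}'}$. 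I would argue that $\gamma^\star$ is continuous, for example via the implicit function argument with strict monotonicity in $\gamma$ or directly from uniqueness plus continuity of $\varphi$. Together with finitely many active sets, this gives piecewise affinity. The same slope formula shows the slope is non-negative because $\bm{b} \ge \bm{0}$, so $\gamma^\star$ is non-decreasing.

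For concavity, the cleanest route avoids tracking how active sets change, which I expect to be the main obstacle. The function $F(\alpha,\gamma) = \bm{a}^\top[-\bm{b}\alpha + \gamma\mathbf{e} + \bm{c}]_+$ is jointly convex, since it is a nonnegative combination of convex functions of affine maps. Take $\alpha_1,\alpha_2$ and $\theta\in[0,1]$, and set $\bar\alpha = \theta\alpha_1 + (1-\theta)\alpha_2$ and $\bar\gamma = \theta\gamma^\star(\alpha_1) + (1-\theta)\gamma^\star(\alpha_2)$. Joint convexity gives $F(\bar\alpha, \bar\gamma) \le \theta\rho + (1-\theta)\rho = \rho = F(\bar\alpha, \gamma^\star(\bar\alpha))$. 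Since $F(\bar\alpha,\cdot)$ is non-decreasing, and strictly increasing wherever it is positive, this forces $\bar\gamma \le \gamma^\star(\bar\alpha)$. If $F(\bar\alpha,\bar\gamma) = 0$, then $\bar\gamma$ lies below the threshold where $F$ becomes positive, and $\gamma^\star(\bar\alpha)$ lies above it because $\rho > 0$. Otherwise strict monotonicity applies. In either case $\gamma^\star$ is concave. As a cross-check, at a breakpoint where an index $t$ with $b_t\alpha - c_t$ crossing enters the active set, the average slope moves toward $b_t$; concavity is consistent with entering indices having smaller $b_t$ as $\alpha$ grows. The convexity argument makes this bookkeeping unnecessary.
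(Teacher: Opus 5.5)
Your proof is correct, and your existence/uniqueness argument is the same as the paper's. For the structural properties you take a different, more hands-on route. The paper gets all three at once. It notes that $\gamma^\star(\alpha)$ is the optimal value of the linear program $\max\{\gamma : \bm{a}^\top\bm{\theta}\le\rho,\ \bm{\theta}\ge -\bm{b}\alpha+\gamma\mathbf{e}+\bm{c},\ \bm{\theta}\ge\mathbf{0}\}$. The right-hand side of this program depends affinely on $\alpha$, and monotonically because $\bm{b}\ge\mathbf{0}$, so standard parametric LP results apply.

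You instead prove concavity directly from the convexity of the sublevel set $\{(\alpha,\gamma) : F(\alpha,\gamma)\le\rho\}$. That set is exactly the projection of the paper's LP feasible region onto the $(\alpha,\gamma)$-plane, so the underlying mechanism is the same. You then get piecewise affinity and monotonicity from the active-set formula. Its slope $\sum_{s\in\mathcal{S}'}a_sb_s/\sum_{s\in\mathcal{S}'}a_s$ is precisely the quantity $m_t$ that drives Algorithm~\ref{alg:2norm:gamma_form}.

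The paper's version is shorter but outsources the work to a citation, and leaves the (easy) identification of the LP value with the root $\gamma^\star(\alpha)$ to the reader. Yours is self-contained and ties in directly with the algorithm.

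One step of yours needs an extra sentence. Write $g_{\mathcal{S}'}(\alpha)=\bigl(\rho+\sum_{s\in\mathcal{S}'}a_s(b_s\alpha-c_s)\bigr)/\sum_{s\in\mathcal{S}'}a_s$ for the candidate affine functions. ``Finitely many active sets plus continuity'' gives finitely many affine pieces only after you observe that distinct $g_{\mathcal{S}'}$ cross at finitely many points. Between consecutive crossings, connectedness then forces a single candidate.

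You can avoid this altogether. Because $\bm{a}>\mathbf{0}$, you have $F(\alpha,\gamma)=\max_{\mathcal{S}'\subseteq\mathcal{S}}\sum_{s\in\mathcal{S}'}a_s(-b_s\alpha+\gamma+c_s)$. Hence $\gamma^\star(\alpha)=\min_{\emptyset\neq\mathcal{S}'\subseteq\mathcal{S}}g_{\mathcal{S}'}(\alpha)$, a minimum of finitely many affine functions with non-negative slopes, which gives all three properties in one line.
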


\begin{proof}
    To see the existence of $\gamma^\star (\alpha)$, define the function $f (\gamma) = \bm{a}^\top [ -\bm{b} \alpha + \gamma \mathbf{e} + \bm{c} ]_+$ for any fixed $\alpha \in \mathbb{R}$. One readily observes that $f (\gamma)$ vanishes for $\gamma \in \mathbb{R}$ sufficiently small, that $f (\gamma) > \rho$ for $\gamma \in \mathbb{R}$ sufficiently large, and that $f (\gamma)$ is continuous. As for the uniqueness of $\gamma^\star (\alpha)$, we first note that any $\gamma \in \mathbb{R}$ satisfying $f (\gamma) = \rho$ must imply that $f (\gamma) > 0$ since $\rho > 0$. One readily observes, however, that $f (\gamma)$ is strictly monotonically increasing over $\{ \gamma \in \mathbb{R} \, : \, f (\gamma) > 0 \}$.
    
    To see that $\gamma^\star$ is piecewise affine, monotonically non-decreasing and concave, we observe that $\gamma^\star$ can be interpreted as the optimal value function of the parametric linear program
    \begin{equation*}
        \alpha \;\; \mapsto \;\;
        \left[
	    \begin{array}{l@{\quad}l}
	        \text{maximize} & \displaystyle \gamma \\
	        \text{subject to} & \displaystyle \bm{a}^\top \bm{\theta} \leq \rho \\
	        & \displaystyle \bm{\theta} \geq -\bm{b} \alpha + \gamma \mathbf{e}  + \bm{c} \\
	        & \displaystyle \gamma \in \mathbb{R}, \;\; \bm{\theta} \in \mathbb{R}_+^S.
	    \end{array}
	    \right],
	\end{equation*}
	and the stated properties then directly follow from standard results in quantitative stability analysis of linear programs, see, \emph{e.g.}, \citet[\S 29]{R97:convex_analysis}.
\end{proof}

\begin{figure}[tb]
	\centering \includegraphics[width = 0.5 \textwidth]{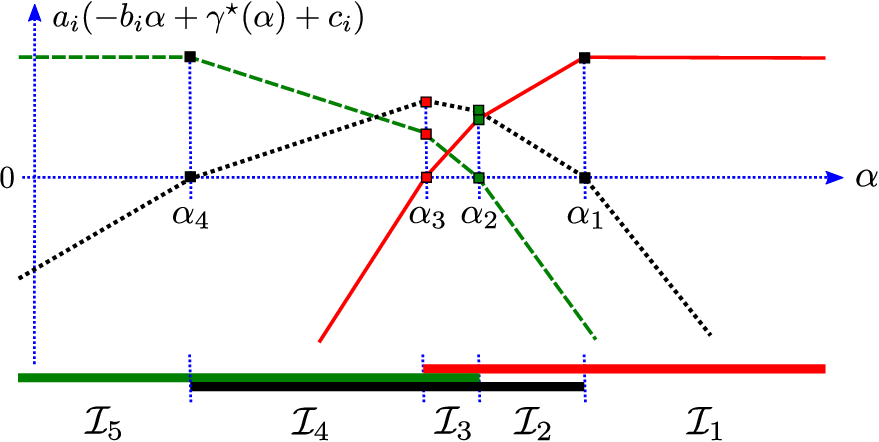}
	\caption{For the three component functions $a_i (- b_i \alpha + \gamma^\star (\alpha) + c_i)$ in solid red, dotted black and dashed green, Algorithm~\ref{alg:2norm:gamma_form} computes the solution set of equation~\eqref{eq:2norm:nonlinear_genericform} in $5$ iterations with the breakpoints $\alpha_1, \ldots, \alpha_4$. The colored bars below the graph indicate which component function indices $i$ are contained in each set $\mathcal{I}_t$. \label{fig:norm2_algo-1}}
\end{figure}

Algorithm~\ref{alg:2norm:gamma_form} iteratively computes the complete solution set of equation~\eqref{eq:2norm:nonlinear_genericform} by tracing the real line from $\alpha \rightarrow +\infty$ to $\alpha \rightarrow -\infty$. In each iteration $t$, the set $\mathcal{I}_t$ comprises the indices of the component functions $a_i (- b_i \alpha + \gamma^\star (\alpha) + c_i)$ that are positive on the interval $(\alpha_t, \alpha_{t - 1})$. The knowledge of this set renders the solution of the nonlinear equation~\eqref{eq:2norm:nonlinear_genericform} straightforward. Breakpoints occur between subsequent iterations when one of the component functions switches sign and thus the set $\mathcal{I}_t$ needs to be updated. Since each component function $a_i (- b_i \alpha + \gamma^\star (\alpha) + c_i)$ is concave and---by the proof of Theorem~\ref{thm:2norm_simple_alg_works}---never vanishes on an interval of positive width, the number of sign switches (and thus the number of algorithm iterations) is limited by $2S$. Figure~\ref{fig:norm2_algo-1} illustrates the algorithm on an instance of equation~\eqref{eq:2norm:nonlinear_genericform} with $S = 3$ component functions.

\begin{algorithm}[tb]
	\KwData{$\bm{a} > \bm{0}$, $\bm{b}, \bm{c} \geq \bm{0}$ and $\rho > 0$.}
	\textbf{Initialization:} Sort $\{ b_s \}_{s = 1}^S$ in ascending order and update $\bm{a}$ and $\bm{c}$ accordingly. \\ $\mspace{122mu}$ Set $\mathcal{I}_1 = \{ 1 \}$ and $\alpha_0 = \infty$. \\
	\For{$t = 1, 2, \ldots$}
	{
		~\\[-3mm]
		Define the affine function
		\begin{equation*}
		    \gamma_t (\alpha)
		    \; = \;
		    \frac{\rho + \sum_{s \in \mathcal{I}_t} a_s (b_s \alpha - c_s) }{\sum_{s \in \mathcal{I}_t} a_s}
		    \; =: \;
		    m_t \alpha + v_t.
		\end{equation*}		
		~\\[-3mm]		
		Compute the breakpoint
		\begin{equation*}
		    \alpha_t = \max \Big\{ \alpha \in [-\infty, \, \alpha_{t - 1}) \, : \, -b_s \alpha + \gamma_t (\alpha) + c_s = 0 \text{ for some $s \in \mathcal{S}$} \Big\}.
		\end{equation*}
		~$\mspace{-16mu}$\uIf{$\alpha_t > -\infty$}
		{
		    Let $s_t \in \mathcal{S}$ be the index satisfying $-b_{s_t} \alpha_t + \gamma_t (\alpha_t) + c_{s_t} = 0$\;
			\textbf{if} $s_t  \in \mathcal{I}_t$ \textbf{then} set $\mathcal{I}_{t + 1} = \mathcal{I}_t \setminus \{ s_t \}$ \textbf{else} set $\mathcal{I}_{t + 1} = \mathcal{I}_t \cup \{ s_t \}$ \textbf{end}\;
		}
		\Else
		{
			Set $T = t$ and terminate\;
		}
	}
	\KwResult{List of $(\alpha_t, m_t, v_t)$, $t = 1, \ldots, T$, such that $\bm{a}^\top \left[ -\bm{b} \alpha + [m_t \alpha + v_t] \mathbf{e} + \bm{c} \right]_+ = \rho$ for all $\alpha \in [\alpha_t, \alpha_{t-1}]$ and all $t = 1, \ldots, T$.}
	\caption{Complete solution set for equation~\eqref{eq:2norm:nonlinear_genericform} \label{alg:2norm:gamma_form}}
\end{algorithm}

\begin{thm}\label{thm:2norm_simple_alg_works}
    Algorithm~\ref{alg:2norm:gamma_form} terminates in time $\mathcal{O} (S^2)$ with an output satisfying $(\alpha_0, \alpha_T) = (\infty, -\infty)$ and $\bm{a}^\top \left[ -\bm{b} \alpha + [m_t \alpha + v_t] \mathbf{e} + \bm{c} \right]_+ = \rho$ for all $\alpha \in [\alpha_t, \alpha_{t-1}]$ and all $t = 1, \ldots, T$, where $T \leq 2S$.
\end{thm}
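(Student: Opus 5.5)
We prove Theorem~\ref{thm:2norm_simple_alg_works} by induction over the iterations $t$, maintaining the invariant that on $(\alpha_t, \alpha_{t-1})$ we have $\gamma^\star(\alpha) = \gamma_t(\alpha)$ and that $\mathcal{I}_t$ is exactly the set of indices $s$ with $-b_s\alpha + \gamma^\star(\alpha) + c_s > 0$ there. Here $\gamma^\star$ is the unique function from Proposition~\ref{prop:gamma_star_alpha}.

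\textbf{Base case.} The base case concerns $\alpha \to +\infty$. After sorting, $b_1 < b_s$ for all $s \neq 1$ by Assumption~\ref{assm:2norm:simple_algo}(i). Put $\gamma_1(\alpha) = b_1\alpha - c_1 + \rho/a_1$. Then component $1$ equals $\rho/a_1 > 0$. Every other component equals $-(b_s-b_1)\alpha + c_s - c_1 + \rho/a_1$, which is negative for $\alpha$ large. Hence $\mathcal{I}_1 = \{1\}$ and $\gamma_1$ solves the equation on $(\alpha_1, \infty)$. By the uniqueness in Proposition~\ref{prop:gamma_star_alpha}, $\gamma_1 = \gamma^\star$ there.

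\textbf{Inductive step.} Suppose $\mathcal{I}_t$ is the positive set on $(\alpha_t,\alpha_{t-1})$. Then the equation reads $\sum_{s\in\mathcal{I}_t} a_s(-b_s\alpha+\gamma+c_s) = \rho$, which is solved by the affine $\gamma_t$. By uniqueness, $\gamma^\star = \gamma_t$ on any interval where the sign pattern does not change. Each component $-b_s\alpha + \gamma_t(\alpha) + c_s$ is affine on this interval, so the sign pattern can change only at a zero of some component. The largest such zero below $\alpha_{t-1}$ is exactly $\alpha_t$. Assumption~\ref{assm:2norm:simple_algo}(iii) guarantees that $s_t$ is unique. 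Assumption~\ref{assm:2norm:simple_algo}(ii) guarantees that the component does not vanish identically, so it genuinely crosses zero. Hence toggling $s_t$ gives the correct set $\mathcal{I}_{t+1}$. Continuity of $\gamma^\star$ at $\alpha_t$ ensures that $\gamma_{t+1}$ agrees with $\gamma_t$ there, so the pieces glue together and the equation holds on the closed intervals $[\alpha_t,\alpha_{t-1}]$. If no zero exists, then $\alpha_t = -\infty$ and the algorithm has covered all of $\mathbb{R}$. This gives $(\alpha_0,\alpha_T) = (\infty,-\infty)$.

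\textbf{Bound $T \le 2S$.} This is the main step. Each global function $g_s(\alpha) = -b_s\alpha + \gamma^\star(\alpha) + c_s$ is concave, because $\gamma^\star$ is concave by Proposition~\ref{prop:gamma_star_alpha}. A concave function that does not vanish on an interval of positive width has at most two zeros. Such flat stretches are excluded, since vanishing on an interval would make $\alpha_t$ ill-defined; this contradicts (ii)/(iii), whose role is to rule out degenerate flat or simultaneous zeros. Every iteration except the last consumes one zero of some $g_s$. Component $1$ is positive at the start and contributes at most one more toggle. Summing gives $T - 1 \le 2S - 1$, hence $T \le 2S$. I expect the main obstacle to be making the no-interval-vanishing argument rigorous from the perturbation assumptions; I would try to derive it from (ii) directly.

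\textbf{Running time.} Each iteration updates $m_t, v_t$ in $\mathcal{O}(1)$ by adding or removing one term from the two sums. It then computes all $S$ candidate zeros $\alpha = (v_t + c_s)/(b_s - m_t)$, takes the largest one below $\alpha_{t-1}$, and does so in $\mathcal{O}(S)$. The initial sort costs $\mathcal{O}(S\log S)$. With $\mathcal{O}(S)$ iterations the total is $\mathcal{O}(S^2)$.
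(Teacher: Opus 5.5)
Your overall structure matches the paper's proof. You use induction on the active set, the same base case, the concavity-based zero count, and the same $\mathcal{O}(S)$ per-iteration bookkeeping. Your remark that component $1$ can vanish at most once slightly sharpens the paper's count and makes $T-1\le 2S-1$ exact.

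The gap is in the inductive step, at ``it genuinely crosses zero. Hence toggling $s_t$ gives the correct set $\mathcal{I}_{t+1}$.'' Assumption~\ref{assm:2norm:simple_algo}~(ii) only tells you that the \emph{old} affine function $(m_t-b_{s_t})\alpha+v_t+c_{s_t}$ changes sign at $\alpha_t$. Below $\alpha_t$, however, the candidate is $\gamma_{t+1}$, whose slope $m_{t+1}$ differs from $m_t$ precisely because $s_t$ was toggled. Nothing you wrote shows that $-b_{s_t}\alpha+\gamma_{t+1}(\alpha)+c_{s_t}$ has the sign matching the new membership of $s_t$ just below $\alpha_t$. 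If it did not, $\gamma_{t+1}$ would not solve the equation on $(\alpha_{t+1},\alpha_t)$. Your gluing step is circular for the same reason: using continuity of $\gamma^\star$ to get $\gamma_{t+1}(\alpha_t)=\gamma_t(\alpha_t)$ presupposes $\gamma^\star=\gamma_{t+1}$ just below $\alpha_t$, which is what is being proved.

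The paper supplies exactly these two missing pieces.
\begin{itemize}
\item Subtracting the defining identities of $\gamma_{t+1}$ and $\gamma_t$ at $\alpha_t$, and using $-b_{s_t}\alpha_t+\gamma_t(\alpha_t)+c_{s_t}=0$, gives $\gamma_{t+1}(\alpha_t)=\gamma_t(\alpha_t)$ algebraically.
\item If $s_t$ enters, it was negative above $\alpha_t$, so $m_t<b_{s_t}$, i.e.\ $\sum_{s\in\mathcal{I}_t}a_s(b_s-b_{s_t})<0$. Adding the zero term for $s_t$ yields $m_{t+1}<b_{s_t}$, so the new component vanishes at $\alpha_t$ with negative slope and is positive below. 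Removal is symmetric.
\end{itemize}

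If you prefer to stay in your $\gamma^\star$-based framework, an elimination argument also works. For small $\varepsilon$, $\gamma^\star$ is affine on $(\alpha_t-\varepsilon,\alpha_t)$, and every component has a strict constant sign there, since flat zeros are excluded by (ii). Components $s\neq s_t$ keep their sign by continuity and (iii). The active set cannot remain $\mathcal{I}_t$: then $\gamma^\star=\gamma_t$ on both sides of $\alpha_t$, and a nonconstant affine function would have the same sign on both sides of its root. Hence the active set is $\mathcal{I}_t\triangle\{s_t\}$.

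One of these two arguments is needed. By contrast, the no-flat-stretch property you flagged as the main obstacle is a one-line consequence of (ii). The $\gamma_k$-component of $s$ is, up to a positive factor, affine with constant term $\rho+\sum_{s'\in\mathcal{I}_k}a_{s'}(c_s-c_{s'})\neq 0$.
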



\begin{proof}
    We first show that in each iteration $t$ of the algorithm, the index set $\mathcal{I}_t$ satisfies
    \begin{align*}
        \mathcal{I}_t = \mathcal{I}^+ (\alpha)
        \quad \text{with} \quad
        &\mathcal{I}^+ (\alpha) = \left\{ s \in \mathcal{S} \, : \, -b_s \alpha + \gamma_t (\alpha) + c_s > 0 \right\} \\
        \mathcal{S} \setminus \mathcal{I}_t = \mathcal{I}^- (\alpha)
        \quad \text{with} \quad
        &\mathcal{I}^- (\alpha) = \left\{ s \in \mathcal{S} \, : \, -b_s \alpha + \gamma_t (\alpha) + c_s < 0 \right\}
    \end{align*}
    for all $\alpha \in (\alpha_t, \alpha_{t-1})$. In other words, the index set $\mathcal{I}_t$ contains the components of $-\bm{b} \alpha + \gamma_t (\alpha) \mathbf{e} + \bm{c}$ that are strictly positive on the open interval $\alpha \in (\alpha_t, \alpha_{t-1})$, and all the components in the complement $\mathcal{S} \setminus \mathcal{I}_t$ are strictly negative over $\alpha \in (\alpha_t, \alpha_{t-1})$. From this we then conclude that $\gamma_t (\alpha) = \gamma^\star (\alpha)$ over $\alpha \in [\alpha_t, \alpha_{t-1}]$ for all $t$. Since $\alpha_0 = \infty$ and $\alpha_T = -\infty$ by construction, the latter implies that the output of the algorithm is correct upon termination. We finally show that Algorithm~\ref{alg:2norm:gamma_form} terminates in at most $2S$ iterations. Since the per-iteration computations are bounded by $\mathcal{O} (S)$ and the initial sorting takes time $\mathcal{O} (S \log S)$, \mbox{the complexity statement then follows as well.}
    
    In view of the first step, we first observe that
    \begin{equation*}
        -b_1 \alpha + \gamma_1 (\alpha) + c_1
        \;\; = \;\;
        -b_1 \alpha + \frac{\rho + a_1 (b_1 \alpha - c_1)}{a_1} + c_1
        \;\; = \;\;
        \frac{\rho}{a_1}
        \;\; > \;\;
        0,
    \end{equation*}
    that is, $\mathcal{I}_1 \subseteq \mathcal{I}^+ (\alpha)$, while at the same time
    \begin{equation*}
        -b_s \alpha + \gamma_1 (\alpha) + c_s
        \;\; = \;\;
        -b_s \alpha + \frac{\rho + a_1 (b_1 \alpha - c_1)}{a_1} + c_s
        \;\; = \;\;
        (b_1 - b_s) \alpha + \frac{\rho}{a_1} + (c_s - c_1)
        \;\; < \;\;
        0
    \end{equation*}
    for all $s \in \mathcal{S} \setminus \mathcal{I}_1$, that is, $\mathcal{S} \setminus \mathcal{I}_1 \subseteq \mathcal{I}^- (\alpha)$, whenever $\alpha > \alpha_1$. Note that Assumption~\ref{assm:2norm:simple_algo}~\emph{(i)} ensures that $b_s \neq b_1$ for all $s \neq 1$. We thus conclude that $\mathcal{I}_1 = \mathcal{I}^+ (\alpha)$ and $\mathcal{S} \setminus \mathcal{I}_1 = \mathcal{I}^- (\alpha)$ for $\alpha > \alpha_1$.
    
    Assume now that $\mathcal{I}_t = \mathcal{I}^+ (\alpha)$ and $\mathcal{S} \setminus \mathcal{I}_t = \mathcal{I}^- (\alpha)$ for all $\alpha \in (\alpha_t, \alpha_{t-1})$, and consider the construction of the index set $\mathcal{I}_{t + 1}$ in iteration $t$ for the case when $\alpha_t > -\infty$. By Assumption~\ref{assm:2norm:simple_algo}~\emph{(ii)}, there is no $s \in \mathcal{S}$ such that $-b_s \alpha + \gamma_t (\alpha) + c_s = 0$ on an interval of positive width (\emph{cf.}~Lemma~\ref{lem:consequences_of_assm_2norm} in the appendix), which implies that the breakpoint $\alpha_{t+1}$ is strictly smaller than $\alpha_t$. Moreover, Assumption~\ref{assm:2norm:simple_algo}~\emph{(iii)} ensures that there are no $s, s' \in \mathcal{S}$, $s \neq s'$, such that $-b_s \alpha + \gamma_t (\alpha) + c_s = 0$ and $-b_{s'} \alpha + \gamma_t (\alpha) + c_{s'} = 0$ simultaneously at any $\alpha \in \mathbb{R}$ (\emph{cf.}~Lemma~\ref{lem:consequences_of_assm_2norm}), implying that the algorithm does not skip any components $s \in \mathcal{S}$ whose expressions $-b_s \alpha + \gamma_t (\alpha) + c_s$ vanish. To show that $\mathcal{I}_{t+1} = \mathcal{I}^+ (\alpha)$ and $\mathcal{S} \setminus \mathcal{I}_{t+1} = \mathcal{I}^- (\alpha)$ for all $\alpha \in (\alpha_{t+1}, \alpha_t)$, we proceed in two steps. We first show that $\gamma_{t+1} (\alpha_t) = \gamma_t (\alpha_t)$. The construction of $\alpha_{t+1}$ then implies that none of the expressions $-b_s \alpha + \gamma_t (\alpha) + c_s$, $s \in \mathcal{S} \setminus \{ s_t \}$ vanish over the interval $\alpha \in (\alpha_{t+1}, \alpha_t)$, that is, $\mathcal{I}_{t+1} \setminus \{ s_t \} \subseteq \mathcal{I}^+ (\alpha)$ and $\mathcal{S} \setminus [\mathcal{I}_{t+1} \cup \{ s_t \}] \subseteq \mathcal{I}^- (\alpha)$ over $\alpha \in (\alpha_{t+1}, \alpha_t)$. We then show that $-b_s \alpha + \gamma_t (\alpha) + c_s > 0$ across $\alpha \in (\alpha_{t+1}, \alpha_t)$ if $s_t \in \mathcal{I}_{t+1}$ and $-b_s \alpha + \gamma_t (\alpha) + c_s < 0$ over $\alpha \in (\alpha_{t+1}, \alpha_t)$ if $s_t \notin \mathcal{I}_{t+1}$. Together with our earlier finding, this implies that $\mathcal{I}_{t+1} = \mathcal{I}^+ (\alpha)$ and $\mathcal{S} \setminus \mathcal{I}_{t+1} = \mathcal{I}^- (\alpha)$ over $\alpha \in (\alpha_{t+1}, \alpha_t)$.
    
    To see that $\gamma_{t+1} (\alpha_t) = \gamma_t (\alpha_t)$, we note that
    \begin{equation*}
        \sum_{s \in \mathcal{I}_{t+1}} a_s \left[ -b_s \alpha_t + \gamma_{t+1} (\alpha_t) + c_s \right]
        \;\; = \;\;
        \rho
        \;\; = \;\;
        \sum_{s \in \mathcal{I}_t} a_s \left[ -b_s \alpha_t + \gamma_t (\alpha_t) + c_s \right]
    \end{equation*}
    by construction of $\gamma_t (\alpha)$ and $\gamma_{t+1} (\alpha)$. If $\mathcal{I}_{t+1} = \mathcal{I}_t \cup \{s_t\}$, then the above equation implies that
    \begin{equation*}
        \begin{array}{rl}
            0
            & \displaystyle = \;\; \mspace{10mu} \sum_{s \in \mathcal{I}_{t+1}} a_s \left[ -b_s \alpha_t + \gamma_{t+1} (\alpha_t) + c_s \right] -  \sum_{s \in \mathcal{I}_t} a_s \left[ -b_s \alpha_t + \gamma_t (\alpha_t) + c_s \right] \\
            & \displaystyle = \;\;
            \sum_{s \in \mathcal{I}_t \cup \{ s_t \}} a_s \left[ -b_s \alpha_t + \gamma_{t+1} (\alpha_t) + c_s \right] -  \sum_{s \in \mathcal{I}_t} a_s \left[ -b_s \alpha_t + \gamma_t (\alpha_t) + c_s \right] \\
            & \displaystyle = \;\;
            a_{s_t} \left[ -b_{s_t} \alpha_t + c_{s_t} \right] + \Bigg( \sum_{s \in \mathcal{I}_t \cup \{ s_t \}} a_s \Bigg) \gamma_{t+1} (\alpha_t) - \Bigg( \sum_{s \in \mathcal{I}_t} a_s \Bigg) \gamma_t (\alpha_t) + a_{s_t} \gamma_t (\alpha_t) - a_{s_t} \gamma_t (\alpha_t) \\
            & \displaystyle = \;\;
            a_{s_t} \left[ -b_{s_t} \alpha_t + \gamma_t (\alpha_t) + c_{s_t} \right] + \Bigg( \sum_{s \in \mathcal{I}_t \cup \{ s_t \}} a_s \Bigg) \left[ \gamma_{t+1} (\alpha_t) - \gamma_t (\alpha_t) \right].
        \end{array}
    \end{equation*}
    The above equality implies that $\gamma_{t+1} (\alpha_t) = \gamma_t (\alpha_t)$ since $\bm{a} > \mathbf{0}$ and $-b_{s_t} \alpha_t +  \gamma_t (\alpha_t) + c_{s_t} = 0$ by the definition of $\alpha_t$. A similar argument shows that $\gamma_{t+1} (\alpha_t) = \gamma_t (\alpha_t)$ also when $\mathcal{I}_{t+1} = \mathcal{I}_t \backslash \{ s_t \}$.
    
    Assume now that $\mathcal{I}_{t+1} = \mathcal{I}_t \cup \{ s_t \}$, in which case $-b_{s_t} \alpha + \gamma_t (\alpha) + c_{s_t} < 0$ over $\alpha \in (\alpha_t, \alpha_{t-1})$. This implies that the slope $m_t$ of $\gamma_t(\alpha)$ is strictly less than $b_{s_t}$, and we thus have
    \begin{equation*}
        \mspace{-5mu}
        \frac{\sum_{s \in \mathcal{I}_t} a_s b_s}{\sum_{s \in \mathcal{I}_t} a_s} < b_{s_t}
        \; \Longleftrightarrow \;
        \sum_{s \in \mathcal{I}_t} a_s \left[ b_s - b_{s_t} \right] < 0 
        \; \Longleftrightarrow \;
        \sum_{s \in \mathcal{I}_t \cup \{ s_t \}} a_s \left[ b_s - b_{s_t} \right] < 0
        \; \Longleftrightarrow \;
        \frac{\sum_{s \in \mathcal{I}_{t+1}} a_s b_s}{\sum_{s \in \mathcal{I}_{t+1}} a_s} < b_{s_t},
    \end{equation*}
    that is, the slope of $\gamma_{t+1}(\alpha)$ is also strictly less than $b_{s_t}$. Since $-b_{s_t}\alpha_t + \gamma_{t+1}(\alpha_t) + c_{s_t} = 0$, we must have $-b_{s_t}\alpha + \gamma_{t+1}(\alpha) + c_{s_t} > 0$ for $\alpha \in (\alpha_{t+1}, \alpha_t)$, that is, the inclusion of $s_t$ into $\mathcal{I}_{t+1}$ is correct. A similar argument shows that $-b_s \alpha + \gamma_{t+1} (\alpha) + c_s < 0$ over $\alpha \in (\alpha_{t+1}, \alpha_t)$ if $\mathcal{I}_{t+1} = \mathcal{I}_t \setminus \{ s_t \}$.
    
    We now observe that for all $t = 1, \ldots$ and all $\alpha \in [\alpha_t, \alpha_{t-1}]$, we have
    \begin{equation*}
        \bm{a}^\top \left[ -\bm{b} \alpha + \gamma_t (\alpha) \mathbf{e} + \bm{c} \right]_+
        \;\; = \;\;
        \sum_{s \in \mathcal{I}_t} \left[ - a_s b_s \alpha + a_s \gamma_t (\alpha) + a_s c_s \right]
        \;\; = \;\;
        \rho,
    \end{equation*}
    where the first identity follows from the properties of $\mathcal{I}_t$ and $\mathcal{S} \setminus \mathcal{I}_t$, and the second identity is due to the definition of $\gamma_t$. We thus conclude that $\gamma_t (\alpha) = \gamma^\star (\alpha)$ over $[\alpha_t, \alpha_{t-1}]$ as desired.

    We finally consider the runtime of the algorithm. To this end, recall that $\gamma^\star$ is a concave function by Proposition~\ref{prop:gamma_star_alpha}, and Assumption~\ref{assm:2norm:simple_algo}~\emph{(ii)} ensures that there is no $s \in \mathcal{S}$ such that $-b_s \alpha + \gamma^\star (\alpha)+ c_s = 0$ on an interval of positive width  (\emph{cf.}~Lemma~\ref{lem:consequences_of_assm_2norm}). Thus, for each component $s \in \mathcal{S}$ we have $-b_s \alpha + \gamma^\star (\alpha) + c_s = 0$ at no more than two of the breakpoints $\alpha_1, \alpha_2, \ldots$. The result now follows from the fact that the number of iterations is bounded by the number of times that any of the component functions vanishes.
\end{proof}

\subsection{Efficient Solution for Equation System~\eqref{eq:2norm:nonlinear_eqs}}\label{sec:2norm:joint_algorithm}

A direct approach to solving the system~\eqref{eq:2norm:nonlinear_eqs} is to compute the solution paths
$\{(\alpha^1_\tau, m_\tau^1, v_\tau^1)\}_{\tau=1}^{T_1}$ and
$\{(\alpha^2_\tau, m_\tau^2, v_\tau^2)\}_{\tau=1}^{T_2}$ for the two equations separately using Algorithm~\ref{alg:2norm:gamma_form}, and then determine the intersection of the resulting piecewise affine curves $\gamma^1(\alpha)$ and $\gamma^2(\alpha)$. While conceptually simple, this strategy is unnecessary because the first equation in~\eqref{eq:2norm:nonlinear_eqs},
\begin{equation}\label{eq:l2_first_eq}
\mathbf{e}^\top \bm{\Sigma}_{sa}^{-2}\Big[ -\bm{b}\alpha + \gamma \mathbf{e} + 2\bm{\Sigma}_{sa}^2\overline{\bm{p}}_{sa}\Big]_+ \;=\; 2,
\end{equation}
does not depend on the bisection parameter $\beta$ (whereas the second equation does). We therefore solve~\eqref{eq:l2_first_eq} once, obtain $\gamma^1(\alpha)$ on $\alpha\ge 0$, and then reuse this solution path to solve the system~\eqref{eq:2norm:nonlinear_eqs} for each value of $\beta$ encountered in the outer bisection (as in the proofs of Theorems~\ref{thm:overall_complexity:exact_subproblem} and \ref{thm:overall_complexity:inexact_subproblem}).

Given $\gamma^1(\alpha)$ satisfying~\eqref{eq:l2_first_eq}, the system~\eqref{eq:2norm:nonlinear_eqs} reduces to finding $\alpha^\star$ such that
\begin{equation}\label{eq:l2_second_eq_with_gamma1}
\bm{b}^\top \bm{\Sigma}_{sa}^{-2}\Big[ -\bm{b}\alpha^\star + \gamma^1(\alpha^\star)\mathbf{e} + 2\bm{\Sigma}_{sa}^2\overline{\bm{p}}_{sa}\Big]_+ \;=\; 2\beta,
\end{equation}
and then setting $\gamma^\star = \gamma^1(\alpha^\star)$. Note that $\alpha$ is the dual variable associated with the constraint $\bm{b}^\top\bm{p}_{sa}\le \beta$ in~\eqref{eq:gen_projection}, hence $\alpha^\star\ge 0$.

We first record two refinements of Algorithm~\ref{alg:2norm:gamma_form} 
when applied to~\eqref{eq:l2_first_eq}.

\begin{prop}\label{prop:L2_sysEq_reducedSet}
When applying Algorithm~\ref{alg:2norm:gamma_form} to~\eqref{eq:l2_first_eq}, the breakpoint update
\[
\alpha_t = \max \Big\{ \alpha \in (-\infty, \alpha_{t - 1}) \, : \, -b_s \alpha + \gamma_t (\alpha) + c_s = 0  \text{ for some $s \in \mathcal{S}$} \Big\}
\]
can be replaced by
\[
\alpha_t = \max \Big\{ \alpha \in [0 , \alpha_{t - 1}) \, : \, -b_s \alpha + \gamma_t (\alpha) + c_s = 0  \text{ for some $s \in \mathcal{S}_t$} \Big\},
\qquad
\mathcal{S}_t := \{ s\in\mathcal{S} : m_t < b_s \}.
\]
In particular, since $\gamma(\alpha)$ is a piecewise affine non-decreasing concave function (Proposition~\ref{prop:gamma_star_alpha}) and $m_t$ is non-decreasing in $t$, the set sizes $|\mathcal{S}_t|$ are non-increasing in $t$.
\end{prop}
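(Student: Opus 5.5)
We need two modifications: (a) restrict the search to $\alpha\ge 0$; (b) only components $s\in\mathcal{S}_t$ can produce the next breakpoint.

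For (a), recall that $\alpha$ is the Lagrange multiplier of the constraint $\bm{b}^\top\bm{p}_{sa}\le\beta$, so any solution of~\eqref{eq:2norm:nonlinear_eqs} relevant to Proposition~\ref{prop:2norm} has $\alpha^\star\ge 0$. Hence we only need $\gamma^1$ on $[0,\infty)$. Tracing Algorithm~\ref{alg:2norm:gamma_form} from $\alpha_0=\infty$ downward, the breakpoints it produces in $[0,\infty)$ are unaffected by what happens for negative $\alpha$. We can therefore stop as soon as no breakpoint exists in $[0,\alpha_{t-1})$, treating that case as $\alpha_t=-\infty$ on the relevant domain. Correctness on $[0,\infty)$ then follows verbatim from the proof of Theorem~\ref{thm:2norm_simple_alg_works}.

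For (b), I would argue by cases on membership in $\mathcal{I}_t$, for a component $s$ with $b_s\le m_t$ (so $s\notin\mathcal{S}_t$). On $(\alpha_t,\alpha_{t-1})$ the function $g_s(\alpha)=-b_s\alpha+\gamma_t(\alpha)+c_s$ is affine with slope $m_t-b_s\ge 0$, so it is non-increasing as $\alpha$ decreases.
\begin{itemize}
\item If $s\notin\mathcal{I}_t$, then $g_s<0$ just below $\alpha_{t-1}$, and moving left keeps it negative. It therefore has no root in $[0,\alpha_{t-1})$. (If the slope is $0$ it is constant and nonzero by Assumption~\ref{assm:2norm:simple_algo}(ii), via Lemma~\ref{lem:consequences_of_assm_2norm}.)
\item If $s\in\mathcal{I}_t$, we must exclude a root of $g_s$ while decreasing $\alpha$. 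I would show this is impossible using $m_t=\sum_{\mathcal{I}_t}a_sb_s/\sum_{\mathcal{I}_t}a_s$.
\end{itemize}

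Here is the route for the second case. Indices enter $\mathcal{I}$ only when their slope exceeds the current $m$, as shown in the proof of Theorem~\ref{thm:2norm_simple_alg_works}. An index $s\in\mathcal{I}_t$ with $b_s\le m_t$ gives $g_s$ with non-negative slope that is positive at $\alpha_{t-1}$. Moving left, it can vanish only if $g_s$ is decreasing in the rightward direction, which it is; so such components can in principle exit.

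This is the main obstacle. I would resolve it via the concavity of $\gamma^\star$ from Proposition~\ref{prop:gamma_star_alpha}. Since $\gamma^\star$ is concave, its slope $m_t$ is non-decreasing as $\alpha$ decreases, i.e.\ in $t$. Then $-b_s\alpha+\gamma^\star(\alpha)+c_s$ is concave, so it vanishes at most twice. The vanishing with positive slope in the leftward sense can occur only where the left-derivative $m_t$ exceeds $b_s$. Concretely, at a root $\alpha_t$ of $g_s$ approached from the right, the function being zero there and nonzero on $(\alpha_t,\alpha_{t-1})$ forces $m_t\ne b_s$. The cases $m_t<b_s$ and $m_t>b_s$ then correspond to the sign pattern that must be checked against the defining inequality. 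I would verify carefully that the sign of $m_t-b_s$ matches the claimed set $\mathcal{S}_t=\{m_t<b_s\}$. If it turns out reversed, the set definition should be read accordingly, and I would reconcile it with the example in Figure~\ref{fig:norm2_algo-1}.

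Finally, monotonicity of $|\mathcal{S}_t|$ is immediate. The slopes $m_t$ of $\gamma^\star$ on successive pieces are non-decreasing in $t$ because $\gamma^\star$ is concave and we move leftward. Hence $\{s:m_t<b_s\}$ shrinks as $t$ grows.
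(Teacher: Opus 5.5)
Your restriction to $\alpha\geq 0$, your treatment of indices $s\notin\mathcal{I}_t$ with $b_s\leq m_t$, and your monotonicity argument for $|\mathcal{S}_t|$ are all fine. The gap is the case you flag yourself and then leave open. Take an index $s\in\mathcal{I}_t$ with $b_s<m_t$. Then $-b_s\alpha+\gamma_t(\alpha)+c_s=(m_t-b_s)\alpha+v_t+c_s$ is positive near $\alpha_{t-1}$ and decreases as $\alpha$ decreases, so $s$ \emph{leaves} $\mathcal{I}$ at $\alpha=-(v_t+c_s)/(m_t-b_s)$. The content of the proposition is that this point is never positive. Your hedge that $\mathcal{S}_t$ might have to be ``read accordingly'' is misplaced: roots with $m_t<b_s$ are entries, roots with $m_t>b_s$ are exits, and $\mathcal{S}_t$ is correct as stated.

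Concavity of $\gamma^\star$ cannot settle this, because concavity also holds for the generic equation~\eqref{eq:2norm:nonlinear_genericform}, and there the claim is false. Replacing $\bm{c}$ by $\bm{c}+\delta\bm{b}$ with $\delta>0$ keeps $\bm{c}\geq\bm{0}$ and shifts the entire path $\gamma^\star$, breakpoints included, to the right by $\delta$. Exits can therefore be moved into $(0,\infty)$. Concretely, take $\bm{a}=\mathbf{e}$, $\bm{b}=(0,1,2)^\top$, $\bm{c}=(0,2,9/2)^\top$ and $\rho=1$, which satisfy Assumption~\ref{assm:2norm:simple_algo}. Algorithm~\ref{alg:2norm:gamma_form} produces $\mathcal{I}_3=\{1,2,3\}$ and $\gamma_3(\alpha)=\alpha-11/6$. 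Index $1$ then exits at $\alpha_3=11/6>0$ even though $b_1=0<m_3=1$, so the restricted update would miss this breakpoint.

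The missing idea is a property specific to~\eqref{eq:l2_first_eq}, which your argument never uses. There $\rho=2$, $a_{s'}=\sigma_{sas'}^{-2}$ and $c_{s'}=2\sigma_{sas'}^2\overline{p}_{sas'}$, so $a_{s'}c_{s'}=2\overline{p}_{sas'}$. The intercept of $\gamma_t$ is therefore
$v_t=\bigl(\rho-\sum_{s'\in\mathcal{I}_t}a_{s'}c_{s'}\bigr)/\sum_{s'\in\mathcal{I}_t}a_{s'}=2\bigl(1-\sum_{s'\in\mathcal{I}_t}\overline{p}_{sas'}\bigr)/\sum_{s'\in\mathcal{I}_t}a_{s'}\geq 0$.

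It follows that $v_t+c_s\geq 0$ for every $s$. Whenever $b_s\leq m_t$, the affine function $(m_t-b_s)\alpha+v_t+c_s$ is at least $v_t+c_s\geq 0$ on $\alpha\geq 0$. It can vanish at some $\alpha>0$ only if it vanishes identically, which Lemma~\ref{lem:consequences_of_assm_2norm} excludes. This disposes of all $s\notin\mathcal{S}_t$ at once, regardless of membership in $\mathcal{I}_t$, and it is exactly the paper's argument. With it, your case split and the appeal to concavity become unnecessary, except for the final monotonicity claim.
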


\begin{proof}
Since $\alpha^\star\ge 0$, it suffices to search for breakpoints over $\alpha\in[0,\alpha_{t-1})$.
To justify restricting $\mathcal{S}$ to $\mathcal{S}_t$, recall that for~\eqref{eq:l2_first_eq} we have $\rho=2$ and $c_{s'} = 2\sigma_{sas'}^2 \overline{p}_{sas'} \ge 0$, and Algorithm~\ref{alg:2norm:gamma_form} yields
\[
v_t \;=\; \frac{\rho - \sum_{s \in \mathcal{I}_t} a_s c_s}{\sum_{s \in \mathcal{I}_t} a_s}
\;=\; \frac{2 - \sum_{s' \in \mathcal{I}_t} 2 \overline{p}_{sas'}}{\sum_{s \in \mathcal{I}_t} a_s}
\;=\; \frac{2\bigl(1 - \sum_{s' \in \mathcal{I}_t} \overline{p}_{sas'}\bigr)}{\sum_{s \in \mathcal{I}_t} a_s}
\;\ge\; 0,
\]
where the second equality follows by matching~\eqref{eq:2norm:nonlinear_genericform} to~\eqref{eq:l2_first_eq}.
Now
\[
-b_s \alpha + \gamma_t(\alpha) + c_s = (m_t-b_s)\alpha + v_t + c_s = 0
\quad\Longleftrightarrow\quad
\alpha = -\frac{v_t+c_s}{m_t-b_s}.
\]
Since $v_t+c_s\ge 0$, this implies $\alpha>0$ only if $m_t<b_s$. Thus indices with $m_t\ge b_s$ can only generate breakpoints at $\alpha\le 0$ and can be ignored on the domain $\alpha\ge 0$.
\end{proof}

\begin{prop}\label{prop:L2_sysEq_notrepeat}
Consider the setting of Proposition~\ref{prop:L2_sysEq_reducedSet}. The search set $\mathcal{S}_t$ can be further replaced by $\mathcal{S}_t \setminus \mathcal{I}_t$. In particular, in Algorithm~\ref{alg:2norm:gamma_form}, the update
\begin{center}\rm
\textbf{if} $s_t \in \mathcal{I}_t$ \textbf{then} set $\mathcal{I}_{t+1}=\mathcal{I}_t\setminus\{s_t\}$ \textbf{else} set $\mathcal{I}_{t+1}=\mathcal{I}_t\cup\{s_t\}$ \textbf{end}
\end{center}
can be simplified to
\begin{center}\rm
set $\mathcal{I}_{t+1}=\mathcal{I}_t\cup\{s_t\}$.
\end{center}
\end{prop}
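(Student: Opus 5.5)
The plan is to show that, on the domain $\alpha \geq 0$ singled out in Proposition~\ref{prop:L2_sysEq_reducedSet}, no index $s \in \mathcal{I}_t$ can ever generate the breakpoint $\alpha_t$. Both claims follow from this. First, if the maximum defining $\alpha_t$ is never attained by an index in $\mathcal{I}_t$, then restricting the search to $\mathcal{S}_t \setminus \mathcal{I}_t$ does not change $\alpha_t$. Second, the index $s_t$ found by the algorithm always satisfies $s_t \notin \mathcal{I}_t$, so the ``else'' branch $\mathcal{I}_{t+1} = \mathcal{I}_t \cup \{ s_t \}$ is the only one that is ever executed.

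The key computation uses the affine representation from Algorithm~\ref{alg:2norm:gamma_form}. For fixed $t$ and any $s \in \mathcal{S}$, the expression tested in the breakpoint rule is
\[
\ell_s (\alpha) \;=\; -b_s \alpha + \gamma_t (\alpha) + c_s \;=\; (m_t - b_s)\, \alpha + v_t + c_s ,
\]
which is affine in $\alpha$ on all of $\mathbb{R}$. Two facts are needed about it.
\begin{itemize}
\item[(a)] By the proof of Proposition~\ref{prop:L2_sysEq_reducedSet}, $v_t = 2 \bigl(1 - \sum_{s' \in \mathcal{I}_t} \overline{p}_{sas'}\bigr) / \sum_{s \in \mathcal{I}_t} a_s \geq 0$ because $\overline{\bm{p}}_{sa} \in \Delta_S$. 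Together with $c_s = 2 \sigma_{sas}^2 \overline{p}_{sas} \geq 0$, this gives $\ell_s (0) \geq 0$.
\item[(b)] By the invariant established in the proof of Theorem~\ref{thm:2norm_simple_alg_works}, every $s \in \mathcal{I}_t$ satisfies $\ell_s (\alpha) > 0$ for all $\alpha \in (\alpha_t, \alpha_{t-1})$. In particular, $\ell_s (\alpha') > 0$ for some $\alpha' \in (0, \alpha_{t-1})$, provided this interval meets $(0, \infty)$; if it does not, the search on $[0, \alpha_{t-1})$ is empty and the algorithm terminates.
\end{itemize}
An affine function that is nonnegative at $0$ and strictly positive at some $\alpha' > 0$ is strictly positive on $(0, \alpha']$. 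Moreover, when $m_t < b_s$ it decreases as $\alpha$ grows, and when $m_t \geq b_s$ it is nondecreasing. In either case, $\ell_s$ has no root in $(0, \alpha_{t-1})$, so the only possible root of $\ell_s$ on $[0, \alpha_{t-1})$ is $\alpha = 0$.

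The main obstacle is this boundary case $\ell_s (0) = 0$, which occurs exactly when $v_t + c_s = 0$. I would first try to exclude it via Assumption~\ref{assm:2norm:simple_algo}~(ii). With $\mathcal{S}' = \mathcal{I}_t$, that assumption gives
\[
\rho + \sum_{s' \in \mathcal{I}_t} a_{s'} (c_s - c_{s'}) \neq 0 ,
\]
and $v_t + c_s$ equals this quantity divided by $\sum_{s' \in \mathcal{I}_t} a_{s'}$. Hence $v_t + c_s \neq 0$, and by (a) it is strictly positive. Failing that, I would note that a breakpoint at $\alpha = 0$ ends the search on $\alpha \geq 0$ anyway, so it does not affect the solution path used in~\eqref{eq:l2_second_eq_with_gamma1}.

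Finally, I would combine this with Proposition~\ref{prop:L2_sysEq_reducedSet}, which already discards the indices with $m_t \geq b_s$. The search set therefore becomes $\mathcal{S}_t \setminus \mathcal{I}_t$, and consequently $s_t \notin \mathcal{I}_t$, which justifies the simplified update $\mathcal{I}_{t+1} = \mathcal{I}_t \cup \{ s_t \}$.
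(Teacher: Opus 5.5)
Your argument is correct, but it uses a different mechanism from the paper's. The paper considers only $s \in \mathcal{S}_t \cap \mathcal{I}_t$ and relies on the \emph{slope}. Since $m_t < b_s$, the function $\ell_s$ is strictly decreasing, so positivity on $(\alpha_t, \alpha_{t-1})$ (from the invariant of Theorem~\ref{thm:2norm_simple_alg_works}) extends to all of $(-\infty, \alpha_{t-1})$, including $\alpha = 0$, in one line.

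You rely instead on the sign of the \emph{intercept}, $\ell_s(0) = v_t + c_s \geq 0$, and interpolate affinely between $0$ and a point of $(\alpha_t, \alpha_{t-1})$. This handles every $s \in \mathcal{I}_t$ regardless of slope. The cost is that you must treat $\ell_s(0) = 0$ separately via Assumption~\ref{assm:2norm:simple_algo}~(ii).

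For the statement at hand, that detour is not needed. If $m_t < b_s$, then $\ell_s(0) > \ell_s(\alpha') > 0$ automatically. So the boundary case can only arise for indices with $m_t \geq b_s$, and Proposition~\ref{prop:L2_sysEq_reducedSet} has already discarded those.

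Your route does buy something, though. It cleanly justifies a point that the proof of Proposition~\ref{prop:L2_sysEq_reducedSet} glosses over: indices with $m_t \geq b_s$ cannot produce a breakpoint exactly at $\alpha = 0$, even though $0$ lies in the search domain $[0, \alpha_{t-1})$. Assumption~\ref{assm:2norm:simple_algo}~(ii) with $\mathcal{S}' = \mathcal{I}_t$, together with your fact (a), gives $v_t + c_s > 0$ strictly for every $s$.

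One small presentational fix concerns your ``in either case'' sentence. In the decreasing case, positivity on $[\alpha', \alpha_{t-1})$ comes from the invariant, not from monotonicity. You should therefore choose $\alpha' \in (\max\{0, \alpha_t\}, \alpha_{t-1})$ explicitly and say where each piece of $(0, \alpha_{t-1})$ is covered.
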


\begin{proof}
Let $s'\in\mathcal{S}_t$. If $s'\in\mathcal{I}_t$, then by the proof of Theorem~\ref{thm:2norm_simple_alg_works} we have
$-b_{s'}\alpha+\gamma_t(\alpha)+c_{s'}>0$ for all $\alpha\in(\alpha_t,\alpha_{t-1})$.
Since $s'\in\mathcal{S}_t$ implies $m_t<b_{s'}$, the affine function
$(m_t-b_{s'})\alpha+v_t+c_{s'}$ is strictly decreasing in $\alpha$, hence it remains strictly positive on the entire half-line $(-\infty,\alpha_{t-1})$ once it is positive on $(\alpha_t,\alpha_{t-1})$.
Therefore such an index cannot attain the maximizing breakpoint in the definition of $\alpha_t$ on $[0,\alpha_{t-1})$, and may be removed from the search set without affecting $\alpha_t$.
\end{proof}

We next show how to solve the full system~\eqref{eq:2norm:nonlinear_eqs} 
using a single scan over the segments of $\gamma^1(\alpha)$. Let $\{(\alpha_t,m_t,v_t)\}_{t=1}^T$ describe the solution path $\gamma^1(\alpha)$ of~\eqref{eq:l2_first_eq} produced by Algorithm~\ref{alg:2norm:gamma_form}, so that
\[
\gamma^1(\alpha)=m_t\alpha+v_t \qquad \text{for all }\alpha\in[\alpha_t,\alpha_{t-1}],\quad t=1,\dots,T,
\]
with $\alpha_0=\infty$ and $\alpha_T=-\infty$ (Theorem~\ref{thm:2norm_simple_alg_works}).
Define, for $\alpha\ge 0$,
\[
F(\alpha)\;:=\;\bm{b}^\top \bm{\Sigma}_{sa}^{-2}\Big[ -\bm{b}\alpha + \gamma^1(\alpha)\mathbf{e} + 2\bm{\Sigma}_{sa}^2\overline{\bm{p}}_{sa}\Big]_+.
\]
Because both equations in~\eqref{eq:2norm:nonlinear_eqs} involve the same vector inside $[\cdot]_+$, the active set of indices does not change within any interval $(\alpha_t,\alpha_{t-1})$. Concretely, on such an interval we have
\[
-b_{s'}\alpha + \gamma^1(\alpha) + 2\sigma_{sas'}^2\overline{p}_{sas'}
=
(m_t-b_{s'})\alpha + \bigl(v_t+2\sigma_{sas'}^2\overline{p}_{sas'}\bigr),
\]
and therefore the index set
\begin{equation}\label{eq:active_set_hat}
\hat{\mathcal S}_t
:=
\Bigl\{s'\in\mathcal S:\;
(m_t-b_{s'})\alpha + (v_t+2\sigma_{sas'}^2\overline{p}_{sas'})>0
\ \text{for (equivalently) any }\alpha\in(\alpha_t,\alpha_{t-1})
\Bigr\}
\end{equation}
is well-defined and constant over $(\alpha_t,\alpha_{t-1})$.
Consequently, $F(\alpha)$ restricts to a single affine function on $[\alpha_t,\alpha_{t-1}]$ and the root condition $F(\alpha)=2\beta$ can be checked by scanning the segments sequentially. This procedure is given in Algorithm~\ref{alg:2norm:findopt}.

\begin{thm}\label{thm:l2_joint_correctness_complexity}
Let $\{(\alpha_t,m_t,v_t)\}_{t=1}^T$ describe $\gamma^1(\alpha)$ as above, and let $\beta$ be fixed.
Algorithm~\ref{alg:2norm:findopt} computes a value $\alpha^\star\ge 0$ such that $(\alpha^\star,\gamma^\star)$ with $\gamma^\star=\gamma^1(\alpha^\star)$ satisfies the system~\eqref{eq:2norm:nonlinear_eqs}.
Moreover, the algorithm runs in time $\mathcal{O}(ST)$; since $T\le 2S$ by Theorem~\ref{thm:2norm_simple_alg_works}, the overall time is $\mathcal{O}(S^2)$.
\end{thm}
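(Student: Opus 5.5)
We have not seen Algorithm~\ref{alg:2norm:findopt} itself, so we take it to be the natural scan implied by the text. It walks through the segments $t=1,2,\dots$ of $\gamma^1$ restricted to $\alpha\ge 0$, in the order of decreasing $\alpha$. On each segment it forms the affine function $F$, tests whether $F(\alpha)=2\beta$ has a root on $[\max\{\alpha_t,0\},\alpha_{t-1}]$, and, if so, returns that root. The proof then has three parts: correctness of the segmentwise formula for $F$, existence of a root with $\alpha^\star\ge 0$, and the running time.

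\textbf{Segmentwise formula.} Fix $t$. By Theorem~\ref{thm:2norm_simple_alg_works}, on $(\alpha_t,\alpha_{t-1})$ we have $\gamma^1(\alpha)=m_t\alpha+v_t$. From the proof of that theorem, each component $(m_t-b_{s'})\alpha+v_t+c_{s'}$ has constant strict sign there, and the positive components are exactly those in $\mathcal I_t$. Hence $\hat{\mathcal S}_t=\mathcal I_t$ is well defined, and
\[
F(\alpha)=\sum_{s'\in\hat{\mathcal S}_t}\sigma_{sas'}^{-2}b_{s'}\bigl[(m_t-b_{s'})\alpha+v_t+c_{s'}\bigr]
\]
for $\alpha\in[\alpha_t,\alpha_{t-1}]$; the formula extends to the closed interval by continuity. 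Forming this affine function and solving the resulting linear equation costs $\mathcal O(S)$ per segment. Over at most $T$ segments the cost is $\mathcal O(ST)$, which is $\mathcal O(S^2)$ because $T\le 2S$.

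\textbf{Existence of a root with $\alpha^\star\ge 0$.} This is the main obstacle. The plan is to argue through Proposition~\ref{prop:2norm} and convex duality rather than through monotonicity of $F$, which is not evident because $F$ mixes weights $b_{s'}$. Under the standing assumptions $\bm b^\top\overline{\bm p}_{sa}>\beta\ge\min\{\bm b\}$, the primal problem~\eqref{eq:2norm:intermediate_prob} is feasible and strongly convex, and strong duality holds. The dual~\eqref{eq:2norm:dual_prob} attains its optimum at some point with $\alpha^\star\ge 0$: the inequality constraint $\bm b^\top\bm p_{sa}\le\beta$ carries a nonnegative multiplier, and by the argument in Proposition~\ref{prop:2norm} that constraint is active at the optimum. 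The optimality conditions derived there show that this optimum satisfies~\eqref{eq:2norm:nonlinear_eqs}. By uniqueness in Proposition~\ref{prop:gamma_star_alpha}, its $\gamma$-component equals $\gamma^1(\alpha^\star)$. Therefore $F(\alpha^\star)=2\beta$ for some $\alpha^\star\ge0$.

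A more elementary cross-check uses the primal solution $\bm p(\alpha)=\tfrac12\bm\Sigma^{-2}[\cdots]_+$, which lies on the simplex by~\eqref{eq:l2_first_eq}. In this notation $F(\alpha)=2\bm b^\top\bm p(\alpha)$. At $\alpha=0$ we have $\gamma^1(0)=0$, so $\bm p(0)=\overline{\bm p}_{sa}$ and $F(0)=2\bm b^\top\overline{\bm p}_{sa}>2\beta$. As $\alpha\to\infty$, $\bm p(\alpha)$ concentrates on the index $s$ with the smallest $b_s$, so $F(\alpha)\to 2\min\{\bm b\}\le 2\beta$. Continuity of $F$, which is piecewise affine and continuous because $\gamma^1$ is, then gives a root by the intermediate value theorem. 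This root lies in some closed segment $[\max\{\alpha_t,0\},\alpha_{t-1}]$, so the scan finds it. In the boundary case $\min\{\bm b\}=\beta$ the limit is attained only in the final unbounded segment, which should be handled explicitly there.

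Finally, whatever root the scan returns satisfies the first equation of~\eqref{eq:2norm:nonlinear_eqs} by construction of $\gamma^1$, and the second equation by $F(\alpha^\star)=2\beta$. So $(\alpha^\star,\gamma^1(\alpha^\star))$ is a valid output; by the remark after Proposition~\ref{prop:2norm}, non-uniqueness does not affect the resulting value~\eqref{eq:q:2norm:resulting_value}.
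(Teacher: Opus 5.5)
Your proof is correct and follows the paper's argument. On each segment the sign pattern is fixed (your $\hat{\mathcal S}_t=\mathcal I_t$), so $F$ is affine with the algorithm's coefficients, the scan returns a root from a segment whose endpoint values bracket $2\beta$, and each segment costs $\mathcal{O}(S)$. The one place you go beyond the paper is the existence of a root with $\alpha^\star\ge 0$: the paper takes this from the solvability claim of Proposition~\ref{prop:2norm} and the sign of the multiplier, whereas your intermediate-value check via $F(0)=2\bm b^\top\overline{\bm p}_{sa}>2\beta$ and $F(\alpha)\to 2\min\{\bm b\}\le 2\beta$ makes that step self-contained.
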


\begin{proof}
Fix a segment $t$ and consider $\alpha\in(\alpha_t,\alpha_{t-1})$.
By definition of $\hat{\mathcal S}_t$ in~\eqref{eq:active_set_hat}, the sign pattern of the components
\(
(m_t-b_{s'})\alpha+(v_t+2\sigma_{sas'}^2\overline{p}_{sas'})
\)
does not change on $(\alpha_t,\alpha_{t-1})$, hence the positive-part operator is equivalent to restricting the sum to $\hat{\mathcal S}_t$ throughout the interval. It follows that $F(\alpha)=A_t\alpha+B_t$ is affine on $[\alpha_t,\alpha_{t-1}]$ with coefficients as in Algorithm~\ref{alg:2norm:findopt}.
Therefore, if $2\beta$ lies between the endpoint values $F(\alpha_t)$ and $F(\alpha_{t-1})$, then either \emph{(i)} $A_t\neq 0$ and the unique solution
$\alpha^\star=(2\beta-B_t)/A_t$ lies in $[\alpha_t,\alpha_{t-1}]$, or \emph{(ii)} $A_t=0$ and $F$ is constant on this segment and any $\alpha$ in the segment satisfies $F(\alpha)=2\beta$.
Since $\alpha^\star\ge 0$ and the segments cover all $\alpha\ge 0$ up to $\alpha_0=\infty$, scanning the segments finds such a segment and returns a valid $\alpha^\star$. Setting $\gamma^\star=\gamma^1(\alpha^\star)$ satisfies~\eqref{eq:l2_first_eq} by construction and~\eqref{eq:l2_second_eq_with_gamma1} by the choice of $\alpha^\star$, hence solves~\eqref{eq:2norm:nonlinear_eqs}.
Finally, each segment requires $\mathcal{O}(S)$ work to construct $\hat{\mathcal S}_t$ (e.g., by testing the sign at an interior point) and to compute $(A_t,B_t)$, hence the total time is $\mathcal{O}(ST)$.
\end{proof}

\begin{algorithm}[tb]
	\KwData{List $\{(\alpha_t,m_t,v_t)\}_{t=1}^T$ defining $\gamma^1(\alpha)$, and parameters $\bm{b}$, $\bm{\Sigma}_{sa}$, $\overline{\bm{p}}_{sa}$, and $\beta$.}
	\For{$t=1,\dots,T$}{
		Choose an interior point $\tilde\alpha_t\in(\alpha_t,\alpha_{t-1})\cap\mathbb{R}_+$, e.g.,
		$\tilde\alpha_t=\frac{1}{2}(\alpha_t+\alpha_{t-1})$ when $\alpha_{t-1}<\infty$, and $\tilde\alpha_t=\max\{0,\alpha_t\}+1$ when $\alpha_{t-1}=\infty$.

		Compute the active set on this segment:
		\[
		\hat{\mathcal S}_t
		=
		\Bigl\{s'\in\mathcal S:\;
		(m_t-b_{s'})\tilde\alpha_t + \bigl(v_t+2\sigma_{sas'}^2\overline{p}_{sas'}\bigr) > 0
		\Bigr\}.
		\]

		Compute the affine coefficients
		\[
		A_t=\sum_{s'\in\hat{\mathcal S}_t}\frac{b_{s'}}{\sigma_{sas'}^2}(m_t-b_{s'}),
		\qquad
		B_t=\sum_{s'\in\hat{\mathcal S}_t}\frac{b_{s'}}{\sigma_{sas'}^2}\bigl(v_t+2\sigma_{sas'}^2\overline{p}_{sas'}\bigr).
		\]
		Define $F_t(\alpha)=A_t\alpha+B_t$ on $[\alpha_t,\alpha_{t-1}]\cap\mathbb{R}_+$.

		Evaluate endpoints:
		$y_{\rm low}=F_t(\max\{0,\alpha_t\})$ and $y_{\rm high}=F_t(\alpha_{t-1})$ (interpreting $F_t(\infty)$ as the limit if $\alpha_{t-1}=\infty$).

		\uIf{$\min\{y_{\rm low},y_{\rm high}\}\le 2 \beta \le \max\{y_{\rm low},y_{\rm high}\}$}{
			\uIf{$A_t=0$}{
				Set $\alpha^\star=\max\{0,\alpha_t\}$ and $\gamma^\star=m_t\alpha^\star+v_t$, and \textbf{return} $(\alpha^\star,\gamma^\star)$.
			}
			\Else{
				Set $\alpha^\star=(2 \beta-B_t)/A_t$ and $\gamma^\star=m_t\alpha^\star+v_t$, and \textbf{return} $(\alpha^\star,\gamma^\star)$.
			}
		}
	}
	\KwResult{A solution $(\alpha^\star,\gamma^\star)$ to~\eqref{eq:2norm:nonlinear_eqs}.}
	\caption{Efficient solution of equation system~\eqref{eq:2norm:nonlinear_eqs} using a precomputed path $\gamma^1(\alpha)$.\label{alg:2norm:findopt}}
\end{algorithm}

\section{$\phi$-Divergence Ambiguity Sets}\label{sec:phi_div}

We now assume that the deviation measure $d_a$ in the $s$-rectangular ambiguity set~\eqref{eq:additive_s_rect} satisfies $d_a (\bm{p}_{sa}, \overline{\bm{p}}_{sa}) = \sum_{s' \in \mathcal{S}} \overline{p}_{sas'} \cdot \phi (p_{sas'} / \overline{p}_{sas'})$ for a convex function $\phi : \mathbb{R}_+ \mapsto \mathbb{R}_+$ with $\phi (1) = 0$. To keep the notational overhead small, we assume throughout this section that $\overline{\bm{p}}_{sa} > \mathbf{0}$ component-wise and the generalized projection problem is feasible, that is, that $\min \{ \bm{b} \} \leq \beta$. Note that if $\overline{p}_{sas'} = 0$ for some $s, s' \in \mathcal{S}$ and $a \in \mathcal{A}$, then $p_{sas'} = 0$ for all $\bm{p}_{sa} \in \Delta_S$ with $d_a (\bm{p}_{sa}, \overline{\bm{p}}_{sa}) < \infty$, and thus we can always ensure that $\overline{\bm{p}}_{sa} > \mathbf{0}$ by a removing redundant indices. $\phi$-divergence ambiguity sets have received significant attention in the literature on distributionally robust and data-driven optimization, and they can be readily calibrated to historical observations of the MDP's transitions \citep{BTdHWMR13:phi_div, BL15:phi_div}.

\begin{prop}\label{prop:phi_div_dual}
	 For the deviation measure $d_a (\bm{p}_{sa}, \overline{\bm{p}}_{sa}) = \sum_{s' \in \mathcal{S}} \overline{p}_{sas'} \cdot \phi \left( \frac{p_{sas'}}{\overline{p}_{sas'}} \right)$, the optimal value of the projection problem~\eqref{eq:gen_projection} equals the optimal value of the bivariate convex optimization problem
	\begin{equation}\label{eq:phi_div:projection_problem}
		\begin{array}{l@{\quad}l}
			\text{\emph{maximize}} & \displaystyle - \beta \alpha + \zeta - \sum_{s' \in \mathcal{S}} \overline{p}_{sas'} \phi^{\star} (-\alpha b_{s'} + \zeta ) \\
			\text{\emph{subject to}} & \displaystyle \alpha \in \mathbb{R}_+, \;\; \zeta \in \mathbb{R},
		\end{array}
	\end{equation}
	where $\phi^\star (y) = \sup \, \{ yt - \phi (t) \, : \, t \in \mathbb{R}_+ \}$ is the convex conjugate of $\phi$.
\end{prop}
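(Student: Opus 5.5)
We will prove the claim by Lagrangian duality. We dualize only the two linear constraints $\bm{b}^\top \bm{p}_{sa} \leq \beta$ (multiplier $\alpha \geq 0$) and $\mathbf{e}^\top \bm{p}_{sa} = 1$ (free multiplier $\zeta$), keeping the non-negativity $\bm{p}_{sa} \in \mathbb{R}^S_+$ inside the inner problem. This is the route suggested by the remark in the proof of Proposition~\ref{prop:2norm}: it avoids a separate multiplier for $\bm{p}_{sa} \geq \mathbf{0}$, which is exactly what produces the conjugate $\phi^\star$ defined over $\mathbb{R}_+$.

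\textbf{Step 1: Lagrange dual function.} For $\alpha \geq 0$ and $\zeta \in \mathbb{R}$, the dual function is
\begin{equation*}
g(\alpha,\zeta) = \inf_{\bm{p} \in \mathbb{R}^S_+} \Big\{ \sum_{s'} \overline{p}_{sas'} \phi(p_{s'}/\overline{p}_{sas'}) + \alpha(\bm{b}^\top \bm{p} - \beta) + \zeta(1 - \mathbf{e}^\top \bm{p}) \Big\}.
\end{equation*}
This equals $-\beta\alpha + \zeta$ plus a sum of separable one-dimensional infima. We substitute $t = p_{s'}/\overline{p}_{sas'} \in \mathbb{R}_+$, which is legitimate because $\overline{\bm{p}}_{sa} > \mathbf{0}$. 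Each term then becomes
\begin{equation*}
\overline{p}_{sas'} \inf_{t \geq 0} \{ \phi(t) - (-\alpha b_{s'} + \zeta) t \} = -\overline{p}_{sas'} \phi^\star(-\alpha b_{s'} + \zeta).
\end{equation*}
This yields exactly the objective of~\eqref{eq:phi_div:projection_problem}. Weak duality then immediately gives that~\eqref{eq:phi_div:projection_problem} is a lower bound on~\eqref{eq:gen_projection}.

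\textbf{Step 2: Convexity of the dual problem.} Since $\phi^\star$ is a supremum of affine functions, it is convex. Composing it with the affine map $(\alpha,\zeta) \mapsto -\alpha b_{s'} + \zeta$ preserves convexity, so the dual objective is concave. Hence~\eqref{eq:phi_div:projection_problem} is a convex (maximization) problem.

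\textbf{Step 3: Strong duality (main obstacle).} The primal problem is convex (Assumption (C)), and after dualization all remaining constraints are affine. We would therefore invoke the refined Slater condition for problems whose constraints are affine (e.g., \citet[\S 28]{R97:convex_analysis} or \citet{BV04:CVXBook}). It requires a feasible point in the relative interior of the domain of the objective.

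The difficulty is that $\phi$ may be infinite or blow up at $0$ (as for the Burg entropy), and feasibility holds only under $\min\{\bm{b}\} \leq \beta$. We would first try to exhibit a feasible $\bm{p}_{sa} > \mathbf{0}$ satisfying $\bm{b}^\top \bm{p}_{sa} \leq \beta$ with $\phi$ finite around it, for instance a convex combination of $\overline{\bm{p}}_{sa}$ with a Dirac at $\arg\min \bm{b}$, perturbed slightly to be strictly positive. This works whenever $\min\{\bm{b}\} < \beta$.

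In the boundary case $\min\{\bm{b}\} = \beta$, the feasible set may consist only of points on the boundary of the simplex, so Slater fails. There we would instead argue directly. The primal value is finite or $+\infty$, and we would use the closedness of the perturbation function (lower semicontinuity of the primal value in $\beta$, using compactness of $\Delta_S$ and lower semicontinuity of $\phi$) to conclude that there is no duality gap, allowing both sides to be $+\infty$. If this becomes too delicate, we would note that the boundary case is handled by the paper's standing feasibility assumption plus a limiting argument in $\beta \downarrow \min\{\bm{b}\}$.

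Finally, attainment of the maximum in~\eqref{eq:phi_div:projection_problem} is not needed, since the statement concerns only optimal values.
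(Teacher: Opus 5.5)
Your proposal follows the paper's proof: the same Lagrangian that dualizes only $\bm{b}^\top \bm{p}_{sa} \leq \beta$ and $\mathbf{e}^\top \bm{p}_{sa} = 1$ while keeping $\bm{p}_{sa} \in \mathbb{R}^S_+$ in the inner infimum, the same identification of the separable infima with $-\overline{p}_{sas'} \phi^\star(-\alpha b_{s'} + \zeta)$, and the same convexity-of-the-conjugate argument. Your Step~3 is more careful than the paper, which simply asserts strong duality from feasibility and linearity of the constraints: your Slater argument for $\beta > \min\{\bm{b}\}$ is correct, and your perturbation argument for $\beta = \min\{\bm{b}\}$ also works, but only if $\phi$ is lower semicontinuous at $0$. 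That holds for the Kullback-Leibler divergence and the Burg entropy, but it does not follow from convexity on $\mathbb{R}_+$ alone, and without it the identity can fail at the boundary: for $\phi(t) = (t-1)^2$ on $t > 0$ with $\phi(0) = 5$, and $S = 2$, $\overline{\bm{p}}_{sa} = (1/2, 1/2)^\top$, $\bm{b} = (0,1)^\top$, $\beta = 0$, the primal value is $3$ but the dual value is $1$.
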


\begin{proof}
	For the deviation measure from the statement of this proposition, problem~\eqref{eq:gen_projection} becomes
	\begin{equation}\label{eq:phi_div:projection_problem_org}
	\begin{array}{l@{\quad}l}
	\text{minimize} & \displaystyle \sum_{s' \in \mathcal{S}} \overline{p}_{sas'} \cdot \phi \left( \frac{p_{sas'}}{\overline{p}_{sas'}} \right) \\
	\text{subject to} & \displaystyle \bm{b}^\top \bm{p}_{sa} \leq \beta \\
	& \displaystyle \bm{p}_{sa} \in \Delta_S.
	\end{array}
	\end{equation}
	The Lagrange dual function associated with this problem is
	\begin{equation*}
	g (\alpha, \zeta) = \inf \left\{ \sum_{s' \in \mathcal{S}} \overline{p}_{sas'} \cdot \phi \left( \frac{p_{sas'}}{\overline{p}_{sas'}} \right) + \alpha (\bm{b}^\top \bm{p}_{sa} - \beta) + \zeta (1 - \mathbf{e}^\top \bm{p}_{sa}) \, : \, \bm{p}_{sa} \in \mathbb{R}^S_+ \right\},
	\end{equation*}
	where $\alpha \in \mathbb{R}_+$ and $\zeta \in \mathbb{R}$. Rearranging terms, we observe that
	\begin{equation*}
	g (\alpha, \zeta) = - \beta \alpha + \zeta - \sum_{s' \in \mathcal{S}} \overline{p}_{sas'} \cdot \sup \left\{ \frac{p_{sas'}}{\overline{p}_{sas'}} \cdot (-\alpha b_{s'} + \zeta) -   \phi \left( \frac{p_{sas'}}{\overline{p}_{sas'}} \right) \, : \, p_{sas'} \in \mathbb{R}_+ \right\},
	\end{equation*}
	and the suprema inside this expression coincide with the convex conjugates $\phi^\star (-\alpha b_{s'} + \zeta)$, $s' \in \mathcal{S}$. The resulting optimization problem~\eqref{eq:phi_div:projection_problem} is convex since the conjugates are convex. Moreover, since $\min \{ \bm{b} \} \leq \beta$ by assumption, problem~\eqref{eq:phi_div:projection_problem_org} affords a feasible solution, and the linearity of the constraints implies that strong duality holds between~\eqref{eq:phi_div:projection_problem} and~\eqref{eq:phi_div:projection_problem_org}, that is, their optimal objective values indeed coincide.
\end{proof}

In the remainder of the section, we study two popular $\phi$-divergences and show that for both of them, problem~\eqref{eq:phi_div:projection_problem} can be further simplified to a univariate convex optimization problem that can be solved efficiently via bisection.

\subsection{Kullback-Leibler Divergence}

We first show that for the Kullback-Leibler divergence $\phi (t) = t \log t - t + 1$, problem~\eqref{eq:phi_div:projection_problem} can be reduced to a univariate convex optimization problem.

\begin{prop}\label{prop:kl_univariate_problem}
	For the Kullback-Leibler divergence $\phi (t) = t \log t - t + 1$, the optimal value of the projection problem~\eqref{eq:gen_projection} equals the optimal value of the univariate convex optimization problem
	\begin{equation}\label{eq:kl_divergence_projection_problem}
		\begin{array}{l@{\quad}l}
			\text{\emph{maximize}} & \displaystyle - \beta \alpha - \log \left( \sum_{s' \in \mathcal{S}} \overline{p}_{sas'} \cdot \mathrm{e}^{-\alpha b_{s'} } \right)  \\
			\text{\emph{subject to}} & \displaystyle \alpha \in \mathbb{R}_+.
		\end{array}
	\end{equation}
\end{prop}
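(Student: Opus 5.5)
Starting from Proposition~\ref{prop:phi_div_dual}, I would specialise the bivariate dual~\eqref{eq:phi_div:projection_problem} to the Kullback-Leibler divergence and then eliminate $\zeta$ in closed form.

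The first step is to compute the conjugate of $\phi(t) = t\log t - t + 1$ over $t \in \mathbb{R}_+$. The function $yt - t\log t + t - 1$ is strictly concave in $t$. Its stationarity condition $y - \log t = 0$ gives $t = \mathrm{e}^{y} > 0$, which is interior, so
\[
\phi^\star(y) = \mathrm{e}^{y} - 1 .
\]
Substituting this, and using $\mathbf{e}^\top \overline{\bm{p}}_{sa} = 1$, the objective of~\eqref{eq:phi_div:projection_problem} becomes
\[
-\beta\alpha + \zeta + 1 - \mathrm{e}^{\zeta}\sum_{s'} \overline{p}_{sas'} \mathrm{e}^{-\alpha b_{s'}} .
\]

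The second step is to maximise over $\zeta \in \mathbb{R}$ for fixed $\alpha$. Write $Z(\alpha) = \sum_{s'} \overline{p}_{sas'} \mathrm{e}^{-\alpha b_{s'}}$, which is strictly positive because $\overline{\bm{p}}_{sa} \in \Delta_S$. The function $\zeta + 1 - \mathrm{e}^{\zeta} Z(\alpha)$ is strictly concave and coercive in $\zeta$. Its maximiser satisfies $\mathrm{e}^{\zeta} Z(\alpha) = 1$, that is, $\zeta^\star(\alpha) = -\log Z(\alpha)$. Plugging this back gives the value
\[
-\log Z(\alpha) + 1 - 1 = -\log Z(\alpha),
\]
so the dual reduces to maximising $-\beta\alpha - \log Z(\alpha)$ over $\alpha \in \mathbb{R}_+$. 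This is exactly~\eqref{eq:kl_divergence_projection_problem}. Since $\sup_{\alpha,\zeta} = \sup_\alpha \sup_\zeta$, the optimal values coincide, and Proposition~\ref{prop:phi_div_dual} then equates them with the value of~\eqref{eq:gen_projection}.

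The third step is convexity: $\alpha \mapsto \log \sum_{s'} \overline{p}_{sas'} \mathrm{e}^{-\alpha b_{s'}}$ is a log-sum-exp of affine functions, hence convex. The objective is therefore concave, and the problem is a univariate convex optimization problem.

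I do not expect a real obstacle. The only points needing care are checking that the conjugate's supremum over $t \ge 0$ is attained in the interior, so that the boundary $t = 0$ does not change the formula, and confirming that the inner supremum in $\zeta$ is attained for every $\alpha$. Both follow from $\mathrm{e}^{y} > 0$ and $Z(\alpha) > 0$.
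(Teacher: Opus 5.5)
Your proposal is correct and follows essentially the same route as the paper: substitute the conjugate $\phi^\star(y) = \mathrm{e}^y - 1$ into the bivariate dual of Proposition~\ref{prop:phi_div_dual}, then eliminate $\zeta$ through the first-order condition $\zeta^\star = -\log\bigl(\sum_{s'} \overline{p}_{sas'} \mathrm{e}^{-\alpha b_{s'}}\bigr)$. Your additional checks fill in details the paper leaves implicit: the conjugate's supremum is interior, the inner supremum over $\zeta$ is attained, and the log-sum-exp structure makes the objective concave.
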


\begin{proof}
	Plugging the convex conjugate $\phi^{\star} (y) = e^y - 1$ of the Kullback-Leibler divergence into the bivariate optimization problem~\eqref{eq:phi_div:projection_problem}, we obtain
	\begin{equation*}
	\begin{array}{l@{\quad}l}
	\text{maximize} & \displaystyle - \beta \alpha + \zeta - \sum_{s' \in \mathcal{S}} \overline{p}_{sas'} \left( \mathrm{e}^{-\alpha b_{s'} + \zeta} - 1 \right) \\
	\text{subject to} & \displaystyle \alpha \in \mathbb{R}_+, \;\; \zeta \in \mathbb{R}.
	\end{array}
	\end{equation*}
	By rearranging terms, the objective function can be expressed as
	\begin{equation}\label{eq:kl_intermediate_obj_fct}
	1 - \beta \alpha + \zeta - e^{\zeta} \left( \sum_{s' \in \mathcal{S}} \overline{p}_{sas'} \cdot \mathrm{e}^{-\alpha b_{s'} } \right),
	\end{equation}
	and the first-order optimality condition shows that for fixed $\alpha \in \mathbb{R}_+$, the function is maximized by
	\begin{equation*}
	1- e^{\zeta^\star} \left( \sum_{s' \in \mathcal{S}} \overline{p}_{sas'} \cdot \mathrm{e}^{-\alpha b_{s'} } \right)  = 0 \;\; \Longleftrightarrow \;\; \zeta^\star = - \log \left( \sum_{s' \in \mathcal{S}} \overline{p}_{sas'} \cdot \mathrm{e}^{-\alpha b_{s'} } \right).
	\end{equation*}
	Substituting $\zeta^\star$ in~\eqref{eq:kl_intermediate_obj_fct}, we obtain problem~\eqref{eq:kl_divergence_projection_problem} as postulated.
\end{proof}

\begin{thm}\label{thm:kl_complexity}
If $\beta \geq \min \{ \bm{b} \} + \omega$ for some $\omega > 0$, then the projection problem~\eqref{eq:gen_projection} can be solved to $\delta$-accuracy in time $\mathcal{O} (S \cdot \log [\max \{ \bm{b} \} \cdot \log (\min \{ \overline{\bm{p}} \}^{-1})/ (\delta \omega)])$.
\end{thm}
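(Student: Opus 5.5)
We solve the univariate problem~\eqref{eq:kl_divergence_projection_problem} from Proposition~\ref{prop:kl_univariate_problem} by bisection on the sign of the derivative of its concave objective $g(\alpha) = -\beta\alpha - \log\bigl(\sum_{s'} \overline{p}_{sas'} e^{-\alpha b_{s'}}\bigr)$. Concavity holds because the log-sum-exp term is convex in $\alpha$. The derivative is
\[
g'(\alpha) = -\beta + \frac{\sum_{s'} \overline{p}_{sas'} b_{s'} e^{-\alpha b_{s'}}}{\sum_{s'} \overline{p}_{sas'} e^{-\alpha b_{s'}}},
\]
that is, $-\beta$ plus the mean of $\bm{b}$ under the exponentially tilted distribution $q(\alpha)$. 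This mean is non-increasing in $\alpha$, since its derivative is minus a variance. It equals $\bm{b}^\top\overline{\bm{p}}_{sa} > \beta$ at $\alpha = 0$ and tends to $\min\{\bm{b}\}$ as $\alpha \to \infty$. Because $\beta \geq \min\{\bm{b}\} + \omega$, the root $\alpha^\star$ of $g'$ is finite. Each evaluation of $g$ or $g'$ costs $\mathcal{O}(S)$, so bisection with $K$ iterations costs $\mathcal{O}(SK)$.

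\textbf{Step 1: an a priori upper bound $\overline{\alpha}$ on $\alpha^\star$.} Let $\mathcal{S}_{\min} = \arg\min \bm{b}$. The tilted mean is at most
\[
\min\{\bm{b}\} + \max\{\bm{b}\} \cdot q_{\mathcal{S}\setminus\mathcal{S}_{\min}}(\alpha).
\]
The tilted mass outside $\mathcal{S}_{\min}$ is bounded by
\[
\frac{e^{-\alpha(\min\{\bm{b}\} + \Delta)}}{\min\{\overline{\bm{p}}\}\, e^{-\alpha \min\{\bm{b}\}}} = \frac{e^{-\alpha\Delta}}{\min\{\overline{\bm{p}}\}},
\]
where $\Delta$ is the gap to the second-smallest value of $\bm{b}$. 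The gap $\Delta$ is uncontrolled, so I would not use it directly. Instead I would split the components by whether $b_{s'} \leq \min\{\bm{b}\} + \omega/2$. Components below this threshold contribute at most $\min\{\bm{b}\} + \omega/2$ to the mean. Components above it have total tilted mass at most $e^{-\alpha\omega/2}/\min\{\overline{\bm{p}}\}$ and each contributes at most $\max\{\bm{b}\}$. Hence $g'(\alpha) < 0$ as soon as
\[
\max\{\bm{b}\}\, e^{-\alpha\omega/2}/\min\{\overline{\bm{p}}\} \leq \omega/2,
\]
which gives
\[
\overline{\alpha} = \tfrac{2}{\omega}\log\bigl(2\max\{\bm{b}\}/(\omega\min\{\overline{\bm{p}}\})\bigr).
\]

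\textbf{Step 2: translating interval width into objective accuracy.} Bisection maintains $[\alpha_l, \alpha_u] \ni \alpha^\star$ with $g'(\alpha_l) \geq 0 \geq g'(\alpha_u)$. We report $\underline{d} = \max\{g(\alpha_l), g(\alpha_u)\}$, which is valid by weak duality. For the upper bound we use concavity:
\[
\overline{d} = g(\alpha_l) + g'(\alpha_l)(\alpha_u - \alpha_l) \geq g(\alpha^\star),
\]
since the tangent line at $\alpha_l$ dominates $g$. The gap is $\overline{d} - \underline{d} \leq g'(\alpha_l)(\alpha_u - \alpha_l)$. We have $|g'| \leq \max\{\bm{b}\}$ because $\beta \leq \max\{\bm{b}\}$ and the tilted mean lies in $[0, \max\{\bm{b}\}]$. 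It therefore suffices to reach width $\delta/\max\{\bm{b}\}$. Starting from width $\overline{\alpha}$, this takes
\[
K = \mathcal{O}\bigl(\log[\max\{\bm{b}\}\,\overline{\alpha}/\delta]\bigr)
\]
iterations. Substituting $\overline{\alpha}$ gives roughly $\log\bigl[\max\{\bm{b}\}\log(\max\{\bm{b}\}/(\omega\min\{\overline{\bm{p}}\}))/(\delta\omega)\bigr]$.

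\textbf{Main obstacle.} The delicate step is Step 1, both making the bound independent of the gap $\Delta$ and matching the exact form of the theorem. My bound has $\log(\max\{\bm{b}\}/(\omega\min\{\overline{\bm{p}}\}))$ inside the outer logarithm, whereas the statement has only $\log(\min\{\overline{\bm{p}}\}^{-1})$. The extra terms can be absorbed into the $\mathcal{O}$ of the outer logarithm: $\log\log$ of $\max\{\bm{b}\}/\omega$ is dominated by $\log(\max\{\bm{b}\}/(\delta\omega))$ once $\delta$ is small relative to $\max\{\bm{b}\}$. Absorbing them this way gives the claimed $\mathcal{O}\bigl(S\log[\max\{\bm{b}\}\log(\min\{\overline{\bm{p}}\}^{-1})/(\delta\omega)]\bigr)$. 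If that absorption is too lossy, the fallback is a sharper choice of the threshold in Step 1.
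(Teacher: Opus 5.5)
Your algorithm and your Step~2 match the paper's proof: bisection on the sign of $g'$ for the concave univariate dual~\eqref{eq:kl_divergence_projection_problem}, the tangent-line upper bound at the left endpoint, the bound $g' \le \max\{\bm{b}\}$, and termination once the width is $\delta/\max\{\bm{b}\}$. The gap is in Step~1 and in the final matching to the stated bound.

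Your bracket $\overline{\alpha} = \tfrac{2}{\omega}\log\bigl(2\max\{\bm{b}\}/(\omega\min\{\overline{\bm{p}}\})\bigr)$ is valid. However, it carries a spurious factor $\max\{\bm{b}\}/\omega$, and your condition for absorbing it (``$\delta$ small relative to $\max\{\bm{b}\}$'') is not the right one. The parameter $\delta$ measures accuracy of a divergence value, which does not scale with $\bm{b}$. For example, take $\bm{b} = (0, M)^\top$, $\overline{\bm{p}} = (1/2, 1/2)^\top$, $\beta = \omega = 1$ and $\delta = M/2$. As $M \to \infty$, your iteration count grows like $\log\log M$, while the theorem's expression stays at the constant $\log(2\log 2)$. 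Your fallback cannot remove the factor either. With any threshold $\min\{\bm{b}\} + \theta\omega$, the same argument gives $\tfrac{1}{\theta\omega}\log\bigl(\max\{\bm{b}\}/((1-\theta)\omega\min\{\overline{\bm{p}}\})\bigr)$. The factor comes from bounding every heavy component's contribution to the tilted mean by $\max\{\bm{b}\}$, not from where the threshold sits.

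The missing idea, which is what the paper uses, is to bracket $\alpha^\star$ by function values rather than by derivative signs. Set $\overline{\alpha}^0 = \log(1/\min\{\overline{\bm{p}}\})/(\beta - \min\{\bm{b}\})$. Keeping only an index $s^\star \in \arg\min\{\bm{b}\}$ in the sum gives $\sum_{s'} \overline{p}_{sas'} e^{-\overline{\alpha}^0 b_{s'}} \ge \min\{\overline{\bm{p}}\}\, e^{-\overline{\alpha}^0 \min\{\bm{b}\}} = e^{-\beta\overline{\alpha}^0}$, that is, $g(\overline{\alpha}^0) \le 0 = g(0)$. Since $g'(0) > 0$ and $g$ is concave, it follows that $\alpha^\star < \overline{\alpha}^0 \le \log(1/\min\{\overline{\bm{p}}\})/\omega$. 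The stated iteration count then follows directly, with no $\max\{\bm{b}\}/\omega$ term and no absorption step.

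Alternatively, you can rescue your own bracket with a case split:
\begin{itemize}
\item If $\delta \ge \log(1/\min\{\overline{\bm{p}}\})$, the pair $(0, \log(1/\min\{\overline{\bm{p}}\}))$ is already a valid answer, because the Dirac distribution at $s^\star$ is feasible.
\item Otherwise, the theorem's logarithm exceeds $\log(\max\{\bm{b}\}/\omega)$, which dominates your extra $\log\log(2\max\{\bm{b}\}/\omega)$.
\end{itemize}
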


Note that the projection problem~\eqref{eq:gen_projection} is infeasible if $\beta < \min \{ \bm{b} \}$. The condition in the statement of Theorem~\ref{thm:kl_complexity} can thus be interpreted as a strict feasibility requirement.

\begin{proof}[Proof of Theorem~\ref{thm:kl_complexity}]
We prove the statement in three steps. Step~1 shows that the optimal solution $\alpha^\star$ to~\eqref{eq:kl_divergence_projection_problem} is lower and upper bounded by $\underline{\alpha}^0 = 0$ and $\overline{\alpha}^0 = \log \left( \frac{1}{\min \{ \overline{\bm{p}} \}} \right) \cdot \frac{1}{\beta - \min \{ \bm{b} \}}$, respectively. Note that $\overline{\alpha}^0$ is finite due to the assumed strict positivity of $\min \{ \overline{\bm{p}} \}$ and $\beta - \min \{ \bm{b} \}$. Step~2 derives a global upper bound on the derivative of $f (\alpha)$, which we henceforth use to denote  of the objective function of problem~\eqref{eq:kl_divergence_projection_problem}. In conjunction with the concavity of $f$, this will allow us to bound the maximum objective function value over any interval $[ \underline{\alpha}, \overline{\alpha} ] \subseteq \mathbb{R}_+$. Step~3, finally, employs a bisection search to solve~\eqref{eq:kl_divergence_projection_problem} to $\delta$-accuracy in the stated complexity.

As for the first step, the validity of the lower bound $\underline{\alpha}^0$ follows directly from the non-negativity constraint in~\eqref{eq:kl_divergence_projection_problem}. In view of the upper bound $\overline{\alpha}^0$, we note that
\begin{align*}
    \overline{\alpha}^0 \; = \; \log \left(\frac{1}{\min\{ \overline{\bm{p}} \}} \right) \cdot \frac{1}{\beta - \min\{\bm{b}\}}
    \quad &\Longleftrightarrow \quad
    \min\{\overline{\bm{p}}\} \cdot \mathrm{e}^{\overline{\alpha}^0 (\beta - \min \{ \bm{b} \})} \; = \; 1 \\
    &\Longrightarrow \quad
    \sum_{s' \in \mathcal{S}} \overline{p}_{sas'} \cdot \mathrm{e}^{\overline{\alpha}^0 (\beta -b_{s'})} \; \geq \; 1 \\
    &\Longleftrightarrow \quad
    \sum_{s' \in \mathcal{S}} \overline{p}_{sas'} \cdot \mathrm{e}^{-\overline{\alpha}^0 b_{s'}} \; \geq \; \mathrm{e}^{- \beta \overline{\alpha}^0} \\
    &\Longleftrightarrow \quad
    \log \left( \sum_{s' \in \mathcal{S}} \overline{p}_{sas'} \cdot \mathrm{e}^{-\overline{\alpha}^0 b_{s'} } \right) \; \geq \; - \beta \overline{\alpha}^0 \\
    &\Longleftrightarrow \quad
    f (\overline{\alpha}^0) \; \leq \; 0.
\end{align*}
Since $f (0) = 0$ and $f (\overline{\alpha}^0) \leq 0$ while at the same time $\overline{\alpha}^0 > 0$, we conclude from the concavity of $f$ that $\overline{\alpha}^0$ is indeed a valid upper bound on the maximizer of problem~\eqref{eq:kl_divergence_projection_problem}.

In view of the second step, we observe that
\begin{equation*}
    f' (\alpha)
    \;\; \leq \;\;
    f' (0)
    \;\; = \;\;
    \frac{\sum_{s' \in \mathcal{S}} \overline{p}_{sas'} \cdot b_{s'}}{\sum_{s' \in \mathcal{S}} \overline{p}_{sas'}} - \beta
    \;\; \leq \;\;
    \overline{\bm{p}}_{sa}^\top \bm{b}
    \;\; \leq \;\;
    \max\{\bm{b}\},
    \qquad \forall \alpha \in \mathbb{R}_+,
\end{equation*}
where the first inequality follows from the concavity of $f$ and the other two inequalities hold since $\overline{\bm{p}}_{sa} \in \Delta_S$. The concavity of $f (\alpha)$ then implies that for any $\alpha \in [ \underline{\alpha}, \overline{\alpha} ] \subset \mathbb{R}_+$, we have
\begin{equation*}
    f (\underline{\alpha})
    \;\; \leq \;\;
    f (\alpha)
    \;\; \leq \;\;
    f (\underline{\alpha}) + f' (\underline{\alpha}) \cdot (\overline{\alpha} - \underline{\alpha})
    \;\; \leq \;\;
    f (\underline{\alpha}) + \max\{\bm{b}\} \cdot (\overline{\alpha} - \underline{\alpha}).
\end{equation*}
Thus, if we find $\underline{\alpha}$, $\overline{\alpha}$ sufficiently close such that $\alpha^\star \in [\underline{\alpha}, \overline{\alpha}]$, then we can closely bound the optimal objective value of~\eqref{eq:kl_divergence_projection_problem} from below and above by $f (\underline{\alpha})$ and $f (\underline{\alpha}) + \max\{\bm{b}\} \cdot (\overline{\alpha} - \underline{\alpha})$, respectively.

As for the third step, finally, we bisect on $\alpha$ by starting with the initial bounds $(\underline{\alpha}^0, \overline{\alpha}^0)$, halving the length of the interval $[\underline{\alpha}^i, \overline{\alpha}^i]$ in each iteration $i = 0, 1, \ldots$ by verifying whether $f' ([\underline{\alpha}^i + \overline{\alpha}^i] / 2)$ is positive and terminating once $\overline{\alpha}^i - \underline{\alpha}^i \leq \delta / \max\{\bm{b}\}$. Since $\beta - \min\{\bm{b}\} \geq \omega$, we have
\begin{equation*}
    \overline{\alpha}^0 - \underline{\alpha}^0
    \;\; = \;\;
    \log \left( \frac{1}{\min \{ \overline{\bm{p}} \} } \right) \cdot \frac{1}{\beta - \min \{ \bm{b} \}}
    \;\; \leq \;\;
    \frac{1}{\omega} \cdot \log \left( \frac{1}{\min \{ \overline{\bm{p}} \}} \right),
\end{equation*}
and thus the length of the interval no longer exceeds $\delta / \max\{\bm{b}\}$ once the iteration number $i$ satisfies
\begin{align*}
    2^{-i} \cdot (\overline{\alpha}^0 - \underline{\alpha}^0)
    \;\; \leq \;\;
    \frac{\delta }{\max\{\bm{b}\}}
    \quad \Longleftarrow& \quad
    2^{-i} \cdot \frac{1}{\omega} \cdot \log\left(\frac{1}{\min\{\overline{\bm{p}}\}}\right)
    \;\; \leq \;\;
    \frac{\delta }{\max\{\bm{b}\}} \\
    \quad \Longleftrightarrow& \quad
    i
    \;\; \geq \;\;
    \log_2 \left( \frac{\max\{\bm{b}\} \cdot \log (\min\{\overline{\bm{p}}\}^{-1})}{\delta \omega} \right),
\end{align*}
that is, after $\mathcal{O} (\log [\max \{ \bm{b} \} \cdot \log (\min\{\overline{\bm{p}}\}^{-1}) / (\delta \omega)])$ iterations. The interval $[f (\underline{\alpha}^i), f (\overline{\alpha}^i)]$ then provides the $\delta$-accurate solution to the projection problem~\eqref{eq:gen_projection}. The statement now follows from the fact that evaluating the derivative $f' ([\underline{\alpha}^i + \overline{\alpha}^i] / 2)$ in each bisection step takes $\mathcal{O} (S)$ time.
\end{proof}
The proof of Theorem~\ref{thm:overall_complexity:inexact_subproblem} employs an outer bisection over $\theta$ that requires for each $(s, a) \in \mathcal{S} \times \mathcal{A}$ the repeated solution of the projection problem~\eqref{eq:kl_divergence_projection_problem} with $\bm{b} = \bm{r}_{sa} + \lambda \bm{v}$ and $\beta = \theta \in [ \underline{R}_s (\bm{v}) + \frac{\epsilon}{2}, \overline{R} - \frac{\epsilon}{2}]$ (since the outer bisection is stopped when the interval length no longer exceeds $\epsilon$) to an accuracy of $\delta = \epsilon \kappa / [2 A \overline{R} + A \epsilon]$. In that case, for each $(s, a) \in \mathcal{S} \times \mathcal{A}$ we have
\begin{align*}
    \beta - \min \{ \bm{b} \}
    \;\; &\geq \;\;
    \underline{R}_s (\bm{v}) + \frac{\epsilon}{2} - \min \{ \bm{r}_{sa} + \lambda \bm{v} \} \\
    &\geq \;\;
    \max_{a \in \mathcal{A}} \min_{s' \in \mathcal{S}} \{ r_{sas'} + \lambda v_{s'} \} + \frac{\epsilon}{2} - \max_{a \in \mathcal{A}} \min_{s' \in \mathcal{S}} \{ r_{sas'} + \lambda v_{s'} \}
    \;\; = \;\;
    \frac{\epsilon}{2}
\end{align*}
and $\max \{ \bm{b} \} \leq \overline{R}$. Plugging those estimates into the statement of Theorem~\ref{thm:kl_complexity}, we see that the projection problem~\eqref{eq:kl_divergence_projection_problem} is solved in time $h (\epsilon \kappa / [2 A \overline{R} + A \epsilon]) = \mathcal{O} (S \cdot \log [A \overline{R}^2 \cdot \log (\min \{ \overline{\bm{p}} \}^{-1})/ (\epsilon^2 \kappa)])$.

\begin{rem}
The bounds $\underline{\alpha}^0 = 0$ and $\overline{\alpha}^0
= \log(\min\{\overline{\bm{p}}\}^{-1}) / (\beta - \min\{\bm{b}\})$ on $\alpha^\star$ established in the proof of Theorem~\ref{thm:kl_complexity} are tight up to constant factors. To see this, consider an instance with $S = 2$, $A = 1$, $\beta = 1.5$, $\bm b = (1, 2)^\top$, and $\overline{\bm p} = (P, 1-P)^\top$ for some $P \in (0, 0.5)$. This instance satisfies the strict feasibility assumption since $\min\{\bm b\} = 1 < \beta$. The dual objective \eqref{eq:kl_divergence_projection_problem} simplifies to
\[
    g(\alpha) = -1.5 \alpha - \log\left( P \mathrm{e}^{-\alpha} + (1-P) \mathrm{e}^{-2\alpha} \right).
\]
Since $g$ is strictly concave, its unique maximizer $\alpha^\star \in \mathbb{R}_+$ is characterized by the stationarity condition $g'(\alpha^\star) = 0$, which yields
\[
    -1.5 - \frac{ - P \mathrm{e}^{-\alpha} - 2 (1-P) \mathrm{e}^{-2\alpha} }{ P \mathrm{e}^{-\alpha} + (1-P) \mathrm{e}^{-2\alpha} } = 0
    \quad \Longleftrightarrow \quad
    \alpha^\star = \log\left( \frac{1-P}{P} \right).
\]
As $P \to 0$, we have $\min \{ \overline{\bm p} \} = P$ and $\alpha^\star = \Theta( \log(1/P) )$. This matches the asymptotic growth of the upper bound derived in Theorem~\ref{thm:kl_complexity}, which scales with $\log(\min \{ \overline{\bm p} \}^{-1})$. Conversely, as $P \to 0.5$, we have $\alpha^\star \to 0$, which recovers the lower bound of $0$.
\end{rem}

\subsection{Burg Entropy}

We next specialize the general $\phi$-divergence formulation to the Burg
entropy, which corresponds to the choice
$\phi(t) = -\log t + t - 1$.

\begin{prop}
	For the Burg entropy $\phi (t) = - \log t + t - 1$, if $\beta > \min \{ \bm{b} \}$, then the optimal value of the projection problem~\eqref{eq:gen_projection} equals the optimal value of the univariate convex optimization problem
	\begin{equation}\label{eq:burg_entropy_projection_problem}
	\begin{array}{l@{\quad}l}
	\text{\emph{maximize}} & \displaystyle \sum_{s' \in \mathcal{S}} \overline{p}_{sas'} \cdot \log \left( 1 + \alpha \frac{b_{s'} - \beta}{\beta - \min \{ \bm{b} \}} \right) \\
	\text{\emph{subject to}} & \displaystyle \alpha \leq 1 \\
	& \displaystyle \alpha \in \mathbb{R}_+.
	\end{array}
	\end{equation}
\end{prop}

\begin{proof}
	Plugging the convex conjugate $\phi^{\star} (y) = - \log (1 - y)$ of the Burg entropy into the bivariate optimization problem~\eqref{eq:phi_div:projection_problem}, we obtain
	\begin{equation}\label{eq:burg_intermediate_problem}
	\begin{array}{l@{\quad}l}
	\text{maximize} & \displaystyle - \beta \alpha + \zeta + \sum_{s' \in \mathcal{S}} \overline{p}_{sas'} \cdot \log (1 + \alpha b_{s'} - \zeta) \\
	\text{subject to} & \displaystyle 1 + \alpha \min \{ \bm{b} \} \geq \zeta \\
	& \displaystyle \alpha \in \mathbb{R}_+, \;\; \zeta \in \mathbb{R}.
	\end{array}
	\end{equation}
	Here, the first constraint ensures that the logarithms in the objective function are well-defined (as usual, we assume that $\log 0 = -\infty$). Unlike the proof of Proposition~\ref{prop:kl_univariate_problem}, the first-order optimality condition of this problem's objective function does not lend itself to extracting the optimal value of $\zeta$. Instead, we consider the Karush-Kuhn-Tucker conditions for problem~\eqref{eq:burg_intermediate_problem}, which are:
	\begin{equation*}
	\begin{array}{l@{\qquad}r}
	\displaystyle \sum_{s' \in \mathcal{S}} \overline{p}_{sas'} \cdot \frac{b_{s'}}{1 + \alpha b_{s'} - \zeta} = \beta - \eta \min \{ \bm{b} \} - \gamma & \text{(Stationarity)} \\
	\displaystyle \sum_{s' \in \mathcal{S}} \overline{p}_{sas'} \cdot \frac{1}{1 + \alpha b_{s'} - \zeta} = 1 - \eta & \text{(Stationarity)} \\
	1 + \alpha \min \{ \bm{b} \} - \zeta \geq 0, \;\; \alpha \in \mathbb{R}_+, \;\; \zeta \in \mathbb{R} & \text{(Primal Feasibility)} \\
	\eta, \gamma \in \mathbb{R}_+ & \text{(Dual Feasibility)} \\
	\eta (1 + \alpha \min \{ \bm{b} \} - \zeta) = 0, \;\; \alpha \gamma = 0 & \text{(Complementary Slackness)}
	\end{array}
	\end{equation*}
	The optimal value of problem~\eqref{eq:burg_intermediate_problem} is non-negative since $(\alpha, \zeta) = \bm{0}$ satisfies the constraints of~\eqref{eq:burg_intermediate_problem}. Hence, complementary slackness implies that $\eta^\star = 0$, as otherwise $1 + \alpha^\star \min \{ \bm{b} \} - \zeta^\star = 0$ would imply that the optimal objective value of problem~\eqref{eq:burg_intermediate_problem} was $- \infty$. Multiplying the first stationarity condition with $\alpha^\star$ and the second one with $1 - \zeta^\star$ and summing up then yields
	\begin{align*}
	& \alpha^\star \left( \sum_{s' \in \mathcal{S}} \overline{p}_{sas'} \cdot \frac{b_{s'}}{1+\alpha^\star b_{s'} - \zeta^\star} \right) + (1 - \zeta^\star) \left( \sum_{s' \in \mathcal{S}} \overline{p}_{sas'} \cdot \frac{1}{1+\alpha^\star b_{s'} - \zeta^\star} \right) \; = \; \alpha^\star (\beta - \gamma^\star) + (1 - \zeta^\star) \\
	~\mspace{-25mu} \Longleftrightarrow \quad
	& \sum_{s' \in \mathcal{S}} \overline{p}_{sas'} \cdot \frac{1+\alpha^\star b_{s'} - \zeta^\star}{1+\alpha^\star b_{s'} - \zeta^\star} \; = \; \alpha^\star (\beta - \gamma^\star) + (1 - \zeta^\star)
	\quad \Longleftrightarrow \quad
	\zeta^\star = \alpha^\star \beta,
	\end{align*}
	where the right-hand side of the first line exploits the fact that $\eta^\star = 0$ and the last equivalence uses complementary slackness to replace $\alpha^\star \gamma^\star$ with $0$. The result now follows from substituting $\zeta^\star$ with $\alpha^\star \beta$ in problem~\eqref{eq:burg_intermediate_problem} and rescaling $\alpha$ via $\alpha \leftarrow (\beta - \min \{ \bm{b} \}) \alpha$.
\end{proof}

\begin{thm}\label{thm:burg_complexity}
    If $\beta \geq \min \{ \bm{b} \} + \omega$ for some $\omega > 0$, then the projection problem~\eqref{eq:gen_projection} can be solved to $\delta$-accuracy in time $\mathcal{O} (S \cdot \log [\max \{ \bm{b} \} / (\delta \omega)])$.
\end{thm}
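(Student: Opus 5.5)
The plan follows the three-step template of the proof of Theorem~\ref{thm:kl_complexity}, applied to the univariate problem~\eqref{eq:burg_entropy_projection_problem}. Denote its objective by
\[
f(\alpha) = \sum_{s'} \overline{p}_{sas'} \log\bigl(1 + \alpha c_{s'}\bigr), \qquad c_{s'} = \frac{b_{s'} - \beta}{\beta - \min\{\bm{b}\}} .
\]
Every $c_{s'}$ satisfies $c_{s'} \geq -1$, with equality exactly when $b_{s'} = \min\{\bm{b}\}$. Hence $f$ is finite and concave on $[0,1)$, and it may tend to $-\infty$ as $\alpha \to 1$. In contrast to the KL case, no separate a priori bound on the maximizer is needed: the feasible region is already the bounded interval $[\underline{\alpha}^0, \overline{\alpha}^0] = [0,1]$, whose length does not depend on $\overline{\bm{p}}$. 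This is why the factor $\log(\min\{\overline{\bm{p}}\}^{-1})$ does not appear in the complexity bound.

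\textbf{Step 1: derivative bound.} The derivative is $f'(\alpha) = \sum_{s'} \overline{p}_{sas'}\, c_{s'}/(1+\alpha c_{s'})$, and concavity gives $f'(\alpha) \le f'(0)$ for all $\alpha \in [0,1)$. Next I bound $f'(0)$:
\[
f'(0) = \frac{\overline{\bm{p}}_{sa}{}^\top \bm{b} - \beta}{\beta - \min\{\bm{b}\}} \leq \frac{\max\{\bm{b}\}}{\omega} .
\]
From this, for any $[\underline{\alpha}, \overline{\alpha}] \subseteq [0,1)$ containing the maximizer, I obtain the sandwich
\[
f(\underline{\alpha}) \le \max f \le f(\underline{\alpha}) + \frac{\max\{\bm{b}\}}{\omega}\,(\overline{\alpha} - \underline{\alpha}).
\]

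\textbf{Step 2: bisection.} I bisect on the sign of $f'$ at the midpoint, starting from $[0,1]$. Each iteration halves the interval and costs $\mathcal{O}(S)$. I stop once the length is at most $\delta\omega/\max\{\bm{b}\}$, which takes $\mathcal{O}(\log[\max\{\bm{b}\}/(\delta\omega)])$ iterations and gives the claimed total $\mathcal{O}(S \log[\max\{\bm{b}\}/(\delta\omega)])$. The output pair is $(f(\underline{\alpha}), f(\underline{\alpha}) + \max\{\bm{b}\}(\overline{\alpha}-\underline{\alpha})/\omega)$.

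\textbf{Main obstacle: the endpoint $\alpha = 1$.} If the maximizer lies at or near $1$, the right endpoint $\overline{\alpha}$ may equal $1$, where $f$ could be $-\infty$. The sandwich, however, only evaluates $f$ at $\underline{\alpha}$. After the first halving, $\underline{\alpha} \le 1/2$ or $\underline{\alpha}$ is a midpoint strictly below $1$, so $f(\underline{\alpha})$ is always finite. The bound on $f$ over $[\underline{\alpha}, \overline{\alpha}]$ uses the tangent at $\underline{\alpha}$, and this remains valid on the half-open interval by concavity. If every $c_{s'} > -1$, then $f$ extends continuously to $\alpha = 1$ and nothing more is needed.

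What still has to be checked carefully is the value equivalence. Substituting $\zeta^\star = \alpha^\star\beta$ and rescaling in the preceding proposition should turn the objective of~\eqref{eq:burg_intermediate_problem} into exactly $f$, with no additive constant, so that the $\delta$-bracket of $f$ is also a $\delta$-bracket of $\mathfrak{P}$. I would also confirm that the sign test at the midpoint (strict positivity of $f'$) correctly localizes the maximizer even when $f'$ vanishes on a plateau.
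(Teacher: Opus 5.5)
Your proposal is correct and follows the paper's proof essentially step for step: the bound $f'(\alpha)\le f'(0)\le \max\{\bm b\}/\omega$, the concavity sandwich $[f(\underline{\alpha}),\,f(\underline{\alpha})+(\max\{\bm b\}/\omega)(\overline{\alpha}-\underline{\alpha})]$, and bisection on $[0,1]$ down to width $\delta\omega/\max\{\bm b\}$. The only difference is in handling the endpoint: the paper first shows $f'(0)>0$ and $f'(\alpha)<0$ near $1$ to place the maximizer in $(0,1)$, while you observe that the bracket only evaluates $f$ at the left endpoint. Note that your case ``every $c_{s'}>-1$'' is vacuous because $\min\{\bm b\}$ is attained, and with $\overline{\bm p}_{sa}>\mathbf{0}$ one always has $f\to-\infty$ as $\alpha\uparrow 1$.
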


\begin{proof}
    Similar to the proof of Theorem~\ref{thm:kl_complexity}, we show the statement in three steps. Step~1 argues that $f (\alpha)$, which we henceforth use to denote the objective function of problem~\eqref{eq:burg_entropy_projection_problem}, is well-defined and continuously differentiable on the half-open interval $\alpha \in [0, 1)$ with a positive derivative at $0$ and a negative derivative close to $1$, respectively. This ensures that the optimum is attained on the open interval $\alpha \in (0, 1)$. Step~2 derives a global upper bound on $f' (\alpha)$, which will allow us to bound the maximum objective function value over any interval $[\underline{a}, \overline{\alpha}] \subseteq \mathbb{R}_+$ due to the concavity of $f$. Step~3, finally, employs a bisection search to solve~\eqref{eq:burg_entropy_projection_problem} to $\delta$-accuracy in the stated complexity.

    In view of the first step, we note that for $\alpha \in [0, 1)$ we have
    \begin{equation*}
        \begin{array}{r@{}l@{}l}
            \displaystyle (1 - \alpha) (\beta - \min \{ \bm{b} \}) > 0
            \quad \Longleftrightarrow& \quad
            \displaystyle \beta - \min \{ \bm{b} \} + \alpha (\min \{ \bm{b} \} - \beta) > 0 \\
            \Longrightarrow& \quad
            \displaystyle \beta - \min \{ \bm{b} \} + \alpha ( b_{s'} - \beta) > 0, & \forall s' \in \mathcal{S}, \\
            \Longleftrightarrow& \quad
            \displaystyle 1 + \alpha \frac{b_{s'} - \beta}{\beta - \min \{ \bm{b} \}} > 0, & \forall s' \in \mathcal{S},
        \end{array}
    \end{equation*}
    and thus the expression inside the logarithm of $f (\alpha)$ is strictly positive for all $s' \in \mathcal{S}$. Here, the first inequality holds by assumption, and the last equivalence follows from a division by $\beta - \min \{ \bm{b} \}$, which is strictly positive by assumption. We then observe that for $\alpha \in [0, 1)$, we have
    \begin{equation*}
        \mspace{-12mu}
        f' (\alpha)
        \;\; = \;\;
        \sum_{s' \in \mathcal{S}} \overline{p}_{sas'} \left[ \left( 1 + \alpha \frac{b_{s'} - \beta}{\beta - \min \{ \bm{b} \}} \right)^{-1} \cdot \frac{b_{s'} - \beta}{\beta - \min \{ \bm{b} \}} \right]
        \;\; = \;\;
        \sum_{s' \in \mathcal{S}} \overline{p}_{sas'} \left[ \frac{b_{s'} - \beta}{\beta - \min \{ \bm{b} \} + \alpha (b_{s'} - \beta)} \right].
    \end{equation*}
    In particular, we have $f' (0) = (\overline{\bm{p}}_{sa}{}^\top \bm{b} - \beta) / (\beta - \min \{ \bm{b} \})$, which is positive since $\beta \in \left( \min \{ \bm{b} \}, \; \overline{\bm{p}}_{sa}{}^\top \bm{b} \right)$ by assumption. (Recall that the projection problem is trivial if $\overline{\bm{p}}_{sa}{}^\top \bm{b} \leq \beta$.) For $\alpha \uparrow 1$, on the other hand, the fractions in $f' (\alpha)$ corresponding to the indices $s' \in \mathcal{S}$ with $b_{s'} = \min \{ \bm{b} \}$ evaluate to $1 / (\alpha - 1) \longrightarrow - \infty$, whereas the other fractions evaluate to
    \begin{equation*}
        \frac{b_{s'} - \beta}{(\beta - \min \{ \bm{b} \}) (1 - \alpha) + \alpha (b_{s'} - \min \{ \bm{b} \})}
        \; \longrightarrow \;
        \frac{b_{s'} - \beta}{\alpha (b_{s'} - \min \{ \bm{b} \})}
    \end{equation*}
    and thus remain finite. In conclusion, we have $f' (\alpha) < 0 $ for $\alpha$ near $1$.

    As for the second step, we observe that
    \begin{equation*}
        f' (\alpha)
        \;\; \leq \;\;
        f' (0)
        \;\; = \;\;
        \frac{\overline{\bm{p}}_{sa}{}^\top \bm{b} - \beta}{\beta - \min \{ \bm{b} \}}
        \;\; \leq \;\;
        \frac{\max \{ \bm{b} \}}{\beta - \min \{ \bm{b} \}}
        \;\; \leq \;\;
        \frac{\max \{ \bm{b} \}}{\omega},
    \end{equation*}
    where the inequalities follow from the concavity of $f$, the fact that $\overline{\bm{p}}_{sa} \in \Delta_S$ as well as $\beta \geq 0$, and because $\beta - \min \{ \bm{b} \} \geq \omega$, respectively. Similar arguments as in the proof of Theorem~\ref{thm:kl_complexity} then allow us to closely bound the optimal value of problem~\eqref{eq:burg_entropy_projection_problem} from below and above by $f (\underline{\alpha})$ and $f (\underline{\alpha}) + (\max \{ \bm{b} \} / \omega) \cdot (\overline{\alpha} - \underline{\alpha})$, respectively, whenever $\alpha^\star \in [\underline{\alpha}, \overline{\alpha}]$.

    In view of the third step, finally, we bisect on $\alpha$ by starting with the initial bounds $(\underline{\alpha}^0, \overline{\alpha}^0) = (0, 1)$, halving the length of the interval $[\underline{\alpha}^i, \overline{\alpha}^i]$ in each iteration $i = 0, 1, \ldots$ by verifying whether $f' ([\underline{\alpha}^i + \overline{\alpha}^i]/2)$ is positive and terminating once $\overline{\alpha}^i - \underline{\alpha}^i \leq \delta \omega / \max \{ \bm{b} \}$. Similar arguments as in the proof of Theorem~\ref{thm:kl_complexity} show that this is the case after $\mathcal{O} (\log [\max \{ \bm{b} \} / (\delta \omega)])$ iterations. The statement now follows since evaluating the derivative $f' ([\underline{\alpha}^i + \overline{\alpha}^i] / 2)$ in each bisection step takes time $\mathcal{O} (S)$.
\end{proof}

The proof of Theorem~\ref{thm:overall_complexity:inexact_subproblem} employs an outer bisection over $\theta$ that requires for each $(s, a) \in \mathcal{S} \times \mathcal{A}$ the repeated solution of the projection problem~\eqref{eq:kl_divergence_projection_problem} with $\bm{b} = \bm{r}_{sa} + \lambda \bm{v}$ and $\beta = \theta \in [ \underline{R}_s (\bm{v}) + \frac{\epsilon}{2}, \overline{R} - \frac{\epsilon}{2}]$ (since the outer bisection is stopped when the interval length no longer exceeds $\epsilon$) to an accuracy of $\delta = \epsilon \kappa / [2 A \overline{R} + A \epsilon]$. In that case, for each $(s, a) \in \mathcal{S} \times \mathcal{A}$ we have $\max \{ \bm{b} \} \leq \overline{R}$. Plugging this estimate into the statement of Theorem~\ref{thm:burg_complexity}, we see that the projection problem~\eqref{eq:burg_entropy_projection_problem} is solved in time $h (\epsilon \kappa / [2 A \overline{R} + A \epsilon]) = \mathcal{O} (S \cdot \log [A \overline{R}^2 / (\epsilon^2 \kappa)])$.

\section{Numerical Results}\label{sec:numericals}

We study the empirical performance of our unified solution framework for robust Markov decision processes with $s$-rectangular ambiguity sets and compare it against three state-of-the-art commercial solvers as well as a tailored homotopy-based solution scheme from the literature. Our experiments are conducted on both synthetically generated instances and standard benchmark instances. The synthetic instances enable us to investigate how runtimes scale with the number of states $S$ and actions $A$, whereas the benchmark instances capture structural features of real-world MDPs---such as heterogeneous transition dynamics, uneven reward distributions and sparsity patterns---that are difficult to reproduce in purely synthetic settings.

All experiments were run on AMD EPYC 7742 cluster nodes with 32GB of RAM using a single computational thread throughout. All algorithms were implemented in \texttt{C++} (GCC~8.5.0) and compiled with the \texttt{-O3} optimization flag. We used CPLEX~22.1.1, Gurobi~13.0 and MOSEK~11.1 as commercial solvers. Apart from restricting all solvers to single-threaded execution, we relied on their default parameter settings.

In the following, we first investigate the performance of our projection algorithms (Section~\ref{sec:numericals-projection}). We then utilize Theorems~\ref{thm:overall_complexity:exact_subproblem} and~\ref{thm:overall_complexity:inexact_subproblem} to obtain runtimes for the robust Bellman iteration from Section~\ref{sec:numericals-bellman}. We close with end-to-end solution times for a robust value iteration on benchmark instances in Section~\ref{sec:numericals-vi}. Additional numerical results are deferred to Appendix~B. All source codes, instances and results can be found on the GitHub repository accompanying this work.\footnote{GitHub repository: \url{https://github.com/wolframwi/fast-robust-mdps}.}
 
\subsection{Projection Problems}\label{sec:numericals-projection}

We begin by evaluating the performance of our algorithms for the generalized $d_a$-projection problems. To this end, we generate synthetic instances with $S \in \{10, 20, \ldots, 100\}$ states and $A \in \{10, S\}$ actions. For every state–action pair $(s,a)$ in an instance, the next-state support size is set to $k = \max\{2, \lceil 0.30\,S \rceil\}$. A support set of size $k$ is sampled uniformly without replacement, and the corresponding transition probabilities are drawn from a symmetric Dirichlet distribution with all concentration parameters equal to $\eta = 1$. All non-support next states are assigned a transition probability of zero. Rewards are sampled independently according to $r(s, a, s') \sim \mathcal{U}[0,1]$. The initial state distribution is uniform, $p^0(s)=1/S$, and the discount factor is fixed to $\lambda=0.99$.

Ambiguity is specified using uniform weights $\sigma(s,a,s')=1$ for both the $\ell_1$- and $\ell_2$-norm ambiguity sets. Uncertainty is calibrated via a target total-variation (TV) radius per transition row. Recall that the TV distance between two distributions $\bm{p}$ and $\bm{q}$ is defined as $\mathrm{TV}(\bm{p},\bm{q}) := \tfrac{1}{2}\lVert \bm{p}-\bm{q} \rVert_1$. We set the TV radius to $\tau=0.05$ and allow for all perturbations that satisfy $\mathrm{TV}(\bm{p},\bm{q}) \le \tau$, which corresponds to an $\ell_1$ ambiguity radius of $\rho = 2\tau$. The ambiguity radii for the $\ell_2$-norm, the KL divergence and the Burg entropy are chosen so as to induce uncertainty on the same TV scale. For the KL divergence and Burg entropy, this calibration is based on Pinsker's inequality, which implies $\mathrm{TV}(\bm{p},\bm{q}) \le \tfrac{1}{2}\sqrt{D_{\mathrm{KL}}(\bm{p}|\bm{q})}$; matching this upper bound to the target TV radius $\tau$ yields KL and Burg radii of $(2\tau)^2/2$. For the $\ell_2$-norm, we adopt the same quadratic scaling, resulting in an $\ell_2$ radius of $(2\tau)^2$, which provides a comparable uncertainty budget under uniform weights. This yields $\rho=0.10$ for the $\ell_1$-norm, $\rho=0.01$ for the $\ell_2$-norm, and $\rho=0.005$ for KL the divergence and the Burg entropy. For each instance size, we generate 10 independent instances.

Our projection algorithms (referred to as `fast' in the figures and tables henceforth) use tolerances of $10^{-12}$ for slope comparisons and $10^{-10}$ for bisection. For each instance, we solve one projection problem for every state–action pair $(s,a)$, resulting in $S\cdot A$ projections per instance. The value vector $\bm{v}$ is generated once per instance, with its components drawn uniformly at random from the interval $[0, \overline{R}]$. For each projection associated with $(s,a)$, the projection constraint is set to $\beta = \tfrac{1}{2}\bigl(\overline{\bm{p}}_{sa}{}^\top \bm{v} + \min_{s' \in \mathcal{S}} v_{s'}\bigr)$ if $\overline{\bm{p}}_{sa}{}^\top \bm{v} > \min_{s' \in \mathcal{S}} v_{s'}$ and to $\overline{\bm{p}}_{sa}{}^\top \bm{v}$ otherwise.

Figure~\ref{fig:projection} compares the runtimes of our projection
algorithms against those obtained by solving the corresponding
projection problems using commercial solvers. Since MOSEK is the only solver among those considered that natively supports the logarithmic expressions arising in the KL divergence and Burg entropy, comparisons in those settings are restricted to that solver. We report median runtimes across the 10 instances and $S \cdot A$ projections for each problem size. Speedups for the $\ell_1$-norm projection problem range from 240$\times$–650$\times$ (CPLEX), 270$\times$–450$\times$ (Gurobi), and 510$\times$–1{,}500$\times$ (MOSEK). For the $\ell_2$-norm, speedups are in the range
40$\times$–450$\times$ (CPLEX), 40$\times$–240$\times$ (Gurobi), and
110$\times$–1{,}200$\times$ (MOSEK). For KL divergence, speedups are
100$\times$–200$\times$ (MOSEK), while for Burg entropy they are
570$\times$–730$\times$ (MOSEK).

\begin{figure}[tb]
    \begin{tabular}{cc}
        \includegraphics[width = 0.5 \textwidth]{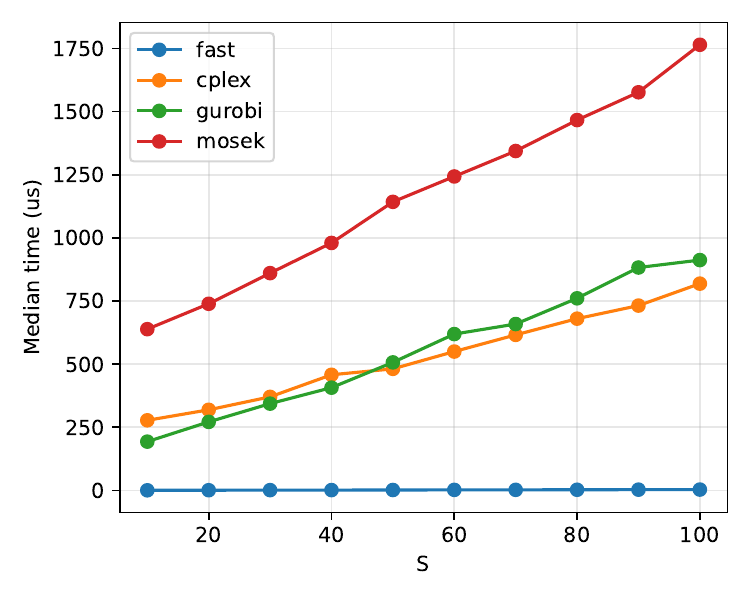} &
        \includegraphics[width = 0.5 \textwidth]{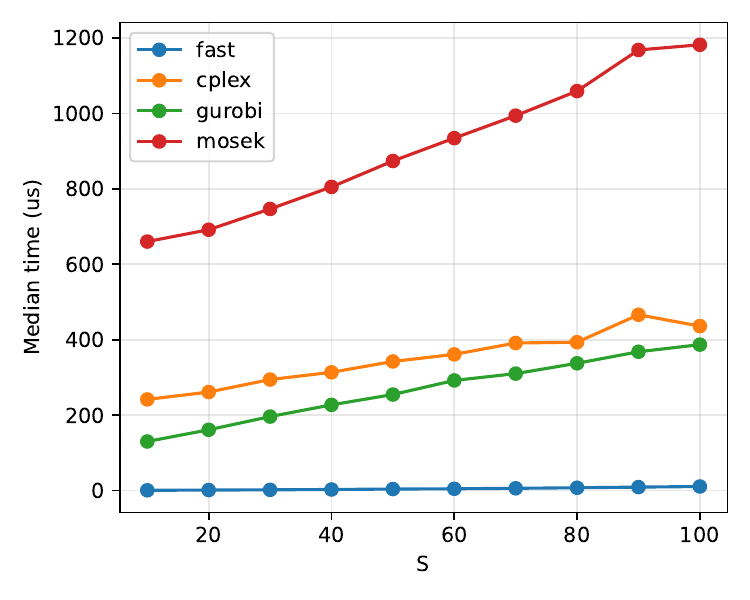} \\
        \includegraphics[width = 0.5 \textwidth]{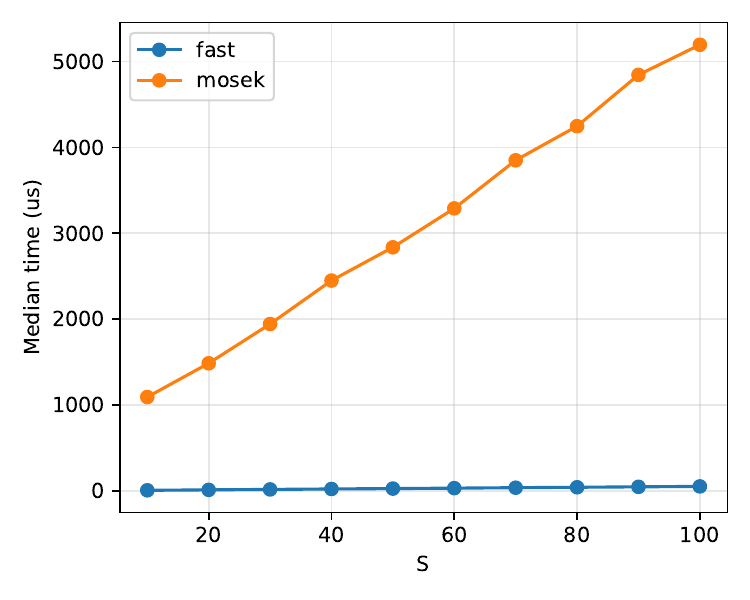} &
        \includegraphics[width = 0.5 \textwidth]{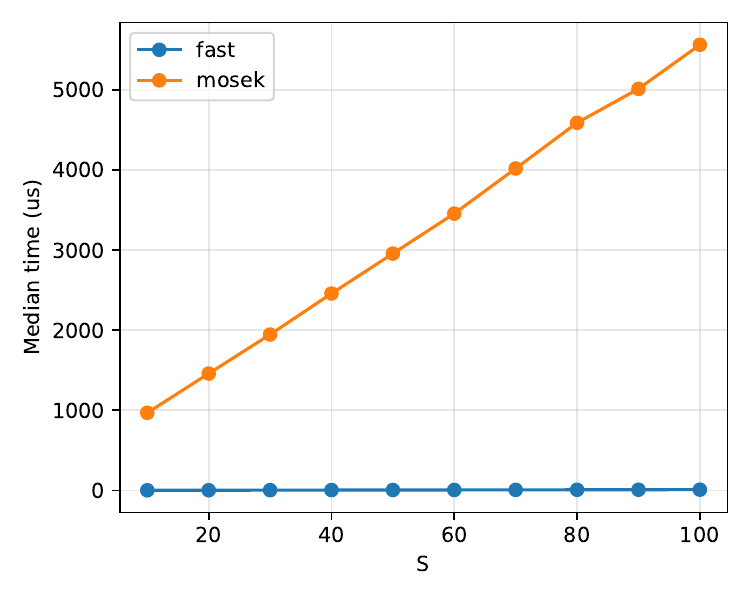}
    \end{tabular}
    \vspace{-0.75cm}
    \caption{Median runtimes (in $\mu\text{s}$) of the projection problems for the $\ell_1$-norm (top left), $\ell_2$-norm (top right), KL divergence (bottom left), and Burg entropy (bottom right). \label{fig:projection}}
\end{figure}

\subsection{Robust Bellman Operator}\label{sec:numericals-bellman}

We next turn to the evaluation of the robust Bellman operator. For each
instance, we sample 100 state–action pairs $(s,a)$ uniformly at random
and measure the runtime of a single robust Bellman update for each
sample. As before, the value vector $\bm{v}$ is generated once per instance, with its components drawn uniformly at random from the interval $[0, \overline{R}]$. For each problem size, we report the median runtime across the 100 samples and 10 instances. The results are shown in Figures~\ref{fig:bellman-l1}--\ref{fig:bellman-burg}.

For $\ell_1$-norm ambiguity, speedups of the robust Bellman operator are
of the order $10\times$--$430\times$ relative to CPLEX,
$10\times$--$280\times$ relative to Gurobi, and $40\times$--$250\times$
relative to MOSEK. Under $\ell_2$-norm ambiguity, the corresponding
speedups are of the order $10\times$--$260\times$ (CPLEX),
$6\times$--$70\times$ (Gurobi), and $4\times$--$120\times$ (MOSEK). For
KL divergence and Burg entropy, comparisons are restricted to MOSEK, with
speedups of the order $30\times$--$300\times$ and $70\times$--$600\times$,
respectively.

The speedups observed at the level of the robust Bellman operator closely track those obtained for the underlying projection problems, but they are systematically damped. Indeed, our solver-based implementations employ structured reformulations of the robust Bellman operator. Specifically, we invoke the minimax theorem to first exchange the order of the maximization over $\bm{\pi}_s$ and the minimization over $\bm{p}_s$ in~\eqref{eq:rob_value_it}, which allows us to subsequently replace the maximization over randomized policies $\bm{\pi}_s$ with a maximization over deterministic actions $a \in \mathcal{A}$. This reformulation admits an efficient epigraph representation and enables the solvers to better exploit problem structure. As a result, the relative performance gap at the Bellman operator level is smaller than for the isolated projection problems, while the speedups remain significant.

In addition to the three commercial solvers, we compare against the homotopy continuation method of \cite{HPW18:fast_bellman} in the $\ell_1$-norm setting, which is the only ambiguity class supported by that approach. While the homotopy method is competitive on small instances, our robust Bellman evaluation increasingly outperforms it as the problem size grows, achieving speedups of up to $30\times$ on the largest instances. This result is consistent with the theoretical complexity analysis of both algorithms.

\begin{figure}[tb]
    \begin{tabular}{cc}
        \includegraphics[width = 0.5 \textwidth]{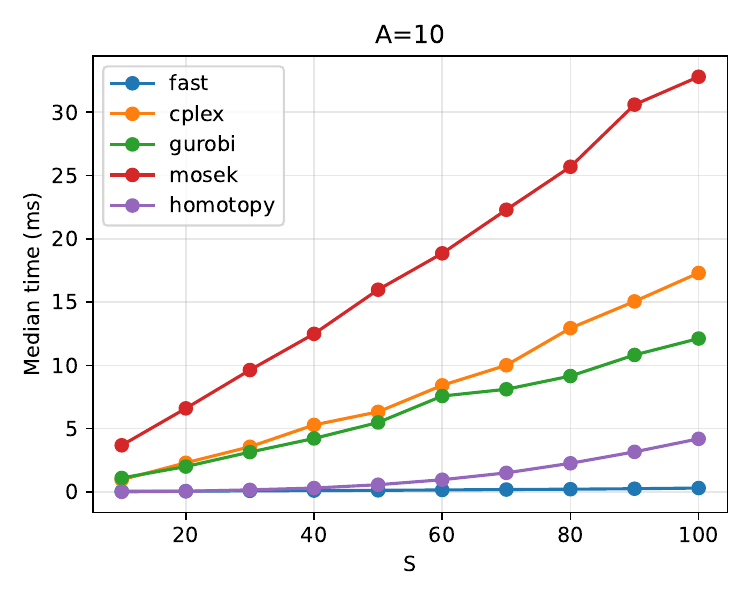} &
        \includegraphics[width = 0.5 \textwidth]{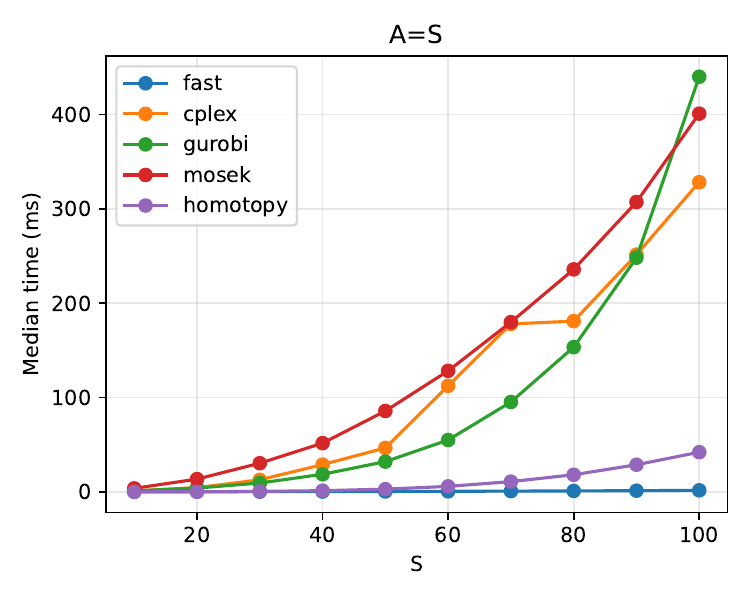}
    \end{tabular}
    \vspace{-0.75cm}
    \caption{Median runtimes (in ms) of the robust Bellman operator under $\ell_1$-norm ambiguity, with $A = 10$ actions (left) and $A = S$ actions (right).
\label{fig:bellman-l1}}
\end{figure}

\begin{figure}[tb]
    \begin{tabular}{cc}
        \includegraphics[width = 0.5 \textwidth]{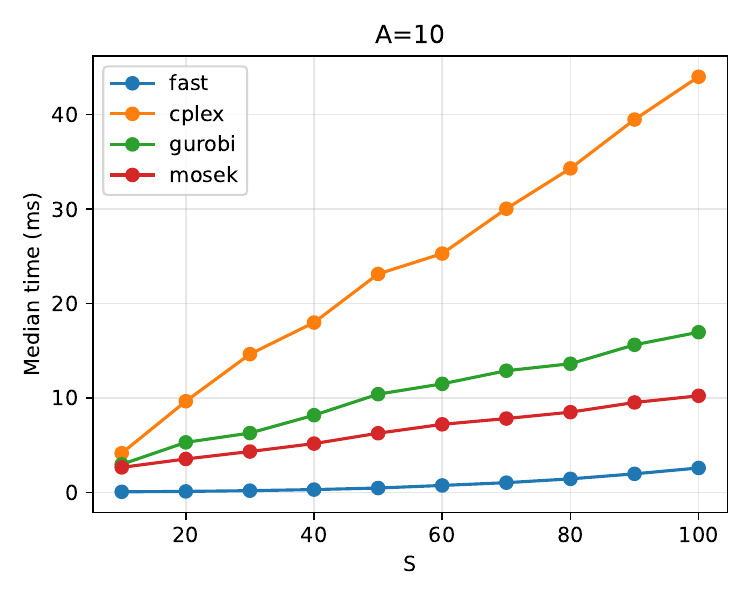} &
        \includegraphics[width = 0.5 \textwidth]{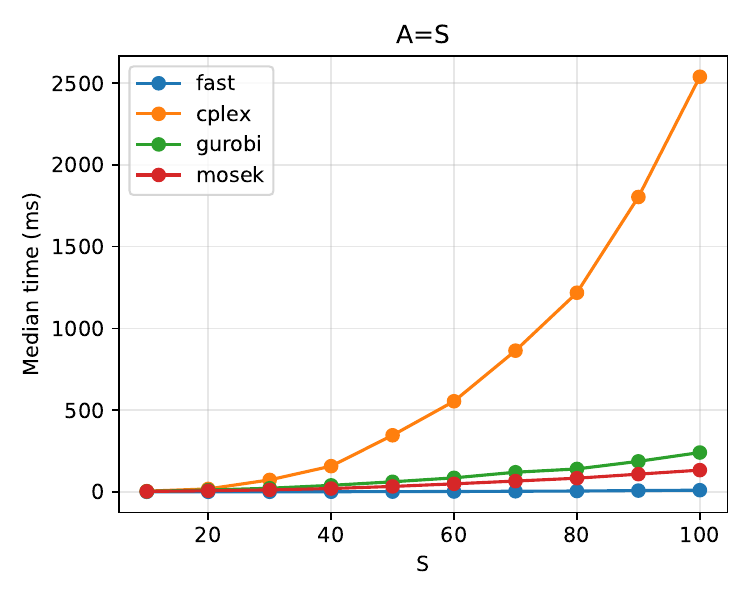}
    \end{tabular}
    \vspace{-0.75cm}
    \caption{Median runtimes (in ms) of the robust Bellman operator under $\ell_2$-norm ambiguity, with $A = 10$ actions (left) and $A = S$ actions (right).
\label{fig:bellman-l2}}
\end{figure}

\begin{figure}[tb]
    \begin{tabular}{cc}
        \includegraphics[width = 0.5 \textwidth]{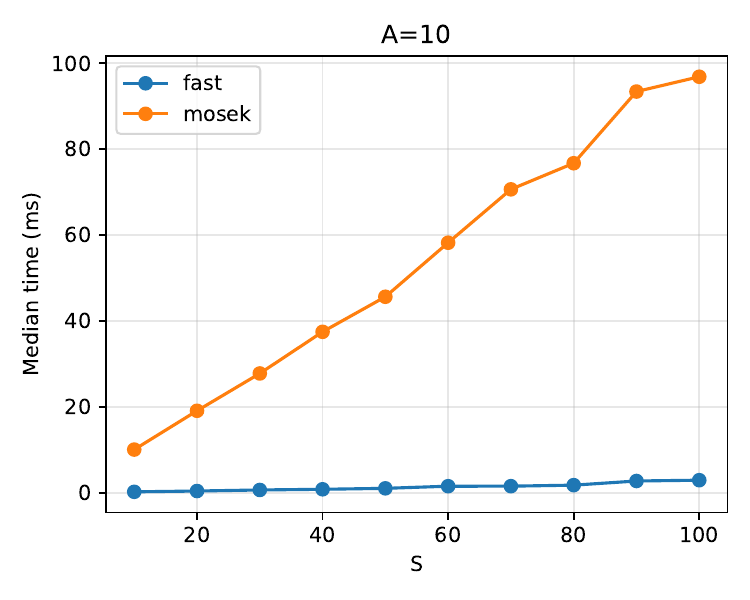} &
        \includegraphics[width = 0.5 \textwidth]{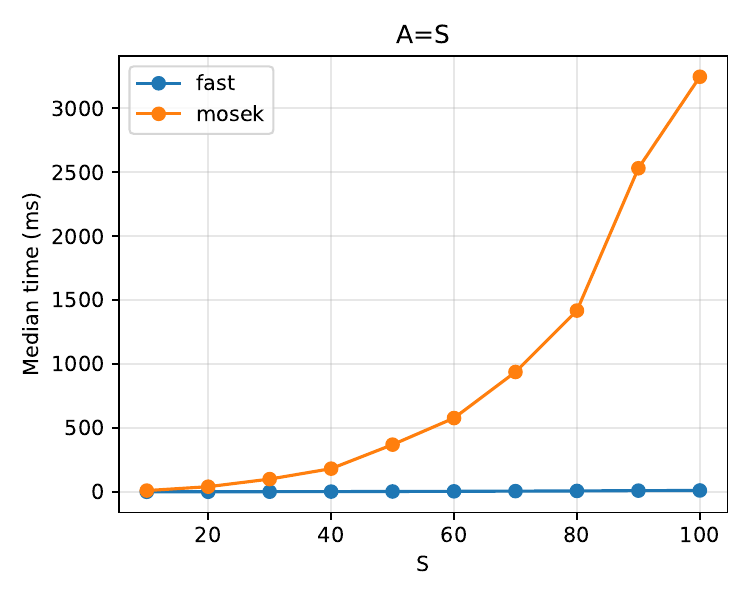}
    \end{tabular}
    \vspace{-0.75cm}
    \caption{Median runtimes (in ms) of the robust Bellman operator under KL divergence ambiguity, with $A = 10$ actions (left) and $A = S$ actions (right).
\label{fig:bellman-kl}}
\end{figure}

\begin{figure}[tb]
    \begin{tabular}{cc}
        \includegraphics[width = 0.5 \textwidth]{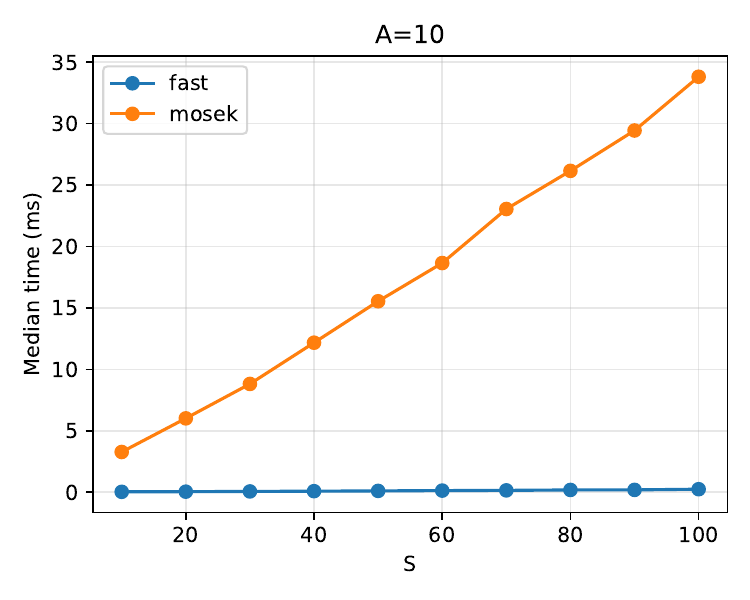} &
        \includegraphics[width = 0.5 \textwidth]{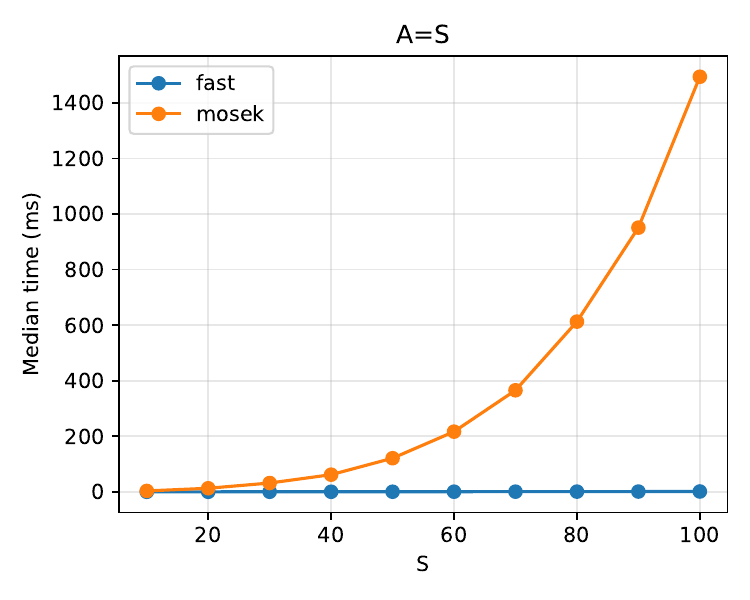}
    \end{tabular}
    \vspace{-0.75cm}
    \caption{Median runtimes (in ms) of the robust Bellman operator under Burg entropy ambiguity, with $A = 10$ actions (left) and $A = S$ actions (right). \label{fig:bellman-burg}}
\end{figure}

\subsection{Robust Value Iteration}\label{sec:numericals-vi}

We finally evaluate the end-to-end runtimes of a na\"ive robust value iteration (VI) that incorporates the robust Bellman operators from the preceding experiments. For each instance, we perform a single run of robust VI until convergence, defined by a termination threshold of $10^{-5}$ in $\ell_\infty$-norm. In addition to the synthetic instances from before, we include standard benchmark MDPs.

Our benchmark suite combines standard reinforcement learning environments and
classical textbook Markov decision processes. We include the \texttt{Blackjack}, \texttt{CliffWalking},
\texttt{FrozenLake} (4$\times$4 and 8$\times$8, with slipperiness), and \texttt{Taxi} domains from the
\texttt{toy\_text} collection in Gymnasium (\url{https://gymnasium.farama.org}).
For these environments, nominal transition probabilities are obtained by
exact enumeration of the underlying dynamics or by direct extraction from
the Gymnasium transition tables, using the default reward specifications
and initial state distributions given by \texttt{env.unwrapped.isd}.
We further include the \texttt{forest50} instance from
\texttt{pymdptoolbox.example.forest}, which implements the classical forest
management benchmark with $S=50$ states.
An additional grid-based instance, \texttt{openspiel\_grid16}, is derived
from the single-agent pathfinding rules in OpenSpiel (\url{https://github.com/google-deepmind/open_spiel}) on an empty $4\times4$ grid with standard OpenSpiel rewards.
Finally, we consider a collection of widely used textbook-style benchmarks,
including \texttt{chain10}, \texttt{riverswim6}, \texttt{riverswim20},
\texttt{gridworld25}, \texttt{capacity50}, \texttt{inventory50},
\texttt{perishable50}, and \texttt{machine20}, whose parameterizations
follow common constructions in the dynamic programming and robust MDP
literature \citep{P94:mdp, strehl2008analysis, }.
All benchmark MDPs use a discount factor of $\lambda = 0.99$.

Tables~\ref{tab:vi-l1-l2} and~\ref{tab:vi-kl-burg} report end-to-end
runtimes for robust value iteration under $\ell_1$- and $\ell_2$-norm
ambiguity, as well as under KL divergence and Burg entropy, respectively,
on both synthetic and benchmark instances. For each instance, we report
the total runtime of a single robust value iteration run until
convergence.

Across all settings, our implementation consistently outperforms the
commercial solvers, often by one to several orders of magnitude. The
largest gains are observed on synthetic instances with large state and
action spaces, as well as on benchmark problems with more complex
transition structures. In the $\ell_1$-norm setting, we also compare
against the homotopy continuation method of \cite{HPW21:ppi},
which is competitive on smaller instances but scales less favorably.
Overall, robust value iteration based on our specialized algorithms is
substantially faster and scales reliably to problem sizes at which both
generic solvers and the homotopy approach become prohibitively slow
and/or encounter numerical difficulties.

\begin{table}[t]
\caption{$\ell_1$- and $\ell_2$-norm robust value iteration runtimes on
synthetic and benchmark instances (in seconds). CPLEX exceeded the time
limit on the largest synthetic instances and encountered numerical
issues on the \texttt{taxi} benchmark. \label{tab:vi-l1-l2}}
\hspace{-2.5cm}
\footnotesize
\begin{tabular}{l|rrrrr|rrrr}
Instance & L1 Fast & L1 CPLEX & L1 Gurobi & L1 Mosek & L1 Homotopy & L2 Fast & L2 CPLEX & L2 Gurobi & L2 Mosek \\
\hline
synthetic ($S=10$, $A=10$) & 0.32 & 8.76 & 9.35 & 35.60 & 0.11 & 0.43 & 31.23 & 15.28 & 21.63 \\
synthetic ($S=20$, $A=10$) & 0.90 & 34.55 & 38.30 & 131.84 & 1.03 & 1.61 & 127.33 & 50.45 & 63.43 \\
synthetic ($S=30$, $A=10$) & 1.80 & 79.19 & 87.57 & 272.19 & 4.20 & 3.85 & 289.49 & 104.24 & 105.28 \\
synthetic ($S=40$, $A=10$) & 3.30 & 145.45 & 144.37 & 466.93 & 11.27 & 8.59 & 481.52 & 183.38 & 164.79 \\
synthetic ($S=50$, $A=10$) & 4.79 & 247.83 & 230.86 & 747.41 & 25.78 & 16.47 & 781.01 & 287.33 & 240.68 \\
synthetic ($S=60$, $A=10$) & 7.23 & 389.06 & 377.18 & 1,049.32 & 52.54 & 30.06 & 1,027.18 & 411.73 & 325.34 \\
synthetic ($S=70$, $A=10$) & 10.48 & 564.72 & 570.39 & 1,416.99 & 91.73 & 49.63 & 1,370.74 & 560.45 & 446.30 \\
synthetic ($S=80$, $A=10$) & 13.57 & 752.75 & 687.27 & 1,831.23 & 164.10 & 75.59 & 1,712.59 & 737.30 & 561.46 \\
synthetic ($S=90$, $A=10$) & 18.96 & 1,038.82 & 825.52 & 2,318.79 & 247.93 & 114.80 & 2,307.91 & 956.41 & 686.20 \\
synthetic ($S=100$, $A=10$) & 23.19 & 1,272.28 & 1,081.04 & 2,934.97 & 379.10 & 161.46 & 2,858.56 & 1,282.02 & 833.61 \\
\hline
synthetic ($S=20$, $A=20$) & 1.51 & 63.95 & 74.18 & 250.28 & 2.14 & 2.53 & 252.47 & 99.92 & 99.42 \\
synthetic ($S=30$, $A=30$) & 3.89 & 225.97 & 218.44 & 780.74 & 12.51 & 7.68 & 1,399.57 & 322.27 & 288.52 \\
synthetic ($S=40$, $A=40$) & 8.31 & 625.65 & 617.02 & 1,944.94 & 45.78 & 18.53 & 3,806.36 & 734.32 & 637.73 \\
synthetic ($S=50$, $A=50$) & 14.78 & 1,197.84 & 1,198.48 & 3,866.64 & 130.71 & 38.87 & 10,428.00 & 1,428.66 & 1,274.13 \\
synthetic ($S=60$, $A=60$) & 24.48 & 2,238.33 & 2,580.78 & 6,219.14 & 329.31 & 77.01 & 23,031.25 & 2,512.82 & 2,106.52 \\
synthetic ($S=70$, $A=70$) & 37.92 & 3,891.67 & 4,978.35 & 10,261.05 & 659.18 & 138.96 & 38,795.90 & 4,040.86 & 3,296.62 \\
synthetic ($S=80$, $A=80$) & 53.06 & 6,404.92 & 8,614.56 & 16,014.75 & 1,247.10 & 234.71 & 68,192.10 & 6,027.53 & 4,890.48 \\
synthetic ($S=90$, $A=90$) & 73.20 & 10,370.05 & 16,797.80 & 23,302.25 & 2,220.68 & 384.67 & --- & 8,658.69 & 6,981.41 \\
synthetic ($S=100$, $A=100$) & 103.71 & 16,941.10 & 32,160.25 & 33,363.20 & 4,240.86 & 589.63 & --- & 12,097.85 & 10,013.65 \\
\hline
blackjack & 77.78 & 1,653.75 & 1,061.72 & 3,014.06 & 30.41 & 249.26 & 4,471.39 & 2,615.19 & 1,570.60 \\
capacity50 & 0.26 & 5.88 & 6.67 & 15.92 & 0.12 & 0.57 & --- & 9.36 & 7.72 \\
chain10 & 0.11 & 3.41 & 3.94 & 9.61 & 0.01 & 0.13 & 7.68 & 3.74 & 10.36 \\
cliffwalking & 0.79 & 46.95 & 79.89 & 191.04 & 0.13 & 1.06 & 230.63 & 85.74 & 95.27 \\
forest50 & 1.24 & 35.77 & 32.69 & 111.07 & 0.11 & 2.23 & 118.71 & 63.13 & 66.53 \\
frozenlake4x4 & 0.02 & 0.77 & 0.59 & 2.62 & 0.00 & 0.02 & 1.03 & 0.51 & 0.82 \\
frozenlake8x8 & 0.30 & 11.16 & 9.58 & 35.00 & 0.08 & 0.19 & 12.03 & 6.58 & 5.64 \\
gridworld25 & 0.02 & 0.34 & 0.35 & 1.29 & 0.00 & 0.02 & 1.68 & 0.90 & 1.06 \\
inventory50 & 5.10 & 107.97 & 108.56 & 320.69 & 2.56 & 10.59 & 421.55 & 162.61 & 162.18 \\
machine20 & 0.17 & 7.99 & 6.20 & 23.06 & 0.02 & 0.20 & 24.86 & 10.60 & 18.21 \\
openspiel$\underline{~}$grid16 & 0.02 & 0.39 & 0.34 & 1.20 & 0.00 & 0.03 & 5.23 & 4.77 & 1.00 \\
perishable50 & 3.29 & 116.33 & 106.19 & 357.29 & 2.55 & 4.38 & 365.81 & 174.50 & 160.55 \\
riverswim20 & 0.09 & 2.68 & 2.35 & 9.62 & 0.02 & 0.12 & 7.58 & 2.98 & 5.34 \\
riverswim6 & 0.02 & 0.74 & 0.51 & 2.22 & 0.00 & 0.03 & 1.55 & 0.74 & 1.90 \\
taxi & 560.31 & 7,869.75 & 10,059.20 & 33,971.10 & 31.23 & 626.01 & --- & 26,030.40 & 16,071.50 \\ \hline \hline
\end{tabular}
\end{table}

\begin{table}[t]
\centering
\caption{KL divergence and Burg entropy robust value iteration runtimes
on synthetic and benchmark instances (in seconds). Missing entries
correspond to instances where MOSEK encountered numerical issues. \label{tab:vi-kl-burg}}
\begin{tabular}{l|rr|rr}
Instance & KL Fast & KL Mosek & Burg Fast & Burg Mosek \\
\hline
synthetic ($S=10$, $A=10$) & 1.28 & 68.01 & 0.67 & 48.31 \\
synthetic ($S=20$, $A=10$) & 5.29 & 309.12 & 2.33 & 185.36 \\
synthetic ($S=30$, $A=10$) & 12.57 & 747.70 & 5.01 & 420.42 \\
synthetic ($S=40$, $A=10$) & 24.06 & 1,406.86 & 8.76 & 738.22 \\
synthetic ($S=50$, $A=10$) & 34.26 & 2,350.78 & 14.28 & 1,158.16 \\
synthetic ($S=60$, $A=10$) & 56.11 & 3,324.76 & 21.76 & 1,674.24 \\
synthetic ($S=70$, $A=10$) & 81.33 & 5,525.52 & 27.26 & 2,204.15 \\
synthetic ($S=80$, $A=10$) & 99.80 & 7,088.89 & 34.64 & 2,831.80 \\
synthetic ($S=90$, $A=10$) & 135.35 & 9,177.22 & 47.38 & 3,593.31 \\
synthetic ($S=100$, $A=10$) & 159.59 & 11,913.70 & 56.27 & --- \\
\hline
synthetic ($S=20$, $A=20$) & 6.97 & 778.27 & 3.82 & 388.29 \\
synthetic ($S=30$, $A=30$) & 20.41 & --- & 11.60 & --- \\
synthetic ($S=40$, $A=40$) & 48.33 & --- & 27.32 & --- \\
synthetic ($S=50$, $A=50$) & 76.27 & --- & 54.15 & --- \\
synthetic ($S=60$, $A=60$) & 132.45 & --- & 87.27 & --- \\
synthetic ($S=70$, $A=70$) & 211.10 & --- & 125.27 & --- \\
synthetic ($S=80$, $A=80$) & 274.85 & --- & 186.08 & --- \\
synthetic ($S=90$, $A=90$) & 394.54 & --- & 260.07 & --- \\
synthetic ($S=100$, $A=100$) & 546.70 & --- & 369.63 & --- \\
\hline
blackjack & 672.13 & 9,801.14 & 132.99 & --- \\
capacity50 & 76.75 & 1,151.15 & 18.96 & 848.49 \\
chain10 & 1.05 & 14.69 & 0.31 & 14.72 \\
cliffwalking & 47.87 & 637.52 & 12.33 & 466.57 \\
forest50 & 18.39 & 293.37 & 5.43 & 265.44 \\
frozenlake4x4 & 1.98 & 29.22 & 0.54 & 25.58 \\
frozenlake8x8 & 24.80 & 374.63 & 5.68 & 289.12 \\
gridworld25 & 6.33 & 127.55 & 1.74 & 84.91 \\
inventory50 & 36.67 & 1,020.96 & 11.35 & 700.36 \\
machine20 & 3.12 & 46.26 & 0.87 & 44.78 \\
openspiel$\underline{~}$grid16 & 7.57 & 143.52 & 2.37 & 116.93 \\
perishable50 & 26.96 & 938.97 & 9.55 & 634.93 \\
riverswim20 & 2.04 & 25.25 & 0.49 & 23.74 \\
riverswim6 & 0.32 & 4.12 & 0.09 & 4.77 \\
taxi & 4,476.32 & --- & 1,297.72 & --- \\ \hline \hline
\end{tabular}
\end{table}

\paragraph{Use of Large Language Models.}
We used large language models (LLMs) as a support tool in the preparation of this manuscript. In particular, LLMs were employed to double-check our hand-derived proofs and to assist with the implementation of the algorithms. The code was subsequently checked manually. Any remaining errors or omissions are solely the responsibility of the authors.

\clearpage

\linespread{1}
\small
    
\bibliographystyle{plainnat}
\bibliography{bibliography}

\linespread{1.5}
\normalsize

\newpage
	
\section*{Appendix A: Proofs}

\noindent \textbf{Proof of Theorem~\ref{thm:overall_complexity:exact_subproblem}.} $\;$
	We compute an $\epsilon$-optimal solution $\bm{v}'$ to the robust Bellman iteration $\mathfrak{B} (\bm{v})$	component-wise. To this end, consider any component $v'_s$, $s \in \mathcal{S}$. Since $\Delta_A$ and $\mathcal P_s$ are nonempty, convex and compact, and the objective is bilinear in $(\bm\pi_s,\bm p_s)$, Sion's minimax theorem applies. We can thus equivalently reformulate the right-hand side of~\eqref{eq:rob_value_it} as the optimal value of the optimization problem
	\begin{equation}\label{eq:rob_value_it:reformulated}
	\begin{array}{l@{\quad}l}
	\text{minimize} & \displaystyle \max_{a \in \mathcal{A}} \; \left\{ \bm{p}_{sa}{}^\top (\bm{r}_{sa} + \lambda \bm{v}) \right\} \\
	\text{subject to} & \displaystyle \sum_{a \in \mathcal{A}} d_a (\bm{p}_{sa}, \overline{\bm{p}}_{sa}) \leq \kappa \\
	& \displaystyle \bm{p}_s \in (\Delta_S)^A.
	\end{array}
	\tag{\ref{eq:rob_value_it}'}
	\end{equation}
	In this reformulation, we have replaced the inner maximization over $\bm{\pi}_s \in \Delta_A$ with the maximization over the extreme points of $\Delta_A$, which is allowed since the objective function is linear in $\bm{\pi}_s$.
	
	We obtain $v'_s$ via bisection on the value of $[\mathfrak{B} (\bm{v})]_s$. We start the bisection with the lower and upper bounds $\underline{v}_{s0} = 0$ and $\overline{v}_{s0} = \overline{R}$, respectively, and halve the length of the interval $[\underline{v}_{si}, \overline{v}_{si}]$ in each iteration $i = 0, 1, \ldots$ by verifying whether or not $[\mathfrak{B} (\bm{v})]_s \leq (\underline{v}_{si} + \overline{v}_{si})/2$ and updating either the upper or the lower interval bound accordingly. The length of the interval no longer exceeds the accuracy $\epsilon$ once the iteration number $i$ satisfies
	\begin{equation*}
	2^{-i} \cdot (\overline{v}_{s0} - \underline{v}_{s0}) \leq \epsilon
	\quad \Longleftrightarrow \quad
	i \geq \log_2 \overline{R} - \log_2 \epsilon,
	\end{equation*}
	that is, after $\mathcal{O} (\log [\overline{R} / \epsilon])$ iterations.
	
	To verify whether $[\mathfrak{B} (\bm{v})]_s \leq \theta$ for some $\theta \in \mathbb{R}$, we check whether the optimal value of problem~\eqref{eq:rob_value_it:reformulated} does not exceed $\theta$. This is the case if and only if
	\begin{align*}
	& \exists \bm{p}_s \in (\Delta_S)^A \; : \;
	\sum_{a \in \mathcal{A}} d_a (\bm{p}_{sa}, \overline{\bm{p}}_{sa}) \leq \kappa
	\text{ and }
	\bm{p}_{sa}{}^\top (\bm{r}_{sa} + \lambda \bm{v}) \leq \theta \;\; \forall a \in \mathcal{A} \\
	\Longleftrightarrow \quad
	& \sum_{a \in \mathcal{A}} \min \left\{ d_a (\bm{p}_{sa}, \overline{\bm{p}}_{sa}) \, : \, \bm{p}_{sa} \in \Delta_S, \;\; \bm{p}_{sa}{}^\top (\bm{r}_{sa} + \lambda \bm{v}) \leq \theta \right\} \leq \kappa \\
	\Longleftrightarrow \quad
	& \sum_{a \in \mathcal{A}} \mathfrak{P} (\overline{\bm{p}}_{sa}; \bm{r}_{sa} + \lambda \bm{v}, \theta) \leq \kappa,
	\end{align*}
	which in turn can be verified by solving $A$ generalized $d_a$-projection problems.

	In conclusion, for each of the $S$ components of $\bm{v}'$, we need to execute $\mathcal{O} (\log [\overline{R} / \epsilon])$ bisection iterations of complexity $\mathcal{O} (A \cdot h (S))$ each, which proves the statement.
\qed

To analyze the quantitative stability of the robust Bellman operator $\mathfrak{B} (\bm{v})$, we study the dual of problem~\eqref{eq:rob_value_it:reformulated}, which by the proof of Theorem~\ref{thm:overall_complexity:exact_subproblem} is equivalent to the robust Bellman operator~\eqref{eq:rob_value_it}:
	\begin{equation}\label{eq:rob_value_it:dual}
	\begin{array}{l@{\quad}l}
	\text{maximize} & \displaystyle  -\kappa \omega + \mathbf{e}^\top \bm{\gamma} - \omega \sum_{a \in \mathcal{A}} d_a^{\star} \left( \frac{1}{\omega} \left[\bm{\theta}_a + \gamma_a \mathbf{e} - \alpha_a (\bm{r}_{sa} + \lambda \bm{v}) \right], \overline{\bm{p}}_{sa} \right)  \\
	\text{subject to} & \displaystyle \bm{\alpha} \in \Delta_A, \;\; \omega \in \mathbb{R}_+, \;\; \bm{\gamma} \in \mathbb{R}^A, \;\; \bm{\theta} \in \mathbb{R}_+^{AS}
	\end{array}
	\end{equation}
	Here, $d_a^\star (\bm{x}, \overline{\bm{p}}_{sa}) := \sup \, \{ \bm{p}_{sa}{}^\top \bm{x} - d_a  (\bm{p}_{sa}, \overline{\bm{p}}_{sa}) \, : \, \bm{p}_{sa} \in \mathbb{R}^S \}$ denotes the conjugate of the deviation function $d_a (\cdot, \overline{\bm{p}}_{sa})$ in the definition~\eqref{eq:additive_s_rect} of the ambiguity set $\mathcal{P}_s$, and the perspective function in~\eqref{eq:rob_value_it:dual} extends to $\omega = 0$ in the usual way \cite[Corollary~8.5.2]{R97:convex_analysis}. Note also that strong duality holds between~\eqref{eq:rob_value_it:reformulated} and~\eqref{eq:rob_value_it:dual} since problem~\eqref{eq:rob_value_it:reformulated} affords a Slater point by assumption (\textbf{K}).
	
	\begin{lem}\label{lem:bound_it_like_beckham}
	For any primal-dual pair $\bm{p}^\star_s \in \mathbb{R}^{AS}$ and $(\bm{\alpha}^\star, \omega^\star, \bm{\gamma}^\star, \bm{\theta}^\star) \in \mathbb{R}^A \times \mathbb{R} \times \mathbb{R}^A \times \mathbb{R}^{AS}$ satisfying the Karush-Kuhn-Tucker conditions for the problems~\eqref{eq:rob_value_it:reformulated} and~\eqref{eq:rob_value_it:dual}, we have that
	\begin{equation*}
	\omega^\star 
    \;\leq \; 
    \frac{\displaystyle \max_{a \in \mathcal{A}} \, \Vert \bm{r}_{sa} + \lambda \bm{v} \Vert_{\infty}}{\displaystyle \sum_{a \in \mathcal{A}} d_a (\bm{p}_{sa}^\star, \overline{\bm{p}}_{sa})},
	\end{equation*}
	where the right-hand side is interpreted as $+ \infty$ whenever the denominator is zero.
	\end{lem}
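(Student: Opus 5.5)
Our plan is to view $\omega^\star$ as the Lagrange multiplier of the budget constraint $\sum_a d_a(\bm{p}_{sa},\overline{\bm{p}}_{sa}) \le \kappa$ in~\eqref{eq:rob_value_it:reformulated}. We then compare the primal objective at $\bm{p}^\star_s$ with its value at the nominal point $\overline{\bm{p}}_s$, using the saddle-point (Lagrangian) characterization implied by the KKT conditions. Concretely, we write~\eqref{eq:rob_value_it:reformulated} in epigraph form, minimizing $\tau$ subject to $\bm{p}_{sa}{}^\top(\bm{r}_{sa}+\lambda\bm{v}) \le \tau$ for all $a$ (multipliers $\alpha_a$, which forces $\bm{\alpha}\in\Delta_A$ through stationarity in $\tau$), the budget constraint (multiplier $\omega$), and the simplex constraints (multipliers $\gamma_a$, $\bm{\theta}_a$). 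We check that this Lagrangian yields exactly the dual~\eqref{eq:rob_value_it:dual} via the conjugates $d_a^\star$. The KKT conditions then say that $\bm{p}^\star_s$ minimizes the Lagrangian $L(\bm{p}_s,\tau;\bm{\alpha}^\star,\omega^\star,\bm{\gamma}^\star,\bm{\theta}^\star)$ over all $(\bm{p}_s,\tau)$, and that complementary slackness holds.

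The key step is to evaluate this Lagrangian at the feasible comparison point $\bm{p}_s=\overline{\bm{p}}_s$. After eliminating $\tau$ through $\mathbf{e}^\top\bm{\alpha}^\star=1$, and using $\overline{\bm{p}}_{sa}\in\Delta_S$ so that the $\gamma$- and $\theta$-terms are $\ge 0$ resp.\ vanish in the right direction, we obtain
\begin{equation*}
\sum_a \alpha^\star_a \bm{p}^\star_{sa}{}^\top(\bm{r}_{sa}+\lambda\bm{v}) + \omega^\star\Big(\sum_a d_a(\bm{p}^\star_{sa},\overline{\bm{p}}_{sa})-\kappa\Big)
\;\le\; \sum_a \alpha^\star_a \overline{\bm{p}}_{sa}{}^\top(\bm{r}_{sa}+\lambda\bm{v}) + \omega^\star\big(0-\kappa\big).
\end{equation*}
Here the left-hand side uses complementary slackness for the simplex multipliers at $\bm{p}^\star_s$, and the right-hand side uses assumption (\textbf{D}), namely $d_a(\overline{\bm{p}}_{sa},\overline{\bm{p}}_{sa})=0$. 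The $-\omega^\star\kappa$ terms cancel, leaving
\begin{equation*}
\omega^\star\sum_a d_a(\bm{p}^\star_{sa},\overline{\bm{p}}_{sa}) \;\le\; \sum_a\alpha^\star_a(\overline{\bm{p}}_{sa}-\bm{p}^\star_{sa})^\top(\bm{r}_{sa}+\lambda\bm{v}).
\end{equation*}

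To finish, we bound the right-hand side. Both $\overline{\bm{p}}_{sa}$ and $\bm{p}^\star_{sa}$ lie in $\Delta_S$, and $\bm{r}_{sa}+\lambda\bm{v}\ge\bm{0}$ because rewards are non-negative and we work with $\bm{v}\ge\bm{0}$. Hence each difference $(\overline{\bm{p}}_{sa}-\bm{p}^\star_{sa})^\top(\bm{r}_{sa}+\lambda\bm{v})$ is at most $\overline{\bm{p}}_{sa}{}^\top(\bm{r}_{sa}+\lambda\bm{v}) \le \|\bm{r}_{sa}+\lambda\bm{v}\|_\infty$. Since $\bm{\alpha}^\star\in\Delta_A$, the whole sum is at most $\max_a\|\bm{r}_{sa}+\lambda\bm{v}\|_\infty$. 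Dividing by the denominator when it is positive gives the claim; when it is zero, the bound is $+\infty$ by convention.

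We expect the main obstacle to be bookkeeping rather than any deep argument. We must verify that the sign conventions of~\eqref{eq:rob_value_it:dual}, in particular the $\bm{\theta}_a+\gamma_a\mathbf{e}-\alpha_a(\bm{r}_{sa}+\lambda\bm{v})$ argument and the perspective at $\omega=0$, correspond to the Lagrangian above. This is what justifies the saddle-point inequality at $\overline{\bm{p}}_s$. Should the sign of $\gamma$ come out differently, the $\gamma$-terms still vanish at both points because both are probability vectors, and the $\theta$-term at $\overline{\bm{p}}_s$ enters with the sign that only strengthens the inequality.
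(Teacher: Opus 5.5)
Your proof is correct and is essentially the paper's argument. The paper compares $\bm{p}^\star_{sa}$ with the nominal point $\overline{\bm{p}}_{sa}$ through the gradient inequality of the convex $d_a$ combined with the component-wise stationarity conditions (evaluated at the index $s'$ minimizing the gradient). You instead fold convexity and stationarity into the single fact that $\bm{p}^\star_s$ minimizes the Lagrangian, and evaluate it at $\overline{\bm{p}}_s$, using the same ingredients: (\textbf{D}), complementary slackness, $\mathbf{e}^\top \bm{\alpha}^\star = 1$ and $\bm{r}_{sa} + \lambda \bm{v} \geq \bm{0}$. This packaging has the minor advantage of not needing differentiability of $d_a$, since it works verbatim with subgradient KKT conditions, for example for the $1$-norm.
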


	\begin{proof}
	Using the notational shorthand $\bm{b}_{sa} = \bm{r}_{sa}+\lambda \bm{v}$,	the KKT conditions for~\eqref{eq:rob_value_it:reformulated} and~\eqref{eq:rob_value_it:dual} are:
	\begin{equation*}
	\begin{array}{l@{\qquad}r}
	\displaystyle \alpha_a\bm{b}_{sa}-\gamma_a\mathbf{e}-\bm{\theta}_a +\omega \nabla_{\bm{p}_{sa}}d_a(\bm{p}_{sa},\overline{\bm{p}}_{sa}) = \mathbf{0} \;\; \forall a\in\mathcal{A} & \text{(Stationarity)} \\
	\displaystyle \mathbf{e}^\top \bm{\alpha} = 1 & \text{(Stationarity)} \\
	\displaystyle \sum_{a \in \mathcal{A}} d_a (\bm{p}_{sa}, \overline{\bm{p}}_{sa}) \leq \kappa,  \;\; \bm{p}_{sa}{}^\top\bm{b}_{sa}\leq B \;\; \forall a\in\mathcal{A}, \;\; \bm{p}_s \in (\Delta_S)^A, \;\; B\in\mathbb{R} & \text{(Primal Feasibility)} \\
	\displaystyle \bm{\alpha} \in \mathbb{R}^A_+, \;\; \omega \in \mathbb{R}_+, \;\; \bm{\gamma} \in \mathbb{R}^A, \;\; \bm{\theta} \in \mathbb{R}_+^{AS} & \text{(Dual Feasibility)} \\
	\alpha_a(\bm{p}_{sa}{}^\top\bm{b}_{sa} - B) = 0 \;\; \forall a\in\mathcal{A} & \text{(Complementary Slackness)} \\
	\displaystyle\omega\left( \sum_{a \in \mathcal{A}} d_a (\bm{p}_{sa}, \overline{\bm{p}}_{sa}) - \kappa \right) = 0 & \text{(Complementary Slackness)} \\
	\theta_{as'}p_{sas'} = 0 \;\; \forall a\in\mathcal{A}, \, s'\in\mathcal{S} & \text{(Complementary Slackness)}
	\end{array}
	\end{equation*}
	Here, $B \in \mathbb{R}$ denotes the epigraphical variable used to linearize the objective function in~\eqref{eq:rob_value_it:reformulated}. The proof is split into two parts. We first show that for every $a \in \mathcal{A}$ there is $s' \in \mathcal{S}$ such that
	\begin{equation}\label{eq:inexact_subproblem:bound1}
	d_a (\bm{p}_{sa}, \overline{\bm{p}}_{sa})
	\; \leq \;
	\bm{p}_{sa}{}^\top \nabla_{\bm{p}_{sa}} d_a(\bm{p}_{sa},\overline{\bm{p}}_{sa}) - [\nabla_{\bm{p}_{sa}} d_a(\bm{p}_{sa},\overline{\bm{p}}_{sa})]_{s'}.
	\end{equation}
	We next prove that for all $s' \in \mathcal{S}$ and $a \in \mathcal{A}$, we have
	\begin{equation}\label{eq:inexact_subproblem:bound2}
	\omega \left( \bm{p}_{sa}{}^\top \nabla_{\bm{p}_{sa}} d_a(\bm{p}_{sa},\overline{\bm{p}}_{sa}) - [\nabla_{\bm{p}_{sa}} d_a(\bm{p}_{sa},\overline{\bm{p}}_{sa})]_{s'} \right)
	\; \leq \;
	\alpha_a b_{sas'}.
	\end{equation}
	Since $\omega \in \mathbb{R}_+$ by the dual feasibility condition,~\eqref{eq:inexact_subproblem:bound1} and~\eqref{eq:inexact_subproblem:bound2} imply that for every $a \in \mathcal{A}$ there is $s' \in \mathcal{S}$ such that $\omega d_a (\bm{p}_{sa}, \overline{\bm{p}}_{sa}) \leq \alpha_a b_{sas'}$. From this we obtain that
	\begin{equation*}
	\omega \sum_{a \in \mathcal{A}} d_a (\bm{p}_{sa}, \overline{\bm{p}}_{sa})
	\; \leq \;
	\sum_{a \in \mathcal{A}} \alpha_a \max_{s' \in \mathcal{S}} \left\{ b_{sas'} \right\}
	\; \leq \;
	\max_{a \in \mathcal{A}, s' \in \mathcal{S}} \left\{ b_{sas'} \right\},
	\end{equation*}
	where the last inequality holds since $\mathbf{e}^\top \bm{\alpha} = 1$ by the second stationarity condition. This proves the statement of the lemma.
	
	To show~\eqref{eq:inexact_subproblem:bound1}, we note that
	\begin{equation*}
	d_a (\bm{p}_{sa}, \overline{\bm{p}}_{sa}) + \nabla_{\bm{p}_{sa}} d_a(\bm{p}_{sa},\overline{\bm{p}}_{sa})^\top (\overline{\bm{p}}_{sa} - \bm{p}_{sa})
	\; \leq \;
	d_a (\overline{\bm{p}}_{sa}, \overline{\bm{p}}_{sa})
	\; = \;
	0
	\end{equation*}
	since $d_a$ is convex by assumption (\textbf{C}) and $d_a (\overline{\bm{p}}_{sa}, \overline{\bm{p}}_{sa}) = 0$ by assumption (\textbf{D}). We thus have
	\begin{equation*}
	d_a (\bm{p}_{sa}, \overline{\bm{p}}_{sa})
	\; \leq \;
	\bm{p}_{sa}{}^\top \nabla_{\bm{p}_{sa}} d_a(\bm{p}_{sa},\overline{\bm{p}}_{sa}) - \overline{\bm{p}}_{sa}{}^\top \nabla_{\bm{p}_{sa}} d_a(\bm{p}_{sa},\overline{\bm{p}}_{sa}),
	\end{equation*}
	and the fact that $\overline{\bm{p}}_{sa} \in \Delta_S$ implies that
	\begin{equation*}
	d_a (\bm{p}_{sa}, \overline{\bm{p}}_{sa})
	\; \leq \;
	\bm{p}_{sa}{}^\top \nabla_{\bm{p}_{sa}} d_a(\bm{p}_{sa},\overline{\bm{p}}_{sa}) - \min_{s' \in \mathcal{S}} \; \left[ \nabla_{\bm{p}_{sa}} d_a(\bm{p}_{sa},\overline{\bm{p}}_{sa}) \right]_{s'},
	\end{equation*}
	which is equivalent to~\eqref{eq:inexact_subproblem:bound1}. 
	
	We now prove~\eqref{eq:inexact_subproblem:bound2}. Aggregating the equations in the first stationarity condition according to the weights $\bm{p}_{sa} \in \Delta_S$ shows that for all $a \in \mathcal{A}$, we have
	\begin{align}
	& \alpha_a \bm{p}_{sa}{}^\top \bm{b}_{sa} - \gamma_a \mathbf{e}^\top \bm{p}_{sa} - \bm{p}_{sa}{}^\top \bm{\theta}_a + \omega \bm{p}_{sa}{}^\top \nabla_{\bm{p}_{sa}}d_a(\bm{p}_{sa},\overline{\bm{p}}_{sa}) = 0 \nonumber \\
	\Longleftrightarrow \quad
	& \gamma_a  = \alpha_a \bm{p}_{sa}{}^\top \bm{b}_{sa} + \omega \bm{p}_{sa}{}^\top \nabla_{\bm{p}_{sa}}d_a(\bm{p}_{sa},\overline{\bm{p}}_{sa}) \label{eq:inexact_subproblem:bound2:aux1}
	\end{align}
	since the primal feasibility condition guarantees that $\mathbf{e}^\top \bm{p}_{sa} = 1$ and the last complementary slackness condition ensures that $\bm{p}_{sa}{}^\top \bm{\theta}_a = 0$. However, the first stationarity condition also implies
	\begin{align}
	& \alpha_a b_{sas'} - \gamma_a - \theta_{as'} + \omega \left[ \nabla_{\bm{p}_{sa}} d_a(\bm{p}_{sa},\overline{\bm{p}}_{sa}) \right]_{s'} = 0 \quad \forall a \in \mathcal{A}, \, s' \in \mathcal{S} \nonumber \\
	\Longleftrightarrow \quad
	& \gamma_a \leq \alpha_a b_{sas'} + \omega \left[ \nabla_{\bm{p}_{sa}} d_a(\bm{p}_{sa},\overline{\bm{p}}_{sa}) \right]_{s'} \mspace{95mu} \forall a \in \mathcal{A}, \, s' \in \mathcal{S} \label{eq:inexact_subproblem:bound2:aux2}
	\end{align}
	since $\theta_{as'} \geq 0$ due to the dual feasibility condition. Combining~\eqref{eq:inexact_subproblem:bound2:aux1} and~\eqref{eq:inexact_subproblem:bound2:aux2}, finally, yields
	\begin{align*}
	& \alpha_a \bm{p}_{sa}{}^\top \bm{b}_{sa} + \omega \bm{p}_{sa}{}^\top \nabla_{\bm{p}_{sa}}d_a(\bm{p}_{sa},\overline{\bm{p}}_{sa})
	\; \leq \;
	\alpha_a b_{sas'} + \omega \left[ \nabla_{\bm{p}_{sa}} d_a(\bm{p}_{sa},\overline{\bm{p}}_{sa}) \right]_{s'} \mspace{25mu} \forall a \in \mathcal{A}, s' \in \mathcal{S} \\
	\Longleftrightarrow \quad
	& \omega \left( \bm{p}_{sa}{}^\top \nabla_{\bm{p}_{sa}}d_a(\bm{p}_{sa},\overline{\bm{p}}_{sa}) - \left[ \nabla_{\bm{p}_{sa}} d_a(\bm{p}_{sa},\overline{\bm{p}}_{sa}) \right]_{s'} \right)
	\; \leq \; \alpha_a b_{sas'} - \alpha_a \bm{p}_{sa}{}^\top \bm{b}_{sa} \quad \forall a \in \mathcal{A}, s' \in \mathcal{S},
	\end{align*}
	which implies~\eqref{eq:inexact_subproblem:bound2} since $\alpha_a \bm{p}_{sa}{}^\top \bm{b}_{sa} \geq 0$ as $\alpha_a \geq 0$ by the dual feasibility condition, $\bm{p}_{sa} \geq \bm{0}$ by the primal feasibility condition and $\bm{b}_{sa} \geq \bm{0}$ by assumption.
	\end{proof}

	\begin{lem}\label{lem:the_grand_unification_of_everything}
	Let $\mathfrak{B} (\bm{v}; \kappa)$ be the robust Bellman iterate~\eqref{eq:rob_value_it} with the budget $\kappa > 0$ in the ambiguity set $\mathcal{P}$. For any $\kappa' \geq \kappa$ and any primal-dual pair $\bm{p}^\star_s \in \mathbb{R}^{AS}$ and $(\bm{\alpha}^\star, \omega^\star, \bm{\gamma}^\star, \bm{\theta}^\star) \in \mathbb{R}^A \times \mathbb{R} \times \mathbb{R}^A \times \mathbb{R}^{AS}$ satisfying the Karush-Kuhn-Tucker conditions for~\eqref{eq:rob_value_it:reformulated} and~\eqref{eq:rob_value_it:dual} with budget $\kappa$, we have 
	\begin{equation*}
	\left \lVert \mathfrak{B} (\bm{v}; \kappa) - \mathfrak{B} (\bm{v}; \kappa') \right \rVert_\infty \leq \frac{\displaystyle (\kappa' - \kappa) \max_{a \in \mathcal{A}} \, \Vert \bm{r}_{sa} + \lambda \bm{v} \Vert_{\infty}}{\displaystyle  \sum_{a \in \mathcal{A}} d_a (\bm{p}_{sa}^\star, \overline{\bm{p}}_{sa})},
	\end{equation*}
	where the right-hand side is interpreted as $+ \infty$ whenever the denominator is zero.
	\end{lem}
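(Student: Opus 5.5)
The idea is a sensitivity argument on the optimal value of~\eqref{eq:rob_value_it:dual} as a function of the budget, applied to one state $s$ at a time. By the proof of Theorem~\ref{thm:overall_complexity:exact_subproblem}, $[\mathfrak{B}(\bm{v};\kappa)]_s$ equals the optimal value of~\eqref{eq:rob_value_it:reformulated}. Strong duality holds by assumption~(\textbf{K}), so this is also the optimal value of~\eqref{eq:rob_value_it:dual}. Write $g(\bm{\alpha},\omega,\bm{\gamma},\bm{\theta}) - \kappa\omega$ for the dual objective; the dual feasible set does not depend on $\kappa$.

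Enlarging the budget enlarges the feasible set of~\eqref{eq:rob_value_it:reformulated}, which is a minimization. Hence $[\mathfrak{B}(\bm{v};\kappa')]_s \le [\mathfrak{B}(\bm{v};\kappa)]_s$. For the reverse direction, use weak duality for the problem with budget $\kappa'$. Evaluate its dual at the point $(\bm{\alpha}^\star,\omega^\star,\bm{\gamma}^\star,\bm{\theta}^\star)$, which is optimal for budget $\kappa$:
\begin{equation*}
[\mathfrak{B}(\bm{v};\kappa')]_s \;\ge\; g(\bm{\alpha}^\star,\omega^\star,\bm{\gamma}^\star,\bm{\theta}^\star) - \kappa'\omega^\star \;=\; [\mathfrak{B}(\bm{v};\kappa)]_s - (\kappa'-\kappa)\,\omega^\star .
\end{equation*}
So $0 \le [\mathfrak{B}(\bm{v};\kappa)]_s - [\mathfrak{B}(\bm{v};\kappa')]_s \le (\kappa'-\kappa)\,\omega^\star$.

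Lemma~\ref{lem:bound_it_like_beckham} gives $\omega^\star \le \max_a\lVert\bm{r}_{sa}+\lambda\bm{v}\rVert_\infty / \sum_a d_a(\bm{p}^\star_{sa},\overline{\bm{p}}_{sa})$. Substituting this yields the componentwise bound, and the $\infty$-norm statement follows state by state, with the quantities $\bm{p}^\star_s$ and $\omega^\star$ indexed by the state in question. If the denominator is zero, the right-hand side is $+\infty$ by convention and there is nothing to prove.

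The main point needing care is that the KKT pair yields a genuine dual optimum with zero duality gap, so that the dual objective value at $(\bm{\alpha}^\star,\omega^\star,\bm{\gamma}^\star,\bm{\theta}^\star)$ with budget $\kappa$ is exactly $[\mathfrak{B}(\bm{v};\kappa)]_s$. This follows from convexity: assumption~(\textbf{C}) makes~\eqref{eq:rob_value_it:reformulated} convex, so the KKT conditions are sufficient for optimality. The perspective term extends correctly to $\omega^\star=0$ by the convention cited before the lemma. One further check is that the dual objective depends on $\kappa$ only through the linear term $-\kappa\omega$, which holds by inspection of~\eqref{eq:rob_value_it:dual}.
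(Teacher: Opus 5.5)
Your proposal is correct and follows essentially the same route as the paper's proof. Monotonicity in the budget fixes the sign of the difference. The $\kappa$-optimal dual point stays feasible for budget $\kappa'$, with its objective lowered by exactly $(\kappa'-\kappa)\,\omega^\star$, and Lemma~\ref{lem:bound_it_like_beckham} then bounds $\omega^\star$. You also note explicitly that the KKT conditions, together with convexity, certify a zero duality gap; this slightly sharpens a step the paper leaves implicit.
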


    Intuitively, Lemma~\ref{lem:the_grand_unification_of_everything} states that for a fixed budget $\kappa$, the worst-case expected total reward lost by loosening the budget to $\kappa'$ can be bounded from above by a function that is linear in the additional budget $\kappa' - \kappa$.

	\begin{proof}[Proof of Lemma~\ref{lem:the_grand_unification_of_everything}]
	Since $\kappa' \geq \kappa$, we have for fixed $s \in \mathcal{S}$ that
	\begin{equation*}
	\left \lvert [ \mathfrak{B} (\bm{v}; \kappa) ]_s - [ \mathfrak{B} (\bm{v}; \kappa') ]_s \right \rvert
	\; = \;
	[ \mathfrak{B} (\bm{v}; \kappa) ]_s - [ \mathfrak{B} (\bm{v}; \kappa') ]_s
	\; \leq \;
	\omega^\star (\kappa' - \kappa),
	\end{equation*}
	where $\omega^\star$ belongs to any primal-dual pair $\bm{p}_s^\star \in \mathbb{R}^{AS}$ and $(\bm{\alpha}^\star, \omega^\star, \bm{\gamma}, \bm{\theta}^\star) \in \mathbb{R}^A \times \mathbb{R} \times \mathbb{R}^A \times \mathbb{R}^{AS}$ satisfying the KKT conditions of problems~\eqref{eq:rob_value_it:reformulated} and~\eqref{eq:rob_value_it:dual}. Indeed, since only the first term in the objective function in~\eqref{eq:rob_value_it:dual} depends on $\kappa$, the solution $(\bm{\alpha}^\star, \omega^\star, \bm{\gamma}, \bm{\theta}^\star)$ for the dual problem with budget $\kappa$ remains feasible (but is typically not optimal) for the dual problem with budget $\kappa'$, and its objective value decreases by precisely $\omega^\star (\kappa' - \kappa)$. The result now follows from Lemma~\ref{lem:bound_it_like_beckham}.
\end{proof}

\noindent \textbf{Proof of Theorem~\ref{thm:overall_complexity:inexact_subproblem}.} $\;$
	As in the proof of Theorem~\ref{thm:overall_complexity:exact_subproblem}, we consider the equivalent reformulation~\eqref{eq:rob_value_it:reformulated} of problem~\eqref{eq:rob_value_it} and compute each component $v'_s$ of $\bm{v}'$, $s \in \mathcal{S}$, individually. We compute $v'_s$ through a bisection on the optimal value of problem~\eqref{eq:rob_value_it:reformulated}. To this end, we set $\delta = \epsilon \kappa / [2A\overline{R} + A \epsilon]$. We start the bisection with the lower and upper bounds $\underline{v}_{s0} = \underline{R}_s (\bm{v})$ and $\overline{v}_{s0} = \overline{R}$, respectively. Note that the lower bound $\underline{R}_s (\bm{v}) = \max_{a \in \mathcal{A}} \; \min_{s' \in \mathcal{S}} \, \{ r_{sas'} + \lambda v_{s'} \}$ is justified since the projection subproblem~\eqref{eq:gen_projection} is infeasible if $\theta = \beta < \min \{ \bm{b} \} $ which is set to be $\min \{ \bm{r}_{sa} + \lambda \bm{v} \} $ for each $a\in\mathcal{A}$. In each iteration $i = 0, 1, \ldots$, we consider the midpoint $\theta = (\underline{v}_{si} + \overline{v}_{si}) / 2$ and compute the generalized $d_a$-projections $\mathfrak{P} (\overline{\bm{p}}_{sa}; \bm{r}_{sa} + \lambda \bm{v}, \theta)$, $a \in \mathcal{A}$, to $\delta$-accuracy, resulting in the action-wise lower and upper bounds $(\underline{d}_a, \overline{d}_a)$, respectively. We then update the interval bounds as follows:
	\begin{equation*}
	\begin{cases}
	(\underline{v}^{i+1}_s, \, \overline{v}^{i+1}_s) \leftarrow (\underline{v}_{si}, \, \theta) & \displaystyle \text{if } \sum_{a \in \mathcal{A}} \overline{d}_a \leq \kappa, \\
	(\underline{v}^{i+1}_s, \, \overline{v}^{i+1}_s) \leftarrow (\theta, \, \overline{v}_{si}) & \displaystyle \text{if } \sum_{a \in \mathcal{A}} \underline{d}_a > \kappa
	\end{cases}
	\end{equation*}
	We terminate the bisection once \emph{(i)} $\overline{v}_{si} - \underline{v}_{si} \leq \epsilon$ or \emph{(ii)} $\kappa \in \big[ \sum_{a \in \mathcal{A}} \underline{d}_a, \, \sum_{a \in \mathcal{A}} \overline{d}_a \big)$, whichever condition holds first. Note that both interval updates ensure that $\underline{v}^{i+1}_s$ and $\overline{v}^{i+1}_s$ remain valid bounds since
	\begin{equation*}
	\sum_{a \in \mathcal{A}} \overline{d}_a \leq \kappa
	\quad \Longrightarrow \quad
	\sum_{a \in \mathcal{A}} \mathfrak{P} (\overline{\bm{p}}_{sa}; \bm{r}_{sa} + \lambda \bm{v}, \theta) \leq \kappa
	\quad \Longrightarrow \quad
	[\mathfrak{B} (\bm{v})]_s \leq \theta
	\end{equation*}
	as well as
	\begin{equation*}
	\sum_{a \in \mathcal{A}} \underline{d}_a > \kappa
	\quad \Longrightarrow \quad
	\sum_{a \in \mathcal{A}} \mathfrak{P} (\overline{\bm{p}}_{sa}; \bm{r}_{sa} + \lambda \bm{v}, \theta) > \kappa
	\quad \Longrightarrow \quad
	[\mathfrak{B} (\bm{v})]_s > \theta,
	\end{equation*}
	where the respective second implications follow from the proof of Theorem~\ref{thm:overall_complexity:exact_subproblem}. At termination, in case \emph{(i)} we have $\overline{v}_{si} - \underline{v}_{si} \leq \epsilon$, which implies that $\theta = (\underline{v}_{si} + \overline{v}_{si}) / 2$ is an $\epsilon$-optimal solution to $[\mathfrak{B} (\bm{v})]_s$. If case \emph{(ii)} is satisfied at termination, on the other hand, then
	\begin{equation*}
	\sum_{a \in \mathcal{A}} \underline{d}_a \leq \kappa < \sum_{a \in \mathcal{A}} \overline{d}_a
	\quad \Longrightarrow \quad
	\sum_{a \in \mathcal{A}} \mathfrak{P} (\overline{\bm{p}}_{sa}; \bm{r}_{sa} + \lambda \bm{v}, \theta) - A \delta
	\leq \kappa <
	\sum_{a \in \mathcal{A}} \mathfrak{P} (\overline{\bm{p}}_{sa}; \bm{r}_{sa} + \lambda \bm{v}, \theta) + A \delta,
	\end{equation*}
	in which case $\theta = (\underline{v}_{si} + \overline{v}_{si}) / 2$ is an \emph{exact} optimal solution to the variant $[\mathfrak{B} (\bm{v}; \kappa')]_s$ of the robust value iteration~\eqref{eq:rob_value_it} where the budget $\kappa$ in the ambiguity set is replaced with some $\kappa' \in [\kappa - A \delta, \, \kappa + A \delta]$. In this case, we have that
	\begin{equation*}
	\big| \theta - [\mathfrak{B} (\bm{v})]_s \big|
	\;\; \leq \;\;
	[\mathfrak{B} (\bm{v}; \kappa - A \delta)]_s - [\mathfrak{B} (\bm{v}; \kappa + A \delta)]_s
	\;\; \leq \;\;
	\epsilon,
	\end{equation*}
	where the first inequality follows from the monotonicity of $\mathfrak{B} (\bm{v}; \cdot)$ in its second argument, and the second inequality holds because of the following argument. If the constraint $\sum_{a \in \mathcal{A}} d_a (\bm{p}_{sa}, \overline{\bm{p}}_{sa}) \leq \kappa - A \delta$ in problem~\eqref{eq:rob_value_it:reformulated} is not binding at optimality, then $[\mathfrak{B} (\bm{v}; \kappa - A \delta)]_s - [\mathfrak{B} (\bm{v}; \kappa + A \delta)]_s = 0 < \epsilon$. On the other hand, if the constraint $\sum_{a \in \mathcal{A}} d_a (\bm{p}_{sa}, \overline{\bm{p}}_{sa}) \leq \kappa - A \delta$ in problem~\eqref{eq:rob_value_it:reformulated} is binding at optimality, then by applying Lemma~\ref{lem:the_grand_unification_of_everything} in the appendix and using our definition of $\delta$ and the fact that $\Vert \bm{r}_{sa} + \lambda \bm{v} \Vert_{\infty} \leq \overline{R}$, we have
	\begin{equation*}
	[\mathfrak{B} (\bm{v}; \kappa - A \delta)]_s - [\mathfrak{B} (\bm{v}; \kappa + A \delta)]_s
	\leq 
	\frac{\displaystyle 2A\delta \max_{a \in \mathcal{A}} \, \Vert \bm{r}_{sa} + \lambda \bm{v} \Vert_{\infty}}{\kappa - A \delta} \leq \epsilon .
	\end{equation*}

	A similar reasoning as in the proof of Theorem~\ref{thm:overall_complexity:exact_subproblem} shows that at most $\mathcal{O} (\log [\overline{R} / \epsilon])$ iterations of complexity  $\mathcal{O} (A \cdot h (S, \delta))$ are executed in each of the $S$ bisections, which concludes the proof.
\qed

We next prove how Assumption~\ref{assm:2norm:simple_algo} simplifies the design and analysis of Algorithm~\ref{alg:2norm:gamma_form}.

\begin{lem}[Consequences of Assumption~\ref{assm:2norm:simple_algo}]\label{lem:consequences_of_assm_2norm}
    Consider iteration $t$ of Algorithm~\ref{alg:2norm:gamma_form}. Assumption~\ref{assm:2norm:simple_algo}~\emph{(ii)} implies that there is no $s \in \mathcal{S}$ such that $-b_s \alpha + \gamma_t (\alpha) + c_s = 0$ on an interval of positive width. Moreover, Assumption~\ref{assm:2norm:simple_algo}~\emph{(iii)} ensures that there are no $s, s' \in \mathcal{S}$, $s \neq s'$, such that $-b_s \alpha + \gamma_t (\alpha) + c_s = 0$ and $-b_{s'} \alpha + \gamma_t (\alpha) + c_{s'} = 0$ simultaneously at any $\alpha \in \mathbb{R}$.
\end{lem}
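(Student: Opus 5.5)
The plan is to write, within iteration $t$, each component function as an affine function of $\alpha$. Using the definition of $\gamma_t$ from Algorithm~\ref{alg:2norm:gamma_form}, we have
\[
-b_s \alpha + \gamma_t(\alpha) + c_s \;=\; (m_t - b_s)\alpha + v_t + c_s,
\]
where
\[
m_t = \frac{\sum_{s' \in \mathcal{I}_t} a_{s'} b_{s'}}{\sum_{s' \in \mathcal{I}_t} a_{s'}}
\quad\text{and}\quad
v_t = \frac{\rho - \sum_{s' \in \mathcal{I}_t} a_{s'} c_{s'}}{\sum_{s' \in \mathcal{I}_t} a_{s'}}.
\]
Both claims of the lemma then reduce to algebraic non-degeneracy statements about these affine functions. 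Each such statement will be contradicted by one of the conditions in Assumption~\ref{assm:2norm:simple_algo} after multiplying through by the positive quantity $\sum_{s' \in \mathcal{I}_t} a_{s'}$.

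\textbf{First claim.} An affine function vanishes on an interval of positive width only if it vanishes identically, i.e.\ only if $m_t = b_s$ and $v_t + c_s = 0$. Multiplying $v_t + c_s = 0$ by $\sum_{s' \in \mathcal{I}_t} a_{s'}$ gives
\[
\rho + \sum_{s' \in \mathcal{I}_t} a_{s'} (c_s - c_{s'}) = 0.
\]
This is exactly the expression that Assumption~\ref{assm:2norm:simple_algo}~(ii) forbids, with $\mathcal{S}' = \mathcal{I}_t$. So the first claim follows without needing the slope condition $m_t = b_s$ at all.

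\textbf{Second claim.} Suppose $s \neq s'$ both vanish at some $\alpha$. If $m_t = b_s$, the first claim rules this out, since the $s$-component would then be a nonzero constant. Hence the root of the $s$-component is
\[
\alpha = \frac{v_t + c_s}{b_s - m_t},
\]
and the same holds for $s'$. Subtracting the two equations $(m_t - b_s)\alpha + v_t + c_s = 0$ and $(m_t - b_{s'})\alpha + v_t + c_{s'} = 0$ gives $(b_{s'} - b_s)\alpha = c_{s'} - c_s$. Thus $\alpha = \alpha_{s,s'}$, using Assumption~\ref{assm:2norm:simple_algo}~(i) to divide by $b_{s'} - b_s \neq 0$.

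Substituting $\alpha = \alpha_{s,s'}$ back into the $s$-equation and multiplying by $\sum_{u \in \mathcal{I}_t} a_u$ gives
\[
\rho + \sum_{u \in \mathcal{I}_t} a_u \bigl[ (b_u - b_s)\alpha_{s,s'} + c_s - c_u \bigr] = 0.
\]
The key identity is
\[
c_s - c_u = -(b_u - b_s)\,\alpha_{s,u} \qquad (u \neq s),
\]
which follows from the definition $\alpha_{s,u} = (c_s - c_u)/(b_s - b_u)$. The term for $u = s$ vanishes anyway, consistent with reading $\alpha_{s,s}$ as any finite value multiplied by zero. With this identity the display becomes
\[
\rho + \sum_{u \in \mathcal{I}_t} a_u (b_u - b_s)\bigl(\alpha_{s,s'} - \alpha_{s,u}\bigr) = 0,
\]
which contradicts Assumption~\ref{assm:2norm:simple_algo}~(iii) with $\mathcal{S}' = \mathcal{I}_t$ and $t = s'$.

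The main obstacle is this bookkeeping: rewriting $c_s - c_u$ via $\alpha_{s,u}$ so that the expression matches the exact form in~(iii), and treating the degenerate index $u = s$ consistently. The remaining steps are routine substitutions.
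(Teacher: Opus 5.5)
Your proposal is correct and follows the paper's proof essentially step for step. The first claim reduces to the constant term $\rho + \sum_{s' \in \mathcal{I}_t} a_{s'}(c_s - c_{s'})$ having to vanish, which contradicts~(ii); the second pins the common root at $\alpha_{s,s'}$ and rewrites $c_s - c_u = (b_s - b_u)\alpha_{s,u}$ to reach exactly the expression excluded by~(iii). Your explicit use of~(i) to make $\alpha_{s,s'}$ well defined and your handling of the degenerate $u = s$ term are small clarifications that the paper leaves implicit. The preliminary case $m_t = b_s$ is not needed, since subtracting the two equations already forces $\alpha = \alpha_{s,s'}$.
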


\begin{proof}
    In view of the first statement, assume to the contrary that there is an iteration $k \in \mathbb{N}$, bounds $\underline{\alpha}, \overline{\alpha} \in \mathbb{R}$ with $\overline{\alpha} > \underline{\alpha}$, and a state $s \in \mathcal{S}$ such that
    \begin{align*}
        -b_s \alpha + \gamma_k (\alpha) + c_s \;
        = \; 0
        \quad &\Longleftrightarrow \quad
        -b_s \alpha + \frac{\rho + \sum_{s' \in \mathcal{I}_k} a_{s'} (b_{s'} \alpha - c_{s'})}{\sum_{s' \in \mathcal{I}_k} a_{s'}} + c_s \;
        = \; 0 \\
        &\Longleftrightarrow \quad
        \rho + \sum_{s' \in \mathcal{I}_k} a_{s'} ([b_{s'} - b_s] \alpha - c_{s'} + c_s) \;
        = \; 0
    \end{align*}
    for all $\alpha \in [\underline{\alpha}, \overline{\alpha}]$.
    For this affine expression to vanish on an interval of positive width, its constant term must be zero. However, Assumption~\ref{assm:2norm:simple_algo}~\emph{(ii)} implies that $\rho + \sum_{s' \in \mathcal{I}_k} a_{s'} (c_s - c_{s'}) \neq 0$, which is a contradiction.

    As for the second statement, assume to the contrary that there is an iteration $k \in \mathbb{N}$, a value $\alpha \in \mathbb{R}$ and states $s, t \in \mathcal{S}$, $s \neq t$, such that
    \begin{align*}
        -b_s \alpha + \gamma_k (\alpha) + c_s
        \;
        = 0 \; = \;
        -b_t \alpha + \gamma_k (\alpha) + c_t
        \quad &\Longleftrightarrow \quad
        \gamma_k (\alpha) \;
        = \; b_s \alpha - c_s \; = \; b_t \alpha - c_t \\
        &\Longleftrightarrow \quad
        \gamma_k (\alpha_{s,t}) \;
        = \; b_s \alpha_{s,t} - c_s \; = \; b_t \alpha_{s,t} - c_t,
    \end{align*}
    where the second equivalence holds by construction of $\alpha_{s,t}$.
    The first equality on the right-hand side of the last equivalence implies that
    \begin{align*}
        \mspace{-30mu}
        \frac{\rho + \sum_{s' \in \mathcal{I}_k} a_{s'} (b_{s'} \alpha_{s,t} - c_{s'})}{\sum_{s' \in \mathcal{I}_k} a_{s'}} \;
        = \; b_s \alpha_{s,t} - c_s
        \quad &\Longleftrightarrow \quad
        \rho + \sum_{s' \in \mathcal{I}_k} a_{s'} ( (b_{s'}-b_s ) \alpha_{s,t} - c_{s'} + c_s) \;
        = \; 0 \\
        &\Longleftrightarrow \quad
        \rho + \sum_{s' \in \mathcal{I}_k} a_{s'} ( (b_{s'}-b_s ) \alpha_{s,t} + (b_s - b_{s'}) \alpha_{s,s'} ) \;
        = \; 0 \\
        &\Longleftrightarrow \quad
        \rho + \sum_{s' \in \mathcal{I}_k} a_{s'} (b_{s'}-b_s ) ( \alpha_{s,t} - \alpha_{s,s'} ) \;
        = \; 0,
    \end{align*}
    where the second equivalence follows from the definition of $\alpha_{s,s'}$.
    Since the last equality violates Assumption~\ref{assm:2norm:simple_algo}~\emph{(iii)}, we obtain the desired contradiction.
\end{proof}

\section*{Appendix B: Extended Numerical Results}

We report extended numerical results that complement the figures and summary tables from Section~\ref{sec:numericals}. In particular, we provide detailed runtimes for the projection subproblems and the robust Bellman operator.

\begin{table}[t]
\centering
\caption{$\ell_1$- and $\ell_2$-norm projection runtimes (in $\mu$s) on synthetic and benchmark instances. Missing entries correspond to instances where CPLEX encountered numerical issues.}
~\hspace{-2.8cm}
\begin{tabular}{l|rrrr|rrrr}
Instance & L1 Fast & L1 CPLEX & L1 Gurobi & L1 Mosek & L2 Fast & L2 CPLEX & L2 Gurobi & L2 Mosek \\
\hline
synthetic ($S=10$, $A=10$) & 0.43 & 277.59 & 193.15 & 638.59 & 0.54 & 241.63 & 129.99 & 660.29 \\
synthetic ($S=20$, $A=10$) & 0.81 & 319.16 & 271.34 & 739.09 & 1.20 & 261.11 & 161.04 & 691.62 \\
synthetic ($S=30$, $A=10$) & 1.13 & 370.52 & 343.64 & 860.49 & 1.82 & 294.28 & 196.08 & 746.80 \\
synthetic ($S=40$, $A=10$) & 1.29 & 457.92 & 406.99 & 980.02 & 2.71 & 313.52 & 227.15 & 805.21 \\
synthetic ($S=50$, $A=10$) & 1.62 & 481.39 & 507.00 & 1,142.74 & 3.87 & 342.27 & 254.56 & 873.96 \\
synthetic ($S=60$, $A=10$) & 2.11 & 549.95 & 619.12 & 1,243.26 & 4.59 & 361.08 & 291.93 & 934.62 \\
synthetic ($S=70$, $A=10$) & 2.25 & 615.89 & 658.89 & 1,344.07 & 5.86 & 391.20 & 310.00 & 994.19 \\
synthetic ($S=80$, $A=10$) & 2.56 & 680.18 & 760.77 & 1,466.68 & 7.28 & 393.38 & 337.44 & 1,059.37 \\
synthetic ($S=90$, $A=10$) & 3.04 & 731.95 & 882.69 & 1,576.70 & 9.03 & 466.13 & 368.01 & 1,168.46 \\
synthetic ($S=100$, $A=10$) & 3.38 & 818.88 & 912.20 & 1,764.80 & 10.61 & 436.03 & 386.86 & 1,182.20 \\
\hline
synthetic ($S=20$, $A=20$) & 0.81 & 317.89 & 268.33 & 734.18 & 1.17 & 261.40 & 165.46 & 691.39 \\
synthetic ($S=30$, $A=30$) & 1.08 & 380.07 & 336.40 & 860.16 & 1.85 & 292.61 & 217.60 & 741.01 \\
synthetic ($S=40$, $A=40$) & 1.26 & 435.89 & 402.55 & 992.04 & 2.58 & 301.91 & 224.27 & 805.96 \\
synthetic ($S=50$, $A=50$) & 1.51 & 479.39 & 606.59 & 1,118.60 & 3.52 & 329.61 & 272.13 & 873.07 \\
synthetic ($S=60$, $A=60$) & 1.94 & 558.86 & 627.20 & 1,241.78 & 4.55 & 352.32 & 303.07 & 933.11 \\
synthetic ($S=70$, $A=70$) & 2.14 & 602.65 & 685.44 & 1,345.93 & 6.04 & 389.05 & 341.21 & 993.16 \\
synthetic ($S=80$, $A=80$) & 2.49 & 712.91 & 752.92 & 1,459.28 & 7.22 & 394.72 & 336.62 & 1,041.12 \\
synthetic ($S=90$, $A=90$) & 3.07 & 726.02 & 821.62 & 1,573.33 & 8.94 & 433.39 & 397.96 & 1,123.05 \\
synthetic ($S=100$, $A=100$) & 3.30 & 816.77 & 907.96 & 1,717.12 & 10.59 & 435.26 & 386.02 & 1,224.11 \\
\hline
blackjack & 17.76 & -- & 2,084.10 & 2,303.33 & 25.31 & 12,065.50 & 1,140.58 & 2,949.52 \\
capacity50 & 1.66 & 1,929.49 & 423.12 & 917.25 & 2.78 & 1,180.03 & 254.67 & 926.80 \\
chain10 & 0.45 & 425.79 & 189.64 & 634.76 & 0.50 & 362.13 & 145.81 & 661.86 \\
cliffwalking & 0.14 & 994.67 & 316.24 & 429.76 & 0.27 & 960.56 & 239.59 & 680.07 \\
forest50 & 1.67 & 933.68 & 415.18 & 778.30 & 2.46 & 752.32 & 245.11 & 949.91 \\
frozenlake4x4 & 0.57 & 487.42 & 226.56 & 664.48 & 1.09 & 404.72 & 152.72 & 732.47 \\
frozenlake8x8 & 2.25 & 1,880.41 & 510.34 & 896.61 & 4.03 & 1,420.82 & 286.13 & 1,243.89 \\
gridworld25 & 0.89 & 586.27 & 260.46 & 624.14 & 1.20 & 559.33 & 172.06 & 759.22 \\
inventory50 & 0.13 & 1,502.19 & 406.90 & 755.11 & 0.29 & 1,004.22 & 253.52 & 742.31 \\
machine20 & 0.09 & 363.89 & 179.83 & 351.85 & 0.21 & 418.45 & 164.25 & 616.27 \\
openspiel$\underline{~}$grid16 & 0.54 & 498.32 & 208.19 & 586.60 & 1.02 & 408.15 & 146.03 & 742.79 \\
perishable50 & 0.13 & 1,458.00 & 404.76 & 755.58 & 0.29 & 997.68 & 257.23 & 736.45 \\
riverswim20 & 0.72 & 491.31 & 257.31 & 1,015.01 & 1.29 & 409.26 & 163.66 & 678.85 \\
riverswim6 & 0.33 & 348.92 & 165.53 & 729.57 & 0.54 & 319.25 & 133.06 & 615.11 \\
taxi & 32.05 & -- & 2,997.11 & 3,157.03 & 34.49 & -- & 1,441.25 & 3,240.99 \\ \hline \hline
\end{tabular}
\end{table}

\begin{table}[t]
\centering
\caption{KL divergence and Burg entropy projection runtimes (in $\mu$s) on synthetic and benchmark instances.}
\begin{tabular}{l|rr|rr}
Instance & KL Fast & KL Mosek & Burg Fast & Burg Mosek \\
\hline
synthetic ($S=10$, $A=10$) & 5.61 & 1,091.76 & 1.32 & 968.72 \\
synthetic ($S=20$, $A=10$) & 10.71 & 1,485.03 & 2.20 & 1,459.05 \\
synthetic ($S=30$, $A=10$) & 15.85 & 1,942.33 & 3.10 & 1,946.30 \\
synthetic ($S=40$, $A=10$) & 21.14 & 2,448.60 & 4.29 & 2,458.39 \\
synthetic ($S=50$, $A=10$) & 25.76 & 2,835.97 & 5.19 & 2,957.32 \\
synthetic ($S=60$, $A=10$) & 30.77 & 3,289.00 & 5.85 & 3,456.49 \\
synthetic ($S=70$, $A=10$) & 35.93 & 3,849.30 & 6.76 & 4,019.76 \\
synthetic ($S=80$, $A=10$) & 41.11 & 4,246.58 & 7.67 & 4,589.93 \\
synthetic ($S=90$, $A=10$) & 46.00 & 4,844.35 & 8.58 & 5,014.32 \\
synthetic ($S=100$, $A=10$) & 51.20 & 5,194.77 & 9.57 & 5,564.42 \\
\hline
synthetic ($S=20$, $A=20$) & 10.62 & 1,485.60 & 2.20 & 1,474.46 \\
synthetic ($S=30$, $A=30$) & 15.93 & 1,996.88 & 3.21 & 1,937.97 \\
synthetic ($S=40$, $A=40$) & 20.67 & 2,379.92 & 4.20 & 2,458.76 \\
synthetic ($S=50$, $A=50$) & 25.77 & 2,827.25 & 4.94 & 2,946.57 \\
synthetic ($S=60$, $A=60$) & 30.78 & 3,397.15 & 5.85 & 3,458.24 \\
synthetic ($S=70$, $A=70$) & 36.02 & 3,878.38 & 6.75 & 4,014.76 \\
synthetic ($S=80$, $A=80$) & 40.95 & 4,219.85 & 7.66 & 4,624.34 \\
synthetic ($S=90$, $A=90$) & 46.01 & 4,884.22 & 8.58 & 5,022.81 \\
synthetic ($S=100$, $A=100$) & 51.29 & 5,210.92 & 9.60 & 5,713.97 \\
\hline
blackjack & 212.66 & 18,469.40 & 34.15 & 20,674.80 \\
capacity50 & 28.86 & 2,877.01 & 5.15 & 2,890.49 \\
chain10 & 7.66 & 1,086.38 & 1.70 & 969.93 \\
cliffwalking & 1.04 & 2,419.60 & 0.88 & 2,593.19 \\
forest50 & 29.52 & 2,856.43 & 5.16 & 2,794.84 \\
frozenlake4x4 & 10.35 & 1,337.28 & 2.01 & 1,218.13 \\
frozenlake8x8 & 34.05 & 3,551.01 & 6.65 & 3,510.66 \\
gridworld25 & 15.02 & 1,792.84 & 2.65 & 1,620.61 \\
inventory50 & 1.12 & 2,420.09 & 1.09 & 2,490.72 \\
machine20 & 0.42 & 1,417.57 & 0.41 & 1,393.86 \\
openspiel$\underline{~}$grid16 & 10.63 & 1,350.78 & 1.95 & 1,188.22 \\
perishable50 & 0.84 & 2,436.55 & 0.84 & 2,500.77 \\
riverswim20 & 11.57 & 1,513.22 & 2.26 & 1,431.42 \\
riverswim6 & 4.78 & 873.35 & 1.51 & 821.30 \\
taxi & 301.23 & 32,121.90 & 46.41 & 25,453.50 \\ \hline \hline
\end{tabular}
\end{table}

\begin{table}[t]
\caption{$\ell_1$- and $\ell_2$-norm robust Bellman operator runtimes (in ms) on synthetic and benchmark instances.}
~\hspace{-2.5cm}\footnotesize
\begin{tabular}{l|rrrrr|rrrr}
Instance & L1 Fast & L1 CPLEX & L1 Gurobi & L1 Mosek & L1 Homotopy & L2 Fast & L2 CPLEX & L2 Gurobi & L2 Mosek \\
\hline
synthetic ($S=10$, $A=10$) & 0.05 & 0.97 & 1.10 & 3.68 & 0.01 & 0.08 & 4.16 & 2.98 & 2.66 \\
synthetic ($S=20$, $A=10$) & 0.06 & 2.30 & 2.01 & 6.60 & 0.05 & 0.12 & 9.66 & 5.31 & 3.55 \\
synthetic ($S=30$, $A=10$) & 0.09 & 3.57 & 3.15 & 9.63 & 0.17 & 0.20 & 14.65 & 6.30 & 4.34 \\
synthetic ($S=40$, $A=10$) & 0.11 & 5.29 & 4.23 & 12.49 & 0.31 & 0.31 & 17.99 & 8.19 & 5.18 \\
synthetic ($S=50$, $A=10$) & 0.13 & 6.33 & 5.49 & 15.97 & 0.57 & 0.48 & 23.14 & 10.41 & 6.28 \\
synthetic ($S=60$, $A=10$) & 0.16 & 8.42 & 7.58 & 18.85 & 0.97 & 0.75 & 25.29 & 11.50 & 7.22 \\
synthetic ($S=70$, $A=10$) & 0.19 & 10.01 & 8.11 & 22.29 & 1.51 & 1.04 & 30.04 & 12.89 & 7.83 \\
synthetic ($S=80$, $A=10$) & 0.22 & 12.94 & 9.15 & 25.68 & 2.26 & 1.45 & 34.31 & 13.62 & 8.51 \\
synthetic ($S=90$, $A=10$) & 0.26 & 15.05 & 10.82 & 30.59 & 3.17 & 1.98 & 39.48 & 15.64 & 9.53 \\
synthetic ($S=100$, $A=10$) & 0.31 & 17.30 & 12.12 & 32.79 & 4.20 & 2.60 & 44.01 & 16.97 & 10.24 \\
\hline
synthetic ($S=20$, $A=20$) & 0.11 & 4.47 & 3.99 & 13.49 & 0.12 & 0.20 & 17.94 & 11.07 & 6.35 \\
synthetic ($S=30$, $A=30$) & 0.19 & 12.74 & 9.33 & 30.35 & 0.46 & 0.43 & 73.05 & 22.03 & 11.97 \\
synthetic ($S=40$, $A=40$) & 0.28 & 28.85 & 18.60 & 51.60 & 1.24 & 0.75 & 157.17 & 39.69 & 20.15 \\
synthetic ($S=50$, $A=50$) & 0.39 & 46.57 & 32.01 & 85.65 & 2.86 & 1.35 & 346.00 & 61.45 & 33.08 \\
synthetic ($S=60$, $A=60$) & 0.57 & 177.89 & 54.92 & 128.22 & 5.78 & 2.10 & 554.11 & 85.63 & 48.94 \\
synthetic ($S=70$, $A=70$) & 0.77 & 328.20 & 95.22 & 179.85 & 10.85 & 3.59 & 863.58 & 119.87 & 65.95 \\
synthetic ($S=80$, $A=80$) & 0.95 & 112.32 & 153.34 & 235.74 & 18.10 & 5.00 & 1,217.77 & 140.16 & 83.40 \\
synthetic ($S=90$, $A=90$) & 1.24 & 180.93 & 248.09 & 307.14 & 28.72 & 7.64 & 1,803.32 & 186.48 & 108.17 \\
synthetic ($S=100$, $A=100$) & 1.58 & 251.41 & 439.89 & 400.82 & 42.00 & 10.14 & 2,538.84 & 240.46 & 132.77 \\
\hline
blackjack & 0.45 & 7.37 & 4.94 & 17.15 & 0.20 & 1.64 & 23.08 & 10.64 & 7.78 \\
capacity50 & 0.12 & 3.15 & 2.49 & 7.06 & 0.06 & 0.37 & 8.80 & 4.74 & 3.42 \\
chain10 & 0.02 & 0.41 & 0.34 & 1.07 & 0.00 & 0.03 & 1.22 & 0.51 & 1.14 \\
cliffwalking & 0.02 & 1.05 & 1.30 & 4.28 & 0.00 & 0.03 & 3.13 & 1.54 & 1.86 \\
forest50 & 0.05 & 0.75 & 0.87 & 2.75 & 0.00 & 0.07 & 3.80 & 1.77 & 1.87 \\
frozenlake4x4 & 0.04 & 0.76 & 0.67 & 2.22 & 0.01 & 0.06 & 2.78 & 1.21 & 1.83 \\
frozenlake8x8 & 0.12 & 2.77 & 1.82 & 6.46 & 0.02 & 0.20 & 9.46 & 3.97 & 3.70 \\
gridworld25 & 0.04 & 0.71 & 0.81 & 2.76 & 0.00 & 0.05 & 4.04 & 2.08 & 2.08 \\
inventory50 & 0.04 & 1.41 & 1.93 & 6.31 & 0.05 & 0.06 & 6.74 & 3.79 & 3.43 \\
machine20 & 0.01 & 0.43 & 0.38 & 1.30 & 0.00 & 0.01 & 1.13 & 0.47 & 1.15 \\
openspiel$\underline{~}$grid16 & 0.05 & 0.62 & 0.73 & 2.80 & 0.00 & 0.08 & 4.45 & 2.18 & 1.95 \\
perishable50 & 0.04 & 1.43 & 1.92 & 5.97 & 0.04 & 0.06 & 7.00 & 4.02 & 3.10 \\
riverswim20 & 0.03 & 0.46 & 0.46 & 1.46 & 0.00 & 0.03 & 2.46 & 0.70 & 1.26 \\
riverswim6 & 0.01 & 0.34 & 0.26 & 0.89 & 0.00 & 0.03 & 0.85 & 0.39 & 1.01 \\
taxi & 1.09 & 16.51 & 19.09 & 62.81 & 0.05 & 1.77 & 140.36 & 60.95 & 35.32 \\ \hline \hline
\end{tabular}
\end{table}

\begin{table}[t]
\centering
\caption{KL divergence and Burg entropy robust Bellman operator runtimes (in ms) on synthetic and benchmark instances.}
\begin{tabular}{l|rr|rr}
Instance & KL Fast & KL Mosek & Burg Fast & Burg Mosek \\
\hline
synthetic ($S=10$, $A=10$) & 0.25 & 10.08 & 0.10 & 7.08 \\
synthetic ($S=20$, $A=10$) & 0.43 & 19.11 & 0.18 & 14.61 \\
synthetic ($S=30$, $A=10$) & 0.68 & 27.79 & 0.25 & 20.53 \\
synthetic ($S=40$, $A=10$) & 0.84 & 37.48 & 0.32 & 27.21 \\
synthetic ($S=50$, $A=10$) & 1.06 & 45.64 & 0.39 & 38.46 \\
synthetic ($S=60$, $A=10$) & 1.56 & 58.21 & 0.53 & 46.54 \\
synthetic ($S=70$, $A=10$) & 1.56 & 70.63 & 0.57 & 56.58 \\
synthetic ($S=80$, $A=10$) & 1.80 & 76.71 & 0.62 & 62.12 \\
synthetic ($S=90$, $A=10$) & 2.77 & 93.38 & 0.78 & 70.33 \\
synthetic ($S=100$, $A=10$) & 2.96 & 96.83 & 0.88 & 78.24 \\
\hline
synthetic ($S=20$, $A=20$) & 0.62 & 40.39 & 0.31 & 30.34 \\
synthetic ($S=30$, $A=30$) & 1.36 & 99.82 & 0.65 & 77.02 \\
synthetic ($S=40$, $A=40$) & 2.02 & 181.45 & 1.03 & 145.24 \\
synthetic ($S=50$, $A=50$) & 2.99 & 369.29 & 1.60 & 302.96 \\
synthetic ($S=60$, $A=60$) & 4.19 & 577.15 & 2.21 & 649.81 \\
synthetic ($S=70$, $A=70$) & 5.80 & 937.38 & 3.04 & 875.56 \\
synthetic ($S=80$, $A=80$) & 7.27 & 1,417.59 & 3.81 & 1,330.96 \\
synthetic ($S=90$, $A=90$) & 9.58 & 2,530.39 & 4.79 & 2,248.03 \\
synthetic ($S=100$, $A=100$) & 10.83 & 3,245.97 & 5.99 & 3,593.97 \\
\hline
blackjack & 4.67 & 58.52 & 1.02 & 70.96 \\
capacity50 & 1.09 & 24.95 & 0.30 & 17.13 \\
chain10 & 0.14 & 2.10 & 0.04 & 1.97 \\
cliffwalking & 1.58 & 12.00 & 0.32 & 9.12 \\
forest50 & 0.63 & 7.11 & 0.15 & 6.95 \\
frozenlake4x4 & 0.31 & 5.34 & 0.09 & 4.51 \\
frozenlake8x8 & 1.14 & 24.01 & 0.30 & 17.05 \\
gridworld25 & 0.37 & 10.45 & 0.12 & 7.13 \\
inventory50 & 0.56 & 21.38 & 0.17 & 12.58 \\
machine20 & 0.17 & 2.51 & 0.05 & 2.42 \\
openspiel$\underline{~}$grid16 & 0.35 & 8.17 & 0.12 & 6.39 \\
perishable50 & 0.58 & 19.86 & 0.17 & 12.99 \\
riverswim20 & 0.27 & 3.15 & 0.07 & 3.20 \\
riverswim6 & 0.09 & 1.41 & 0.03 & 1.33 \\
taxi & 8.89 & 379.12 & 2.75 & 383.60 \\ \hline \hline
\end{tabular}
\end{table}

\end{document}